\documentclass[11pt]{article}
\usepackage[T1]{fontenc}
\usepackage[margin=1in]{geometry}
\usepackage{biolinum}
\usepackage{booktabs}
\usepackage{libertineMono}
\usepackage[utf8]{inputenc}
\usepackage{amsfonts,amsmath,amssymb,amsthm}
\usepackage{mathtools}
\usepackage{array}
\usepackage{pdflscape}
\usepackage{cite}
\usepackage{microtype}
\usepackage[colorlinks=true,linkcolor=blue,anchorcolor=blue,citecolor=red,urlcolor=magenta]{hyperref}
\usepackage{tikz}
\usetikzlibrary{decorations.pathreplacing}

\newtheorem{theorem}{Theorem}[section]
\newtheorem{lemma}[theorem]{Lemma}
\newtheorem{corollary}[theorem]{Corollary}
\newtheorem{conjecture}[theorem]{Conjecture}
\newtheorem{proposition}[theorem]{Proposition}
\newtheorem{definition}[theorem]{Definition}
\newtheorem{example}[theorem]{Example}

\newcommand{\diag}{\operatorname{diag}}

\newcommand{\one}{\mathbf{1}}
\newcommand{\0}{\mathbf{0}}

\newcommand\range{\mathop{\rm range}\nolimits}
\newcommand{\spread}{\operatorname{sp}}

\newcommand{\supp}{\operatorname{supp}}
\newcommand{\tr}{\operatorname{Tr}}
\newcommand{\Gc}{\overline G}

\def\R{\mathbb{R}}

\def\b{\mathbf{b}}
\def\d{\mathbf{d}}
\def\e{\mathbf{e}}
\def\m{\mathbf{m}}
\def\p{\mathbf{p}}
\def\vv{\mathbf{v}}
\def\w{\mathbf{w}}
\def\x{\mathbf{x}}
\def\y{\mathbf{y}}
\def\z{\mathbf{z}}

\title{The Laplacian $S_{n,n}$ conjecture is true}
\newcommand{\authemail}[1]{{\footnotesize\ttfamily #1}}
\author{Nathaniel Johnston\thanks{\scriptsize Department of Mathematics \& Computer Science, Mount Allison University. \authemail{njohnston@mta.ca}}}

\date{September 22, 2026}

\begin{document}

\maketitle

\begin{abstract}
    The ``$S_{n,n}$ conjecture'' asserts that there does not exist a simple graph on $n$ vertices with Laplacian spectrum $\{0,1,2,\ldots,n-1\}$ for any integer $n \geq 2$. This conjecture has already been proved when $2 \leq n \leq 15$ and when $n \geq 6,649,688,933$. We prove all remaining cases and thus establish that the conjecture is true.
\end{abstract}

\section{Introduction}\label{sec:intro}

A graph is \emph{Laplacian integral} if every eigenvalue of its Laplacian matrix is an integer. Laplacian integral graphs have received significant interest for several decades (see \cite{GMS90,GM94,Kir07} and the references therein, for example). It was asked in \cite{FKMN05} which Laplacian integral graphs have distinct Laplacian eigenvalues. Since every graph on $n$ vertices has its Laplacian eigenvalues in $[0,n]$, if a Laplacian integral graph has distinct eigenvalues, then its spectrum must consist of all members of $\{0,1,\ldots,n\}$ except for one.

Existence of such a graph when the missing eigenvalue is one of $0$, $1$, $2$, $\ldots$, $n-1$ was completely answered in \cite{FKMN05}. However, the case of $n$ being the missing eigenvalue (i.e., the question of whether or not there exists a graph with Laplacian spectrum $S_{n,n} := \{0,1,2,\ldots,n-1\}$) was left open, and it was conjectured that no such graph exists:

\begin{conjecture}[$S_{n,n}$ conjecture~\cite{FKMN05}]\label{conj:Snn}
    Suppose $n \geq 2$ is an integer. Then there does not exist a simple graph on $n$ vertices with Laplacian spectrum $S_{n,n}$.
\end{conjecture}

This conjecture has already been proved for many different graph orders. It was proved in \cite{FKMN05} when $n \leq 11$, when $n$ is prime, and when $n \equiv 2 \pmod{4}$ or $n \equiv 3 \pmod{4}$. It was proved in \cite{GN13} when $n \geq 6,649,688,933$, and it was proved in \cite{JPV26} when $n = 12$. Our main contribution is to prove the conjecture for all remaining values of $n$.

\subsection{Proof overview}\label{sec:proof_overview}

Our proof of Conjecture~\ref{conj:Snn} proceeds in two main steps: one that handles graphs of order $n \leq 31$ (Section~\ref{part:low_n}), and one that handles graphs of order $n \geq 32$ (Section~\ref{part:high_n}). These two steps use very different techniques and can be read completely independently from each other.

The proof for graphs of order $n \leq 31$ is very computational. It develops a family of linear systems in nonnegative variables that describe necessary conditions for existence of a counterexample to Conjecture~\ref{conj:Snn}. By providing roughly 10,000 infeasibility certificates (i.e., linear programming duality certificates), we can computationally verify the conjecture in this case. A more in-depth overview of the proof of the $n \leq 31$ case is provided in Section~\ref{sec:low_n_computational}.

The proof for graphs of order $n \geq 32$ is much more mathematically involved. We define a family of graphs that we call \emph{eigensimple} in Definition~\ref{defn:eigensimple}, which straightforwardly contains every counterexample to the $S_{n,n}$ conjecture (if any exist). We show that eigensimple graphs on $n \geq 32$ vertices don't exist by developing a series of (quite technical) inequalities that they must satisfy, and then presenting sum-of-squares certificates that show that at least one of them must fail. A much more in-depth overview of this case is provided in Section~\ref{sec:high_n_overview}.

We note that both proofs have some ``wiggle room''. While we only applied the proof technique from Section~\ref{part:low_n} to $n \leq 31$, it could conceivably also be used to prove the conjecture when $n = 32$, albeit at a greatly increased computational cost. Similarly, the inequalities that drive the proof technique from Section~\ref{part:high_n} could likely be strengthened (at the expense of being significantly uglier and harder to work with) to prove the conjecture for some smaller values of $n$. We chose $n = 32$ as the breakpoint between our methods to try to balance the computational requirements of the proof of Section~\ref{part:low_n} and the general theoretical complexity of the proof of Section~\ref{part:high_n}.

\subsection{Consequences of our results}\label{sec:consequences}

Since we prove nonexistence of a wider class of graphs than just graphs with Laplacian spectrum $S_{n,n}$ when $n \geq 32$, our results also have consequences for some other open questions about Laplacian integral graphs. For example, following the notation of~\cite{HT23,HT25}, for integers $1 \leq i < j \leq n$ and $m \in \{1,2,\ldots,n\} \setminus \{i,j\}$ we define the multiset
\begin{align*}
    S_{\{i,j\}_{n}^{m}} := \{0,1,\ldots,m-1,m,m,m+1,\ldots,n-1,n\} \setminus \{i,j\}.
\end{align*}

It was conjectured in \cite[Conjecture~6.4]{HT23} that, other than some examples of orders $6$ and $8$, there does not exist a graph with Laplacian spectrum of the form $S_{\{i,n\}_{n}^{m}}$. Two more graphs (of order $9$) with Laplacian spectra of this form were found in \cite{JPV26}, falsifying the conjecture. However, our results (in particular, the upcoming Theorem~\ref{thm:no_large_eigensimple}) immediately prove the conjecture in the case when $2 \leq i,m \leq n-2$, $i \neq m$, and $n \geq 32$. Consequently, any further counterexamples to the conjecture must have $i \in \{1,n-1\}$, $m\in\{1,n-1\}$, or $n \leq 31$.

Finally, our results also establish several conjectures that are known to follow from the $S_{n,n}$ conjecture. Some examples include \cite[Conjecture~5.4]{HKT22}, the $m = n$ half of \cite[Conjecture~6.5]{HT23}, and the fact that for each $1 \leq i < n$ there exists at most one order-$n$ graph (up to isomorphism) with Laplacian spectrum $\{0,1,\ldots,n\} \setminus \{i\}$ \cite{FKMN05}.

One natural new open problem arises from our work as well: which graphs are eigensimple? Theorem~\ref{thm:no_large_eigensimple} immediately implies that there are only finitely many of them, since they all have $31$ or fewer vertices, but it does not tell us what they all are.

\section{Preliminaries}\label{sec:preliminaries}

We start by introducing some notation and recalling standard results concerning Laplacian integral graphs.

\subsection{Notation and terminology}\label{sec:notation}

All graphs that we consider are finite, simple, and unweighted. The complement of a graph $G$ is denoted by $\Gc$. Write $V(G)$ and $E(G)$ for the vertex set and edge set of $G$, respectively, and let $n := |V(G)|$ denote the order of $G$. For a vertex $v \in V(G)$, write $N_G(v)$ and $d_G(v)$ for the neighborhood of $v$ and degree of $v$, respectively. We use $G[S]$ to denote the subgraph of $G$ induced by $S \subseteq V(G)$. A \emph{cut vertex} of a connected graph is a vertex whose deletion would result in that graph becoming disconnected. A \emph{dominating set} is a set $S \subseteq V(G)$ such that every vertex outside $S$ has a neighbor in $S$.

The \emph{Laplacian matrix} $L(G)$ is the square matrix with rows and columns indexed by $V(G)$, with $[L(G)]_{v,v} = d_G(v)$ for all $v \in V(G)$ and $[L(G)]_{v,w} = -1$ for all $\{v,w\} \in E(G)$ (and $[L(G)]_{v,w} = 0$ otherwise). The \emph{Laplacian eigenvalues} of $G$ are the eigenvalues of $L(G)$, which we list in nondecreasing order, with multiplicity, as $0 = \lambda_1(G) \leq \lambda_2(G) \leq \cdots \leq \lambda_n(G)$ (we note that it is straightforward to show that $L(G)$ is symmetric and diagonally dominant, so it is positive semidefinite). If the graph $G$ is clear from context, we use the abbreviations
\begin{align*}
    L & := L(G), & N(v) & := N_G(v), & d(v) & := d_G(v),\\
    \overline{L} & := L(\Gc), & \overline{N}(v) & := N_{\Gc}(v), & \overline{d}(v) & := d_{\Gc}(v).
\end{align*}

We define a \emph{$1$-eigenvector} of a graph $G$ to be an eigenvector of $L(G)$ corresponding to the eigenvalue $1$. A \emph{unit vector} $\x$ is a vector with Euclidean norm $\|\x\| = 1$. All vectors that we use should be interpreted as column vectors when their shape is relevant, and they are denoted by lowercase bold letters like $\x$ and $\y$. Their coordinates are denoted by the corresponding unbolded letters with subscripts like $x_v$ for the entry of $\x$ indexed by $v$. We denote the all-ones vector by $\one$, the standard basis vector corresponding to index $v$ by $\e_v$, the identity matrix by $I$, and the all-ones matrix by $J := \one\one^T$ (their dimensions will always be clear from context, and are typically equal to $n$).

For a $1$-eigenvector $\x$ of $G$, the equation $L(G)\x = \x$ at a vertex $v \in V(G)$ can be written as
\begin{align}\label{eq:eigenvector_equation}
    x_v = d_G(v)x_v-\sum_{w \in N_G(v)}x_w = \sum_{w \in N_G(v)}(x_v-x_w).
\end{align}
We refer to this as the \emph{eigenvector equation} of $\x$ at $v$. The positive and negative parts of $\x$ are defined coordinatewise by
\begin{align*}
    \x_{+} & := \max\{\x,\0\}, & \x_{-} & := \max\{-\x,\0\}, & \x & = \x_{+} - \x_{-}.
\end{align*}
We write
\begin{align*}
    \max(\x) & := \max_v\{x_v\}, & \min(\x) & := \min_v\{x_v\}, & \range(\x) & := \max(\x) - \min(\x).
\end{align*}
If $\x$ has both positive and negative entries, we define
\begin{align*}
    \spread(\x) := \frac{\min\{\max(\x),-\min(\x)\}}{\range(\x)}
\end{align*}
(the notation ``$\spread(\x)$'' refers to the fact that this quantity gives us information about how spread out the entries of $\x$ are). Notice that $0 < \spread(\x) \leq 1/2$ and $\spread(-\x) = \spread(\x)$. The magnitudes of the two extreme entries of $\x$ are $\spread(\x)\range(\x)$ and $(1-\spread(\x))\range(\x)$, in some order.

We denote the set of indices of nonzero entries of $\x$ by $\supp(\x)$, the restriction of $\x$ to an index set $S$ by $\x_S$, and the diagonal matrix with diagonal entries from $\x$ (in the same order) by $\diag(\x)$. All inequalities involving matrices or vectors are meant entrywise.

\subsection{Some standard facts about Laplacian integral graphs}\label{sec:standard_facts}

Here we list some standard and well-known facts about Laplacian matrices and Laplacian integral graphs (i.e., graphs $G$ for which the eigenvalues of $L(G)$ are all integers) that we will use repeatedly, and often without specific citation.

Every Laplacian matrix is positive semidefinite and has all row and column sums equal to $0$. The multiplicity of its eigenvalue $0$ is the number of connected components of $G$. The vector $\one$ is an eigenvector corresponding to the eigenvalue $0$ of $L(G)$, so every eigenvector $\x$ corresponding to a nonzero eigenvalue of $L(G)$ satisfies $\x \perp \one$. The Laplacian of a complement of an order $n$ graph satisfies
\begin{align*}
    L(\Gc) = nI - J - L(G).
\end{align*}
In particular, if $G$ has spectrum $S_{n,n}$ then so does $\Gc$, and in this case both $G$ and $\Gc$ are connected.

The following lemma provides some known restrictions on what a graph with spectrum $S_{n,n}$ would have to look like; proofs of these facts can all be found in \cite{FKMN05}.

\begin{lemma}\label{lem:Snn_basic_conditions}
    Let $n \geq 2$ be an integer and let $G$ be a simple graph on $n$ vertices with Laplacian spectrum $S_{n,n}$. Then:
    \begin{itemize}
        \item[(a)] $2 \leq d(v) \leq n-3$ for all $v \in V(G)$.
        
        \item[(b)] $n \equiv 0 \pmod{4}$ or $n \equiv 1 \pmod{4}$.
        
        \item[(c)] $n$ is not prime.
    \end{itemize}
\end{lemma}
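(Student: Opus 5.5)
The three parts are essentially independent, so I would prove them in the order (b), (c), (a). Parts (b) and (c) should follow from two spectral invariants, the trace and the Matrix-Tree theorem; part (a) needs a more structural argument.

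For (b), I would compare traces. We have $\tr L(G) = \sum_v d(v) = 2|E(G)|$, and this must equal $\sum_{k=0}^{n-1} k = n(n-1)/2$. Hence $|E(G)| = n(n-1)/4$, so $4 \mid n(n-1)$. Since $n$ and $n-1$ are coprime and exactly one of them is even, that one must be divisible by $4$. This gives $n \equiv 0$ or $1 \pmod 4$.

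For (c), I would use the Matrix-Tree theorem in its spectral form: the number of spanning trees satisfies $\tau(G) = \frac1n\prod_{k\ge 2}\lambda_k(G) = (n-1)!/n$. Since $\tau(G)$ is an integer, $n \mid (n-1)!$. If $n$ is prime, $n$ divides none of $1,\dots,n-1$, so $n \nmid (n-1)!$ (equivalently, by Wilson's theorem $(n-1)! \equiv -1 \pmod n$). This is a contradiction. The small case $n=2$ is also ruled out directly, since $K_2$ has spectrum $\{0,2\}$ and $\overline{K_2}$ has spectrum $\{0,0\}$.

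For (a), I would use the complement symmetry: $\Gc$ also has spectrum $S_{n,n}$ and $\overline d(v) = n-1-d(v)$. The bound $d(v)\le n-3$ for $G$ is therefore the bound $\overline d(v)\ge 2$ for $\Gc$, so it suffices to prove $\delta \ge 2$ for every graph with spectrum $S_{n,n}$.
\begin{itemize}
\item \emph{$\delta \ge 1$.} I would use $\lambda_n(H) \ge \Delta(H)+1$ for any graph $H$ with an edge. With $\lambda_n = n-1$ this gives $\Delta \le n-2$ for both $G$ and $\Gc$, hence $\delta(G) \ge 1$.
\item \emph{No pendant vertex.} This is the main obstacle. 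Suppose $v$ has degree $1$ with unique neighbor $u$. For a $\lambda$-eigenvector $\x$, the eigenvector equation at $v$ reads $\lambda x_v = x_v - x_u$, so $x_u = (1-\lambda)x_v$ for every eigenvalue $\lambda$. In particular every $1$-eigenvector vanishes at $u$.
\end{itemize}

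To exclude the pendant vertex, I would first try a divisibility argument. Deleting a pendant vertex preserves the spanning-tree count, so $\tau(G-v) = (n-1)!/n$. The Matrix-Tree theorem on $G-v$ gives $(n-1)\tau(G-v) = \prod_{k\ge2}\lambda_k(G-v)$. By Cauchy interlacing for the principal submatrix $L(G)$ with row and column $v$ deleted, together with the rank-one perturbation at $u$, the spectrum of $G-v$ is squeezed between consecutive integers of $S_{n,n}$. I would then write the pendant-vertex recurrence for the characteristic polynomial,
\begin{align*}
\varphi_G(x) = (x-1)\,\varphi_{G-v}(x) - \varphi_{L_u}(x),
\end{align*}
where $L_u$ is $L(G-v)$ with row and column $u$ removed. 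I would evaluate this at $x = 0, 1, \ldots, n-1$, where $\varphi_G$ vanishes, to obtain integrality and sign constraints. The aim is to show that these, combined with the interlacing bounds, are inconsistent. If this approach stalls, the fallback is to use the vanishing $x_u=0$ of the simple $1$-eigenvector together with the analogous relation for the $(n-1)$-eigenvector, and to derive a contradiction from $\x\perp\one$. Once the pendant case is excluded, complementation yields both inequalities in (a).
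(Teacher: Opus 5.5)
Your arguments for (b) and (c) are correct and are the standard ones: the edge count $|E(G)|=n(n-1)/4$, and $\tau(G)=(n-1)!/n$ from the Matrix-Tree theorem. Your reduction of (a) to $\delta(G)\geq 2$ by complementation is also correct, as are the steps $\delta\geq 1$ and $\Delta\leq n-2$.

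The gap is the pendant-vertex case, which is the real content of (a). You never prove it; you list tools and state an aim, and nothing in those tools points to a contradiction:
\begin{itemize}
\item Since $\tau(G-v)=\tau(G)$, integrality of $\tau(G-v)$ gives nothing beyond $n\mid(n-1)!$.
\item The eigenvalues of $G-v$ need not be integers. Interlacing ($L(G-v)$ is a principal submatrix of $L(G)$ minus $\mathbf{e}_u\mathbf{e}_u^T$) only gives $\lambda_k(G-v)\leq k$. Hence $\prod_{k\geq 2}\lambda_k(G-v)\leq (n-1)!$, which is consistent with the required value $(n-1)(n-1)!/n$.
\item Your recurrence is misstated. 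Expanding $\det(xI-L(G))$ along row $v$ gives $\varphi_G(x)=(x-1)\varphi_{G-v}(x)-x\,\varphi_{L_u}(x)$; check it on $P_3$.
\item The fallback (using $x_u=0$, the analogous relation for the $(n-1)$-eigenvector, and $\x\perp\one$) is not yet an argument either.
\end{itemize}

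The missing idea is to play $\lambda_2(G)=1$ against the cut vertex that a pendant vertex creates. For $n\geq 3$, the neighbor $u$ of a pendant vertex $v$ is a cut vertex.

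The paper defers to \cite{FKMN05} for this lemma, but proves the same degree bound for eigensimple graphs in Lemma~\ref{lem:eigensimple_properties}(b),(c). There, a cut vertex together with $\lambda_2(G)=1$ makes the algebraic and vertex connectivity both equal $1$. The characterization in \cite{KMNS02} then forces $G$ to be a join, so $\Gc$ would be disconnected, which is impossible.

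You can avoid that citation with a test vector. Set $z_v=n-2$, $z_u=0$, and $z_w=-1$ for the remaining $n-2$ vertices. Then $\z\perp\one$, and Courant--Fischer gives
\[
1=\lambda_2(G)\leq\frac{\z^TL\z}{\z^T\z}=\frac{(n-2)^2+d(u)-1}{(n-2)^2+(n-2)}.
\]
This forces $d(u)=n-1$, contradicting the bound $\Delta(G)\leq n-2$ you already derived. Inserting this closes (a).
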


The following lemma tells us about the sums of powers of Laplacian eigenvalues of $G$ and degrees of vertices in $G$, respectively. These quantities are sometimes called the \emph{Laplacian moments} and \emph{degree power sums} of $G$, respectively; see \cite{PZJP10}, where they are normalized by dividing by $n$. We note that all of these formulas are trivial to prove (and again, most of them are stated in \cite{FKMN05} and other references on this problem).

\begin{lemma}\label{lem:Snn_laplacian_moments}
    Let $n \geq 2$ be an integer and let $G$ be a simple graph on $n$ vertices with Laplacian spectrum $S_{n,n}$. Then:
    \begin{itemize}
        \item[(a)] $\displaystyle \tr\big(L(G)\big) = n(n-1)/2$.
        
        \item[(b)] $\displaystyle \tr\big(L(G)^2\big) = n(n-1)(2n-1)/6$.
              
        \item[(c)] $\displaystyle \sum_{v \in V(G)} d(v) = \frac{n(n-1)}{2}$.
        
        \item[(d)] $\displaystyle \sum_{v \in V(G)} d(v)^2 = \frac{n(n-1)(n-2)}{3}$.
    \end{itemize}
\end{lemma}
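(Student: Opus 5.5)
The plan is to compute the first two spectral moments directly from the spectrum and then translate them into degree statements using the entries of $L = L(G)$.

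For (a) and (b), the trace of a matrix power is the sum of the corresponding powers of its eigenvalues. Since $L$ is symmetric with spectrum $S_{n,n} = \{0,1,\ldots,n-1\}$, we get
\begin{align*}
    \tr(L) &= \sum_{k=0}^{n-1} k = \frac{n(n-1)}{2}, &
    \tr(L^2) &= \sum_{k=0}^{n-1} k^2 = \frac{(n-1)n(2n-1)}{6}.
\end{align*}

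For (c), the diagonal entries of $L$ are the degrees, so $\sum_v d(v) = \tr(L)$, and (a) gives the claim.

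For (d), we expand $\tr(L^2) = \sum_{v,w} L_{v,w}^2$, since $L$ is symmetric. The diagonal entries contribute $\sum_v d(v)^2$. The off-diagonal entries are $-1$ exactly on ordered pairs of adjacent vertices, so they contribute $2|E(G)| = \sum_v d(v)$. Hence
\begin{align*}
    \sum_v d(v)^2 = \tr(L^2) - \sum_v d(v) = \frac{n(n-1)(2n-1)}{6} - \frac{n(n-1)}{2} = \frac{n(n-1)(2n-4)}{6} = \frac{n(n-1)(n-2)}{3}.
\end{align*}

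None of these steps is a real obstacle. The only place needing care is (d): one must remember that the off-diagonal $-1$ entries contribute $\sum_v d(v)$ to $\tr(L^2)$, and subtract that term before simplifying.
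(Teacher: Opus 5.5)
Your proof is correct and is the routine computation the paper has in mind when it calls these formulas trivial. Parts (a) and (b) are the first two power sums of the spectrum. Parts (c) and (d) come from reading $\tr(L)$ and $\tr(L^2)$ off the entries of $L$, which is the same identity $[L(G)^2]_{v,v} = d(v)^2 + d(v)$ the paper records in Lemma~\ref{lem:laplacian_local_moments}(a).
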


\section{Part 1: Graphs of low order}\label{part:low_n}

This section is devoted to the proof that there does not exist a graph of order $2 \leq n \leq 31$ that has Laplacian spectrum equal to $S_{n,n}$. Since this is already known when $2 \leq n \leq 12$ \cite{JPV26}, when $n$ is prime, and when $n$ is congruent to $2$ or $3$ mod $4$ \cite{FKMN05}, it suffices to prove the claim for $n \in \{16,20,21,24,25,28\}$.

\subsection{An overview of the low-order proof}\label{sec:low_n_computational}

While our method is computational, it is very much not by na\"ive brute-force, which is completely infeasible for graphs of order as large as $28$ (e.g., recently all Laplacian integral graphs were computed for $n \leq 13$ \cite{JPV26}, and it seems unlikely that this enumeration will be pushed significantly higher any time soon).

Instead, our search is based on considering the degree sequence that a graph with Laplacian spectrum $S_{n,n}$ must have (an idea that was already explored, though less thoroughly, in \cite{FKMN05}). More specifically, our overall approach consists of the following steps:

\begin{itemize}
  \item[(1)] Derive necessary conditions on the degree sequence of a graph $G$ with Laplacian spectrum $S_{n,n}$.
  
  \item[(2)] For each degree sequence that is not ruled out by these necessary conditions, build a linear system in nonnegative variables that encodes the diagonal entries of the first four powers of $L(G)$ (we will call these quantities the \emph{local Laplacian moments} of $G$).
  
  \item[(3)] Prove that each of these linear systems is infeasible via linear programming duality.
\end{itemize}

We emphasize that our verification is exact. For example, there are no numerical eigenvalue computations. Numerical linear programming was used to \emph{discover} the dual certificates in step~(3) above, but they are then \emph{verified} exactly via arbitrary-precision integer arithmetic.

\subsection{Local Laplacian moments}\label{sec:local_moments}

Thanks to Lemma~\ref{lem:Snn_basic_conditions}(a), our search for an order-$n$ graph $G$ with Laplacian spectrum $S_{n,n}$ can search over degree sequences $(d_1, d_2, \ldots, d_n)$ satisfying
\[
    2 \leq d_1 \leq d_2 \leq \cdots \leq d_n \leq n-3
\]
and the equations from parts (c) and (d) of Lemma~\ref{lem:Snn_laplacian_moments}. Furthermore, complementation sends this degree sequence to $(n-1-d_n, n-1-d_{n-1}, \ldots, n-1-d_1)$, and since $G$ has Laplacian spectrum $S_{n,n}$ if and only if $\Gc$ does, it is enough to consider just one representative from each of these complementary pairs of degree sequences.

The observations that we have made so far constrain the degrees of the vertices in $G$, but they say nothing about \emph{which} vertices of given degrees are adjacent. By evaluating the diagonal entries of powers of $L(G)$ individually (rather than summing them as in parts~(a) and~(b) of Lemma~\ref{lem:Snn_laplacian_moments}), we can get ``local'' information that lets us bound the degrees of adjacent vertices. The corresponding quantities for the adjacency matrix $A(G)$ have been investigated under the name \emph{local spectral moments} \cite{REG07}. We similarly call these diagonal entries of powers of the Laplacian matrix $[L(G)^k]_{v,v}$ the \emph{local Laplacian moments} of $G$ at $v$.

In order to compute entries of powers of $L(G)$, we first need some helper variables that describe the adjacencies of $G$. For distinct vertices $v,w \in V(G)$, we define
\begin{align}\label{eq:na_defn}
    n_{v,w} := |N(v) \cap N(w)| \qquad \text{and} \qquad a_{v,w} := \begin{cases}
        1 & \text{if } \{v,w\} \in E(G),\\
        0 & \text{otherwise},
    \end{cases}
\end{align}
(we note that the notation $a_{v,w}$ refers to the fact that it is the $(v,w)$-entry of the adjacency matrix $A(G)$).

\begin{lemma}\label{lem:laplacian_local_moments}
    Let $n \geq 2$ be an integer and let $G$ be a simple graph on $n$ vertices with Laplacian spectrum $S_{n,n}$. Let $v,w \in V(G)$ be distinct, and let $n_{v,w}$ and $a_{v,w}$ be as in Equation~\eqref{eq:na_defn}. Then:
    \begin{itemize}
        \item[(a)] $\displaystyle \big[L(G)^2\big]_{v,v} = d(v)^2 + d(v)$.
        
        \item[(b)] $\displaystyle \big[L(G)^3\big]_{v,v} = d(v)^3 + 2d(v)^2 + \sum_{x \neq v}a_{v,x}\big(d(x)-n_{v,x}\big)$.
        
        \item[(c)] $\displaystyle \big[L(G)^4\big]_{v,v} = \big(d(v)^2+d(v)\big)^2 + \sum_{x \neq v}\big(n_{v,x}-\big(d(v)+d(x)\big)a_{v,x}\big)^2$.
        
        \item[(d)] $\displaystyle \sum_{x \neq v}n_{v,x} = \sum_{x \in N(v)}\big(d(x)-1\big)$.
    \end{itemize}
\end{lemma}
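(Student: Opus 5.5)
All four identities hold for every simple graph on $n$ vertices; the spectral hypothesis is never used. The plan is to compute entries of $L := L(G) = D - A$ directly, where $D := \diag(d(v))$ and $A := A(G)$. The one intermediate object I need is the off-diagonal entry of $L^2$, so I compute that first. Because $L$ is symmetric, $[L^{2k}]_{v,v} = \sum_x [L^k]_{v,x}^2$ and $[L^3]_{v,v} = \sum_x [L^2]_{v,x} L_{x,v}$. So every part reduces to knowing the row of $L^2$ indexed by $v$.

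\emph{Step 1: the row of $L^2$ at $v$.} For the diagonal entry,
\[
  [L^2]_{v,v} = \sum_x L_{v,x}^2 = d(v)^2 + d(v),
\]
since row $v$ has diagonal entry $d(v)$ and exactly $d(v)$ off-diagonal entries equal to $-1$. This is part (a). For $x \neq v$, split $\sum_y L_{v,y}L_{y,x}$ into the terms $y=v$, $y=x$, and $y \notin \{v,x\}$. The first two give
\[
  d(v)(-a_{v,x}) + (-a_{v,x})d(x).
\]
In the remaining terms, $L_{v,y}L_{y,x} = a_{v,y}a_{y,x}$, and summing these counts the common neighbours of $v$ and $x$, giving $n_{v,x}$. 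Hence
\[
  [L^2]_{v,x} = n_{v,x} - \big(d(v)+d(x)\big)a_{v,x}.
\]

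\emph{Step 2: parts (b) and (c).} For (b), write $[L^3]_{v,v} = [L^2]_{v,v}L_{v,v} + \sum_{x\neq v}[L^2]_{v,x}L_{x,v}$. The first term is $(d(v)^2+d(v))d(v)$. Each term of the second sum is $-a_{v,x}\big(n_{v,x} - (d(v)+d(x))a_{v,x}\big)$. Using $a_{v,x}^2 = a_{v,x}$ and $\sum_x a_{v,x} = d(v)$, the $d(v)$ part of the second sum contributes an extra $d(v)^2$. This yields $d(v)^3 + 2d(v)^2 + \sum_{x\neq v} a_{v,x}\big(d(x) - n_{v,x}\big)$, as claimed. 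For (c), $[L^4]_{v,v} = \sum_x [L^2]_{v,x}^2$; separating the $x=v$ term and substituting the Step 1 formulas gives the stated expression immediately.

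\emph{Step 3: part (d).} Double count the walks $v, y, x$ with $y \in N(v)$ and $x \in N(y)\setminus\{v\}$. Grouping by $x$ gives $\sum_{x\neq v} n_{v,x}$. Grouping by $y$ gives $\sum_{y\in N(v)}(d(y)-1)$.

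The only step needing any care is the bookkeeping in Step 1, namely the case split $y \in \{v,x\}$ versus $y \notin \{v,x\}$. After that, parts (b)--(d) are one-line substitutions.
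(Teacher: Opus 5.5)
Your proposal is correct and follows the paper's proof essentially step for step. The paper likewise derives the off-diagonal formula $[L^2]_{v,x} = n_{v,x} - (d(v)+d(x))a_{v,x}$, expands $[L^3]_{v,v}$ along row $v$, writes $[L^4]_{v,v}$ as the sum of squared entries of row $v$ of $L^2$, and proves (d) by counting length-$2$ walks from $v$ that do not return to $v$. As you note, the spectral hypothesis plays no role.
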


\begin{proof}
    Part~(a) follows simply by multiplying $L(G)$ by itself, as does the formula
    \begin{align}\label{eq:L2_offdiag}
        \big[L(G)^2\big]_{v,w} = n_{v,w} - \big(d(v)+d(w)\big)a_{v,w}.
    \end{align}
    It follows that
    \begin{align*}
        \big[L(G)^3\big]_{v,v} & = d(v)\big[L(G)^2\big]_{v,v} - \sum_{x \neq v}a_{v,x}\big[L(G)^2\big]_{v,x} \\
        & = d(v)\big(d(v)^2 + d(v)\big) - \sum_{x \neq v}a_{v,x}\big(n_{v,x} - \big(d(v)+d(x)\big)a_{v,x}\big),
    \end{align*}
    which simplifies to the formula provided in part~(b). Similarly,
    \begin{align*}
        \big[L(G)^4\big]_{v,v} & = \sum_{x \in V(G)}\big[L(G)^2\big]_{v,x}^2 = \big[L(G)^2\big]_{v,v}^2 + \sum_{x \neq v}\big[L(G)^2\big]_{v,x}^2,
    \end{align*}
    which gives part~(c) once we plug in the formulas from part~(a) and Equation~\eqref{eq:L2_offdiag}. Finally, both sides of part~(d) count the number of length-$2$ walks in $G$ that start at $v$ but do not end at $v$.
\end{proof}

\subsection{Setting up the linear systems}\label{sec:lown_linear_system}

We now describe a relaxation of the problem ``there exists a simple graph $G$ of order $n$ with Laplacian spectrum $S_{n,n}$'' that takes the form of a system of linear equations in nonnegative real variables. Verification that these linear systems have no solutions when $n \in \{16, 20, 21, 24, 25, 28\}$ is what will show that there is no simple graph $G$ with Laplacian spectrum $S_{n,n}$ for these orders.

Suppose that $G$ is an order-$n$ graph with Laplacian spectrum $S_{n,n}$. For each $2 \leq d \leq n-3$, let $m_d := |\{v \in V(G) : d(v) = d\}|$ be the multiplicity of the degree $d$ in the degree sequence of $G$, and write $\m := (m_2,m_3,\ldots,m_{n-3})$. We construct a linear system for each such vector that is not ruled out by Lemma~\ref{lem:Snn_basic_conditions}(a) and parts~(c) and~(d) of Lemma~\ref{lem:Snn_laplacian_moments}. The variables in this linear system will be denoted by $y_{d,k}$ for integers $d$ and $k$, and $z_{a,b,c,d}$ for integers $a$, $b$, $c$, and $d$. These variables will all be nonnegative.

\subsubsection{The first set of variables}\label{sec:lown_linear_system_y}

We start by describing the variables $y_{d,k}$ and illustrating what constraints they must satisfy. Let $\{\w^{(0)}, \w^{(1)}, \ldots, \w^{(n-1)}\}$ be a real orthonormal eigenbasis of $L(G)$ with $\w^{(k)}$ corresponding to eigenvalue $k$ for all $0 \leq k \leq n-1$ (choose $\w^{(0)}=\one/\sqrt{n}$). For $2 \leq d \leq n-3$ and $0 \leq k \leq n-1$, we define
\begin{equation}\label{eq:ydef}
    y_{d,k} := \sum_{\substack{v \in V(G) \\ d(v)=d}} \big(w_v^{(k)}\big)^2,
\end{equation}
which measures how much of the unit Laplacian eigenvector of $G$ corresponding to eigenvalue $k$ is concentrated on degree-$d$ vertices. Notice that $y_{d,0}=m_d/n$, so these variables are determined by $\m$ and thus do not need to be included as variables in the linear system.

\begin{lemma}\label{lem:y_dk_constraints}
    Let $n \geq 2$ be an integer and let $G$ be a simple graph on $n$ vertices with Laplacian spectrum equal to $S_{n,n}$. Let $y_{d,k}$ be as in Equation~\eqref{eq:ydef}. Then:
    \begin{itemize}
        \item[(a)] $\displaystyle \sum_{d=2}^{n-3}y_{d,k} = 1$ for all integers $0 \leq k \leq n-1$.
        
        \item[(b)] $\displaystyle n\sum_{k=1}^{n-1}y_{d,k}=(n-1)m_d$ for all integers $2 \leq d \leq n-3$.
        
        \item[(c)] $\displaystyle \sum_{k=1}^{n-1}k y_{d,k} = dm_d$ for all integers $2 \leq d \leq n-3$.
        
        \item[(d)] $\displaystyle \sum_{k=1}^{n-1}k^2 y_{d,k} = (d^2+d)m_d$ for all integers $2 \leq d \leq n-3$.
    \end{itemize}
\end{lemma}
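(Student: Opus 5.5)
The plan is to express every statement in terms of the orthonormal eigenbasis and evaluate diagonal entries of powers of $L := L(G)$ via the spectral decomposition. Since $\{\w^{(0)},\ldots,\w^{(n-1)}\}$ is a real orthonormal eigenbasis with $L\w^{(k)} = k\w^{(k)}$, we have
\begin{align*}
    L^j = \sum_{k=0}^{n-1} k^j\, \w^{(k)}\big(\w^{(k)}\big)^T \quad \text{for all integers } j \geq 0,
\end{align*}
with the convention $0^0 = 1$. Reading off the $(v,v)$-entry gives $[L^j]_{v,v} = \sum_{k} k^j \big(w^{(k)}_v\big)^2$. Summing this identity over all vertices $v$ of degree $d$ turns the left side into a sum of local Laplacian moments over degree-$d$ vertices, and the right side into $\sum_k k^j y_{d,k}$ by Equation~\eqref{eq:ydef}.

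Part~(a) is the simplest. The degree classes $\{v : d(v) = d\}$ for $2 \leq d \leq n-3$ partition $V(G)$ by Lemma~\ref{lem:Snn_basic_conditions}(a). Hence $\sum_d y_{d,k} = \sum_{v} \big(w^{(k)}_v\big)^2 = \|\w^{(k)}\|^2 = 1$.

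For the remaining parts, take $j = 0,1,2$ in the identity above.
\begin{itemize}
    \item With $j=0$, we get $I = \sum_k \w^{(k)}\big(\w^{(k)}\big)^T$, so $\sum_{k=0}^{n-1} y_{d,k} = m_d$. Subtracting $y_{d,0} = m_d/n$ (since $\w^{(0)} = \one/\sqrt{n}$) gives $\sum_{k\geq 1} y_{d,k} = (n-1)m_d/n$, which is part~(b).
    \item With $j=1$, we have $[L]_{v,v} = d(v)$. The $k=0$ term vanishes because of the factor $k$, so summing over the $m_d$ vertices of degree $d$ gives part~(c).
    \item With $j=2$, Lemma~\ref{lem:laplacian_local_moments}(a) gives $[L^2]_{v,v} = d(v)^2 + d(v)$. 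Summing over degree-$d$ vertices gives $(d^2+d)m_d$, which is part~(d).
\end{itemize}

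There is no real obstacle here. The only points needing care are the handling of the $k=0$ term, namely using $0^j = 0$ for $j \geq 1$ and the explicit value of $y_{d,0}$ for part~(b), and the fact that spectrum $S_{n,n}$ is exactly what lets the eigenvalues be indexed by $k = 0,\ldots,n-1$ with multiplicity one.
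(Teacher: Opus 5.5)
Your proposal is correct and follows essentially the same route as the paper. Part~(a) comes from $\|\w^{(k)}\|^2=1$, part~(b) from the completeness relation $\sum_{k}\big(w_v^{(k)}\big)^2=1$ minus the $k=0$ contribution $1/n$, and parts~(c)--(d) from summing $[L^r]_{v,v}=\sum_k k^r\big(w_v^{(k)}\big)^2$ over degree-$d$ vertices with $r=1,2$ together with Lemma~\ref{lem:laplacian_local_moments}(a). Your explicit appeal to Lemma~\ref{lem:Snn_basic_conditions}(a), to ensure the degree classes $2\le d\le n-3$ cover $V(G)$ in part~(a), is a detail the paper leaves implicit.
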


\begin{proof}
    Part~(a) follows from the fact that $\sum_{d=2}^{n-3}y_{d,k} = \big\|\w^{(k)}\big\|^2$, and each $\w^{(k)}$ comes from an orthonormal basis and is thus a unit vector.

    Now notice that the fact that $\{\w^{(0)}, \w^{(1)}, \ldots, \w^{(n-1)}\}$ is an orthonormal basis tells us that $\sum_{k=0}^{n-1}\big(w_v^{(k)}\big)^2 = 1$ for all $v \in V(G)$. Since $\w^{(0)} = \one/\sqrt{n}$, removing the $k = 0$ term from this sum gives
    \[
        \sum_{k=1}^{n-1}\big(w_v^{(k)}\big)^2=1-\frac{1}{n}=\frac{n-1}{n}.
    \]
    Summing this identity over the $m_d$ vertices of degree $d$ and multiplying by $n$ gives part~(b).

    To see parts~(c) and (d), we first use the fact that the eigenvalues of $L(G)$ are $0$, $1$, $\ldots$, $n-1$, with corresponding orthonormal eigenbasis $\{\w^{(0)}, \w^{(1)}, \ldots, \w^{(n-1)}\}$, to see that if $r \geq 1$ is an integer then
    \begin{align}\label{eq:yk_powers_L}
        \sum_{\substack{v \in V(G)\\d(v)=d}}\big[L(G)^r\big]_{v,v} = \sum_{\substack{v \in V(G) \\ d(v)=d}} \sum_{k=1}^{n-1}k^r \big(w_v^{(k)}\big)^2 = \sum_{k=1}^{n-1}k^r y_{d,k}.
    \end{align}
    When $r = 1$, the fact that $\big[L(G)\big]_{v,v} = d(v)$ gives part~(c). When $r = 2$, Lemma~\ref{lem:laplacian_local_moments}(a) gives part~(d).
\end{proof}

\subsubsection{The second set of variables}\label{sec:lown_linear_system_z}

The four sets of equations of Lemma~\ref{lem:y_dk_constraints} give necessary conditions for the existence of an order $n$ graph with Laplacian spectrum $S_{n,n}$, but alone they are not enough to show that no such graph exists for the values of $n$ that we are interested in. For this reason, we now introduce a second set of nonnegative variables that can be used to construct additional necessary linear equations.

For integers $a \in \{0,1\}$ and $2 \leq c \leq d \leq n-3$ (we will specify the possible values of $b$ shortly), we define $z_{a,b,c,d}$ to be the number of unordered pairs $\{v,w\}$ of distinct vertices of $G$ such that
\[
    a_{v,w} = a, \qquad n_{v,w} = b, \qquad d(v) = c, \quad \text{and} \quad d(w) = d,
\]
where $a_{v,w}$ and $n_{v,w}$ are as in Equation~\eqref{eq:na_defn}. We note that $b$ is bounded by the degrees of $v$ and $w$: after deleting those two vertices, their neighborhoods have sizes $c-a$ and $d-a$ inside a set of $n-2$ vertices. This gives
\begin{align}\label{eq:allowed_b}
    \max\{0, c+d-(n-2+2a)\} \leq b \leq \min\{c, d\} - a.
\end{align}

Although the $z$ variables count pairs in a graph and are thus nonnegative integers, we do not require them to be integers in the linear system that we are constructing (which causes no problems, since this linear system is just a necessary condition for existence of a simple graph with Laplacian spectrum $S_{n,n}$, not a sufficient one).

To write the upcoming linear equations involving the $z_{a,b,c,d}$ variables compactly, we introduce a notation for summing over all ordered pairs $(v,w)$ of distinct vertices with $d(v) = d$. For any function $f$ of $a$, $b$, $c$, and $d$, we define
\begin{align*}
    Z_d(f) := \sum_{c<d}\sum_{a,b}f(a,b,c,d)z_{a,b,c,d} + \sum_{c>d}\sum_{a,b}f(a,b,c,d)z_{a,b,d,c} + 2\sum_{a,b}f(a,b,d,d)z_{a,b,d,d},
\end{align*}
which is linear in the $z_{a,b,c,d}$ variables. We note that, even though the degrees $c$ and $d$ in the subscript of $z_{a,b,c,d}$ are always sorted with $c \leq d$, there is no such restriction on $f$. Both sums over $c$ are restricted to $2 \leq c \leq n-3$, and the sums over $a$ and $b$ are restricted to $a \in \{0,1\}$ and the values of $b$ from Inequality~\eqref{eq:allowed_b}. It follows that
\[
    Z_d(f) = \sum_{\substack{v \in V(G)\\d(v)=d}}\sum_{\substack{w \in V(G)\\w\neq v}} f\big(a_{v,w},n_{v,w},d(w),d(v)\big).
\]

With this notation in hand, we are in a position to state the linear equations that the $z_{a,b,c,d}$ variables must satisfy:

\begin{lemma}\label{lem:z_constraints}
    Let $n \geq 2$ be an integer and let $G$ be a simple graph on $n$ vertices with Laplacian spectrum equal to $S_{n,n}$. Then the variables $z_{a,b,c,d}$ satisfy the following:
    \begin{itemize}
        \item[(a)] $Z_d(a) = dm_d$ for all integers $2 \leq d \leq n-3$.
        
        \item[(b)] $Z_d\big(b-a(c-1)\big) = 0$ for all integers $2 \leq d \leq n-3$.
    
        \item[(c)] $\displaystyle \sum_{k=1}^{n-1}k^3 y_{d,k} - Z_d\big(a(c-b)\big) = (d^3+2d^2)m_d$ for all integers $2 \leq d \leq n-3$.
        
        \item[(d)] $\displaystyle \sum_{k=1}^{n-1}k^4 y_{d,k} - Z_d\big((b-(c+d)a)^2\big) = (d^2+d)^2 m_d$ for all integers $2 \leq d \leq n-3$.

        \item[(e)] $\displaystyle \sum_{a,b}z_{a,b,c,d} = \begin{cases} m_cm_d & \text{if} \ \ c<d\\ m_d(m_d-1)/2 & \text{if} \ \ c=d\end{cases}$ \ for all integers $2 \leq c \leq d \leq n-3$.
    \end{itemize}
\end{lemma}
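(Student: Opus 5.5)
All five identities follow from one translation device. By the formula displayed just before the statement,
\[
Z_d(f)=\sum_{d(v)=d}\sum_{w\neq v} f\big(a_{v,w},n_{v,w},d(w),d(v)\big).
\]
So each linear condition on the $z$ variables is a sum, over all vertices $v$ of degree $d$, of a vertex-local identity in the quantities $a_{v,w}$, $n_{v,w}$ and $d(w)$. The plan is to write down the appropriate vertex-local identity at each $v$, sum it over the $m_d$ vertices of degree $d$, and then use Equation~\eqref{eq:yk_powers_L} to trade diagonal entries of $L(G)^r$ for the moments $\sum_k k^r y_{d,k}$.

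\textbf{Part (a).} For each $v$ we have $\sum_{w\neq v}a_{v,w}=d(v)$. Summing over the $m_d$ vertices of degree $d$ gives $Z_d(a)=dm_d$.

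\textbf{Part (b).} Here we use Lemma~\ref{lem:laplacian_local_moments}(d). Its right-hand side can be rewritten as
\[
\sum_{w\in N(v)}\big(d(w)-1\big)=\sum_{w\neq v}a_{v,w}\big(d(w)-1\big).
\]
Hence $\sum_{w\neq v}\big(n_{v,w}-a_{v,w}(d(w)-1)\big)=0$ for every $v$. The argument $c$ of $f$ is $d(w)$, so summing over $v$ of degree $d$ gives exactly $Z_d\big(b-a(c-1)\big)=0$.

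\textbf{Part (c).} Take $r=3$ in Equation~\eqref{eq:yk_powers_L}. This gives $\sum_k k^3y_{d,k}=\sum_{d(v)=d}[L^3]_{v,v}$. Now substitute Lemma~\ref{lem:laplacian_local_moments}(b). The constant part contributes $(d^3+2d^2)m_d$, and the remaining sum is $\sum_{d(v)=d}\sum_{x\neq v}a_{v,x}\big(d(x)-n_{v,x}\big)=Z_d\big(a(c-b)\big)$. Rearranging gives (c).

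\textbf{Part (d).} This is the same argument with $r=4$ and Lemma~\ref{lem:laplacian_local_moments}(c). The off-diagonal squares give $Z_d\big((b-(c+d)a)^2\big)$, where the slot $d$ is $d(v)$ and the slot $c$ is $d(x)$. The polynomial is symmetric in these two slots, so the order convention inside $Z_d$ does not matter.

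\textbf{Part (e).} This is pure counting. For $c<d$, the pairs $\{v,w\}$ with one endpoint of degree $c$ and the other of degree $d$ number $m_cm_d$. For $c=d$ they number $\binom{m_d}{2}$. In either case each such pair is counted in exactly one $z_{a,b,c,d}$, namely the one with its actual values of $a_{v,w}$ and $n_{v,w}$; those values lie in the range \eqref{eq:allowed_b}.

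The only step needing care is the bookkeeping of argument order in $Z_d$, that is, checking that $c$ always refers to the degree of the other vertex $w$ and $d$ to the degree of $v$. This matters for (b) and (c), where $f$ is not symmetric. It holds because $z_{a,b,c,d}$ counts unordered pairs, while $Z_d$ counts each ordered pair $(v,w)$ with $d(v)=d$ exactly once. That is why the term with $c=d$ carries the factor $2$ in the definition of $Z_d$.
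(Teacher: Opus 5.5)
Your proposal is correct and follows the paper's proof essentially step for step: (a) by summing degrees, (b) from Lemma~\ref{lem:laplacian_local_moments}(d), (c) and (d) by substituting Lemma~\ref{lem:laplacian_local_moments}(b) and~(c) into Equation~\eqref{eq:yk_powers_L}, and (e) by direct counting of pairs. Your extra check of the argument order in $Z_d$ (the degree of the other vertex goes in the $c$ slot, and the diagonal term $c=d$ carries the factor $2$) makes explicit the ordered-pair formula for $Z_d(f)$ that the paper states just before the lemma.
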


\begin{proof}
    Part~(a) follows from the formula
    \[
        Z_d(a) = \sum_{\substack{v \in V(G)\\d(v)=d}}\sum_{\substack{w \in V(G)\\w\neq v}} a_{v,w} = \sum_{\substack{v \in V(G)\\d(v)=d}}d(v) = dm_d.
    \]
    Similarly,
    \[
        Z_d\big(b-a(c-1)\big) = \sum_{\substack{v \in V(G)\\d(v)=d}}\left(\sum_{\substack{w \in V(G)\\w\neq v}} n_{v,w} - \sum_{w\in N(v)}\big(d(w)-1\big) \right) = 0,
    \]
    where the final equality follows from Lemma~\ref{lem:laplacian_local_moments}(d). This proves part~(b).
    
    To prove parts~(c) and~(d), simply plug the formulas from parts~(b) and~(c) of
    Lemma~\ref{lem:laplacian_local_moments} into Equation~\eqref{eq:yk_powers_L}. Finally, the left-hand side of part~(e) counts all unordered pairs of distinct (not necessarily adjacent) vertices with degrees $c$ and $d$. There are $m_cm_d$ such pairs when $c < d$, and $m_d(m_d-1)/2$ such pairs when $c = d$.
\end{proof}

Putting this all together gives the following result:

\begin{proposition}\label{prop:necessary_linsys}
    Let $n \geq 2$ be an integer, and let $G$ be a simple graph on $n$ vertices with Laplacian spectrum $S_{n,n}$ and degree-multiplicity vector $\m$. Then there exist nonnegative values of the variables $y_{d,k}$ and $z_{a,b,c,d}$ that simultaneously solve the four families of linear equations from Lemma~\ref{lem:y_dk_constraints} and the five families of linear equations from Lemma~\ref{lem:z_constraints}.
\end{proposition}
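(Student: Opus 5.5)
The proof is essentially a bookkeeping argument: exhibit the actual values coming from $G$ and check that they are nonnegative and satisfy every equation. Fix a real orthonormal eigenbasis $\{\w^{(0)},\ldots,\w^{(n-1)}\}$ of $L(G)$ with $\w^{(k)}$ belonging to eigenvalue $k$ and $\w^{(0)}=\one/\sqrt{n}$. Such a basis exists because $L(G)$ is real symmetric and, since the spectrum is $S_{n,n}$, each eigenvalue $0,1,\ldots,n-1$ is simple. Then define $y_{d,k}$ by Equation~\eqref{eq:ydef} and $z_{a,b,c,d}$ as the pair counts described in Section~\ref{sec:lown_linear_system_z}. Nonnegativity is immediate: each $y_{d,k}$ is a sum of squares and each $z_{a,b,c,d}$ is a cardinality.

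Next I check that the index ranges used in the linear system cover all actual data of $G$.
\begin{itemize}
\item By Lemma~\ref{lem:Snn_basic_conditions}(a), every vertex degree lies in $\{2,\ldots,n-3\}$. Hence the sets over which the $y_{d,k}$ are summed partition $V(G)$.
\item Every unordered pair of distinct vertices $\{v,w\}$, with the degrees sorted, contributes to exactly one variable $z_{a,b,c,d}$ with $2\le c\le d\le n-3$.
\item The value $b=n_{v,w}$ automatically satisfies Inequality~\eqref{eq:allowed_b}, by the counting argument given right before it. So no pair falls outside the variables present in the system.
\end{itemize}
This is needed so that the identity expressing $Z_d(f)$ as a sum over ordered pairs $(v,w)$ with $d(v)=d$ holds for the true values.

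With these values in place, Lemma~\ref{lem:y_dk_constraints}(a)--(d) give the four $y$-families, and Lemma~\ref{lem:z_constraints}(a)--(e) give the five $z$-families. Both lemmas were proved for exactly these quantities. The constraints $y_{d,0}=m_d/n$ are also consistent, since $w^{(0)}_v=1/\sqrt n$.

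The only step needing care is confirming that the $Z_d$ sums in Lemma~\ref{lem:z_constraints} run over precisely the admissible $(a,b,c)$ ranges, so that nothing in $G$ is omitted. This is the coverage check described above. Beyond that, the proposition is a direct assembly of the two lemmas.
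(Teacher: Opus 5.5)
Your proposal is correct and follows the paper's own argument. The paper presents the proposition as an immediate consequence of Lemmas~\ref{lem:y_dk_constraints} and~\ref{lem:z_constraints}, which are proved for exactly the values $y_{d,k}$ and $z_{a,b,c,d}$ computed from $G$; these are nonnegative as sums of squares and as counts. Your explicit coverage check is a useful detail the paper leaves implicit: it verifies that all degrees lie in $\{2,\ldots,n-3\}$ and that each $n_{v,w}$ satisfies Inequality~\eqref{eq:allowed_b}.
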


We emphasize that Proposition~\ref{prop:necessary_linsys} is just a necessary condition for the existence of a simple graph $G$ on $n$ vertices with Laplacian spectrum $S_{n,n}$. Its converse does not hold in general: a solution of the linear system need not come from a graph. We demonstrate with an example.

\begin{example}\label{exam:n_12_linsys_solve}
    Let $n = 12$ and consider the multiplicities
    \[
        \m = (m_2,m_3,m_4,m_5,m_6,m_7,m_8,m_9) =(3,0,1,1,4,0,0,3)
    \]
    corresponding to the degree sequence $(2,2,2,4,5,6,6,6,6,9,9,9)$. The two tables below list nonnegative values of the $y_{d,k}$ and $z_{a,b,c,d}$ variables that solve the linear system described by Proposition~\ref{prop:necessary_linsys}. All unlisted $y_{d,k}$ with $k \geq 1$, and all unlisted $z_{a,b,c,d}$, equal zero. In particular, since $m_3 = m_7 = m_8 = 0$, we have $y_{3,k} = y_{7,k} = y_{8,k} = 0$ for all $k$. As before, $y_{d,0} = m_d/12$.

    We start with the $y_{d,k}$ values:
    \begin{center}
        \begin{tabular}{c|ccccc}\toprule
            $k$ & $y_{2,k}$ & $y_{4,k}$ & $y_{5,k}$ & $y_{6,k}$ & $y_{9,k}$ \\ \midrule
            1 & $63/80$ & $21/1280$ & $7459/172800$ & $5459/43200$ & $17/640$ \\
            2 & $1$ & $0$ & $0$ & $0$ & $0$ \\
            3 & $59/64$ & $7/128$ & $0$ & $0$ & $3/128$ \\
            4 & $0$ & $101/140$ & $0$ & $39/140$ & $0$ \\
            5 & $0$ & $25/384$ & $11977/17280$ & $2089/8640$ & $0$ \\
            6 & $0$ & $0$ & $0$ & $1$ & $0$ \\
            7 & $0$ & $0$ & $143/1728$ & $1585/1728$ & $0$ \\
            8 & $0$ & $0$ & $0$ & $1$ & $0$ \\
            9 & $0$ & $79/3840$ & $311/3840$ & $0$ & $115/128$ \\
            10 & $0$ & $0$ & $0$ & $0$ & $1$ \\
            11 & $13/320$ & $517/13440$ & $719/43200$ & $31057/302400$ & $513/640$ \\ \bottomrule
        \end{tabular}
    \end{center}
    For example, each row of this table sums to $1$, which verifies that Lemma~\ref{lem:y_dk_constraints}(a) is satisfied. The five columns sum to $11/4$, $11/12$, $11/12$, $11/3$, and $11/4$, respectively. Since these all equal $11m_d/12$, Lemma~\ref{lem:y_dk_constraints}(b) is satisfied. The other two parts of Lemma~\ref{lem:y_dk_constraints} are similarly straightforward (albeit somewhat tedious) to verify.
    
    Next, we list the nonzero $z_{a,b,c,d}$ values:\\[-0.8cm]
    \begin{center}
        \begin{tabular}{cc@{\hspace{1cm}}cc@{\hspace{1cm}}cc@{\hspace{1cm}}cc}\toprule
            $(a,b,c,d)$ & $z_{a,b,c,d}$ & $(a,b,c,d)$ & $z_{a,b,c,d}$ & $(a,b,c,d)$ & $z_{a,b,c,d}$ & $(a,b,c,d)$ & $z_{a,b,c,d}$ \\\midrule
            $(0,0,2,2)$ & $9/4$ & $(0,4,4,5)$ & $1/2$ & $(1,1,2,2)$ & $3/4$ & $(1,3,5,6)$ & $763/540$ \\
            $(0,0,2,5)$ & $763/360$ & $(0,4,4,6)$ & $7/2$ & $(1,1,2,9)$ & $9/4$ & $(1,3,6,9)$ & $29/8$ \\
            $(0,0,2,6)$ & $631/180$ & $(0,4,6,6)$ & $1$ & $(1,1,4,5)$ & $1/2$ & $(1,4,5,9)$ & $3$ \\
            $(0,2,2,4)$ & $3$ & $(0,5,5,6)$ & $5/2$ & $(1,1,4,9)$ & $2$ & $(1,4,6,6)$ & $451/270$ \\
            $(0,2,2,5)$ & $317/360$ & $(0,9,9,9)$ & $3/4$ & $(1,1,6,6)$ & $89/270$ & $(1,5,6,6)$ & $3$ \\
            $(0,2,2,6)$ & $1529/180$& $(1,0,2,9)$ & $9/4$ & $(1,2,4,6)$ & $1/2$ & $(1,5,6,9)$ & $67/8$ \\
            $(0,2,2,9)$ & $9/2$ & $(1,0,5,6)$ & $47/540$ & $(1,3,4,9)$ & $1$ & $(1,6,9,9)$ & $9/4$ \\\bottomrule
        \end{tabular}
    \end{center}
    For example, Lemma~\ref{lem:z_constraints}(e) is satisfied when $c = d = 2$, since
    \[
        \sum_{a,b} z_{a,b,2,2} = z_{0,0,2,2} + z_{1,1,2,2} = \frac{9}{4} + \frac{3}{4} = 3 = m_2(m_2 - 1)/2.
    \]
    The other equations are similarly straightforward to check.
\end{example}

In spite of the above example when $n = 12$, there does not exist a simple graph on $12$ vertices with Laplacian spectrum $S_{n,n}$. Furthermore, the linear system described by Proposition~\ref{prop:necessary_linsys} does not have any nonnegative solutions when $n \in \{16,20,21,24,25,28\}$, which are the values of interest to us. We describe the computational proof of this fact in the next two subsections.

\subsection{Infeasibility certificates}\label{sec:lown_infeasible}

For a fixed order $n$, write the system described by Proposition~\ref{prop:necessary_linsys} as $A_n\x = \b_{\m,n}$, where $\x \geq \0$ is a vector containing the $y_{d,k}$ variables with $k \geq 1$ and the $z_{a,b,c,d}$ variables. The integer matrix $A_n$ depends only on $n$, while the integer vector $\b_{\m,n}$ depends on $n$ as well as the multiplicity vector $\m$.

We certify infeasibility of this nonnegative linear system by using the following well-known consequence of linear programming duality, often called \emph{Farkas' lemma} \cite[Corollary~7.1d]{Sch86}.

\begin{lemma}\label{lem:farkas}
    Suppose that $A$ is an integer matrix and $\b$ and $\vv$ are integer vectors of compatible dimensions. If
    \[
        \vv^T A \geq \0 \qquad \text{and} \qquad \vv^T\b < 0
    \]
    then there does not exist a vector $\x \geq \0$ satisfying $A\x = \b$.
\end{lemma}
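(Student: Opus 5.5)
This is the easy direction of Farkas' lemma, so I will argue directly by contradiction rather than appeal to the full duality theorem. Suppose, to the contrary, that some real vector $\x \geq \0$ satisfies $A\x = \b$. I will evaluate the scalar $\vv^T A \x$ in two ways and show that the two values have incompatible signs.

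First, multiply the equation $A\x = \b$ on the left by $\vv^T$. This gives $\vv^T A \x = \vv^T \b$, and the hypothesis says $\vv^T\b < 0$. So $\vv^T A\x$ is strictly negative.

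Second, regard $\vv^T A\x$ as the inner product of the row vector $\vv^T A$ with $\x$:
\[
    \vv^T A\x = \sum_i (\vv^T A)_i\, x_i .
\]
By hypothesis every entry $(\vv^T A)_i \geq 0$, and by assumption every $x_i \geq 0$. Each term is therefore nonnegative, and so is the sum: $\vv^T A \x \geq 0$.

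These two conclusions contradict each other, so no such $\x$ exists. The integrality hypotheses on $A$, $\b$ and $\vv$ play no role in the logic. They matter only in practice: they are what allow the conditions $\vv^T A \geq \0$ and $\vv^T\b < 0$ to be checked exactly with integer arithmetic, which is how the certificates discussed in Section~\ref{sec:low_n_computational} are verified. There is no real obstacle in this argument. The only point needing care is that the conclusion applies to real (not just integer) nonnegative $\x$. That is exactly what the relaxation in Proposition~\ref{prop:necessary_linsys} requires, since the $y_{d,k}$ and $z_{a,b,c,d}$ variables there are allowed to take arbitrary nonnegative real values.
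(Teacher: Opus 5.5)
Your proof is correct and is essentially the paper's own argument: assuming a solution $\x \geq \0$, both derive the contradiction $0 \leq (\vv^T A)\x = \vv^T\b < 0$. Your remarks that integrality plays no logical role and that the conclusion covers real nonnegative $\x$ are also accurate, and they match how the lemma is used for the relaxed linear system.
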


\begin{proof}
    Suppose (for the sake of establishing a contradiction) that $\x \geq \0$ is such that $A\x = \b$. Then
    \[
        0 \leq (\vv^T A)\x  = \vv^T\b < 0,
    \]
    which is a contradiction.
\end{proof}

We call the vector $\vv$ an \emph{infeasibility certificate}, or simply a \emph{certificate} for short. Once we have a certificate $\vv$, verifying that the associated linear system does not have a nonnegative solution is as straightforward as checking that $\vv^T A \geq \0$ and $\vv^T\b < 0$, which are computations only involving addition and multiplication of integers.

\subsection{The computation}\label{sec:lown_the_computation}

We now have all of the tools required to describe our computational proof that there does not exist an order $n$ graph with Laplacian spectrum $S_{n,n}$ when $n \in \{16,20,21,24,25,28\}$. For a fixed value of $n$, the computation proceeds as follows:

\begin{itemize}
    \item Enumerate the nondecreasing integer sequences $(d_1, d_2, \ldots, d_n)$ satisfying $2 \leq d_1 \leq d_2 \leq \cdots \leq d_n \leq n-3$ and
    \begin{align*}
        \sum_{i=1}^n d_i = \frac{n(n-1)}{2} \quad \text{and} \quad \sum_{i=1}^n d_i^2 = \frac{n(n-1)(n-2)}{3}.
    \end{align*}
    By Lemma~\ref{lem:Snn_basic_conditions}(a) and parts~(c) and~(d) of Lemma~\ref{lem:Snn_laplacian_moments}, this gets us all degree sequences that we need to check.

    \item For every pair of distinct complementary degree sequences, discard one of them (since $G$ has Laplacian spectrum $S_{n,n}$ if and only if $\Gc$ does). Retain every degree sequence that equals its own complement.

    \item For each remaining degree sequence, we have provided an infeasibility certificate $\vv$ (see \cite{JohCertificates}). Read this infeasibility certificate and verify that
    \[
        \vv^T A_n \geq \0 \qquad \text{and} \qquad \vv^T\b_{\m,n} < 0,
    \]
    where $\m$ is the multiplicity vector of the sequence or its complement.
\end{itemize}

The infeasibility certificates that are required for the third bullet point above were found via extensive computation. However, now that they have been found for all degree sequences when $n \in \{16,20,21,24,25,28\}$, verification of the $S_{n,n}$ conjecture in these cases only requires the computational steps described above.

The number of degree sequences, degree sequence representatives (degree sequences up to complementation), and infeasibility certificates for the $6$ values of $n$ that are of interest to us are provided in Table~\ref{tab:infeasible_counts}. For each order, the verifier checked every entry of $\vv^T A_n$ once for each distinct certificate used. For every representative, it also checked the strict inequality $\vv^T\b_{\m,n} < 0$ for the multiplicity vector of that representative or its complement. In particular, every candidate degree sequence was ruled out, either directly by an infeasibility certificate or by complementation, which immediately gives us the conclusion that we want:

\begin{theorem}\label{thm:certified_small_orders}
    Suppose $n \in \{16,20,21,24,25,28\}$. Then there does not exist a simple graph on $n$ vertices with Laplacian spectrum $S_{n,n}$.
\end{theorem}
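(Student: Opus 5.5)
The proof is by contradiction, combining Proposition~\ref{prop:necessary_linsys} with Lemma~\ref{lem:farkas} and the exact verification of the stored certificates. Fix $n \in \{16,20,21,24,25,28\}$ and suppose that $G$ is a simple graph on $n$ vertices with Laplacian spectrum $S_{n,n}$. Let $\m$ be its degree-multiplicity vector.

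By Lemma~\ref{lem:Snn_basic_conditions}(a), every degree lies in $[2,n-3]$. By parts~(c) and~(d) of Lemma~\ref{lem:Snn_laplacian_moments}, the sorted degree sequence of $G$ satisfies the two power-sum identities. So the sorted degree sequence of $G$ is among the finitely many sequences produced in the first bullet of the computation above. The complement $\Gc$ also has spectrum $S_{n,n}$, and its degree sequence is $(n-1-d_n,\ldots,n-1-d_1)$. Replacing $G$ by $\Gc$ if necessary, we may therefore assume that the degree sequence of $G$ is one of the retained representatives.

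Proposition~\ref{prop:necessary_linsys} then gives a vector $\x \geq \0$ with $A_n\x = \b_{\m,n}$. The stored certificate $\vv$ for this representative satisfies $\vv^TA_n \geq \0$ and $\vv^T\b_{\m,n}<0$, and both inequalities are checked exactly in integer arithmetic. By Lemma~\ref{lem:farkas}, no such $\x$ exists. This contradiction proves the theorem.

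The logical argument is routine. The real content, and the main obstacle, is the exhaustive verification step, which requires two checks.

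\textbf{Completeness of the enumeration.} The enumeration of degree sequences must be complete. I would organize it as a recursive search over nondecreasing sequences, pruning with bounds on the remaining sum and the remaining sum of squares.

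\textbf{Exactness and correctness of the verification.} The certificates must be verified exactly. For each $n$, I would build $A_n$ and $\b_{\m,n}$ directly from the equations of Lemmas~\ref{lem:y_dk_constraints} and~\ref{lem:z_constraints}. The columns of $A_n$ are indexed by the $y_{d,k}$ with $k\ge1$ and by the $z_{a,b,c,d}$ with $b$ ranging over the values allowed by \eqref{eq:allowed_b}. Rational certificates found by numerical linear programming would be scaled to integer vectors, and then the sign conditions would be checked with arbitrary-precision arithmetic.

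One subtlety is that a certificate stored for a sequence's complement must be checked against $\b_{\m,n}$ for the correct $\m$. The argument above handles this by reducing to representatives via complementation.
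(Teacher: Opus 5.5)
Your proposal is correct and follows essentially the same route as the paper: enumerate the degree sequences permitted by Lemma~\ref{lem:Snn_basic_conditions}(a) and Lemma~\ref{lem:Snn_laplacian_moments}(c)--(d), reduce modulo complementation, and rule out each representative with a Farkas certificate (Lemma~\ref{lem:farkas}) for the system of Proposition~\ref{prop:necessary_linsys}, checked in exact integer arithmetic. As in the paper, all of the substance lies in the existence and exact verification of the stored certificates, which your argument, like the paper's, takes from the accompanying computational data.
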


\begin{table}[ht]
    \centering
    \begin{tabular}{rrrrr}\toprule
        $n$ & Degree sequences & Representatives & Certificates & Verification time\\\midrule
        16 & 2,249 & 1,135 & 73 & 0.16 seconds \\
        20 & 209,932 & 104,966 & 502 & 3.80 seconds \\
        21 & 675,075 & 337,610 & 581 & 11.7 seconds \\
        24 & 23,479,194 & 11,740,338 & 1,619 & 6.99 minutes \\
        25 & 78,027,831 & 39,014,564 & 1,925 & 24.1 minutes \\
        28 & 2,986,596,342 & 1,493,298,171 & 5,204 & 15.8 hours \\\bottomrule
    \end{tabular}
    \caption{The number of degree sequences, degree sequence representatives (up to complementation), and infeasibility certificates for $n \in \{16,20,21,24,25,28\}$. The final column records the amount of time required for us to verify all certificates of that order on a standard desktop computer.}
    \label{tab:infeasible_counts}
\end{table}

All infeasibility certificates, as well as the code used to verify them, are available for download at \cite{JohCertificates}.

\section{Part 2: Graphs of high order}\label{part:high_n}

This section is devoted to the proof that there does not exist a graph of order $n \geq 32$ that has Laplacian spectrum equal to $S_{n,n}$. The proof of this case is much more mathematically involved, and much less computational, than the proof of the low-order case was.

\subsection{An overview of the high-order proof}\label{sec:high_n_overview}

Before we can describe how our proof of the conjecture for $n \geq 32$ works, we have to introduce a new family of graphs:

\begin{definition}\label{defn:eigensimple}
    We call a simple graph $G$ on $n \geq 3$ vertices \emph{eigensimple} if
    \begin{align*}
        \lambda_2(G) = \lambda_2(\Gc) = 1 \quad \text{and} \quad \lambda_3(G), \lambda_3(\Gc) \geq 2.
    \end{align*}
\end{definition}

In particular, notice that eigensimple graphs are necessarily connected (since $\lambda_2(G) = 1 > 0$) with algebraic connectivity equal to $\lambda_2(G) = 1$, and if $G$ is eigensimple then so is $\Gc$. For $n\ge3$, every simple graph with Laplacian spectrum $S_{n,n}$ is eigensimple (however, we emphasize that eigensimple graphs are \emph{not} defined to be Laplacian integral). Despite how seemingly weak the spectral constraints on eigensimple graphs are, our main result of this section shows that they do not exist when $n \geq 32$. This theorem, once proved, immediately implies Conjecture~\ref{conj:Snn} when $n \geq 32$, since every graph with Laplacian spectrum $S_{n,n}$ is eigensimple:

\begin{theorem}\label{thm:no_large_eigensimple}
    There do not exist eigensimple graphs on $n \geq 32$ vertices.
\end{theorem}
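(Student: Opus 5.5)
Suppose, for contradiction, that $G$ is eigensimple on $n \geq 32$ vertices. Let $\x$ be a unit $1$-eigenvector of $G$ and $\y$ a unit $1$-eigenvector of $\Gc$. Since $\lambda_3(G) \geq 2$ and $\lambda_3(\Gc)\ge 2$, both eigenvalues $1$ are simple in the bottom part of the spectrum. Hence $\x$ and $\y$ are essentially determined up to sign, and they are Fiedler vectors orthogonal to $\one$.

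\textbf{Step 1 (complement interaction).} From $L(\Gc) = nI - J - L(G)$ we get $\Gc$-eigenvalue $n-1$ for $\x$. Thus $\x$ is simultaneously an eigenvector for the smallest nonzero eigenvalue of $G$ and for $\lambda_n(\Gc) = n-1$. Similarly, $\y$ is an eigenvector for $\lambda_n(G)=n-1$. So $n-1$ is an eigenvalue of $G$, which forces $\Gc$ to be disconnected unless we only have $\lambda_n\le n$. Indeed $n-1$ is allowed, so there is no immediate contradiction. Instead I would use the gap information: since $\lambda_3\ge2$, the Rayleigh quotient gives $\z^TL\z \ge 2\|\z\|^2$ for all $\z\perp \one,\x$.

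\textbf{Step 2 (local structure from the eigenvector equation).} Apply \eqref{eq:eigenvector_equation}. At a vertex $v$ maximizing $x_v$, every term $x_v-x_w\ge0$, so $x_v \ge d(v)(x_v - \text{avg})$. This bounds how far neighbours can lie below $\max(\x)$, and symmetrically near $\min(\x)$. Feeding $\e_v$-type and $\one_S$-type test vectors projected orthogonally to $\one,\x$ into the $\lambda_3\ge2$ Rayleigh bound yields inequalities relating $d(v)$, $\spread(\x)$, $\range(\x)$ and $\|\x_+\|,\|\x_-\|$. The same inequalities hold for $\y$ in $\Gc$, with degrees $n-1-d(v)$. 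The quantity $\spread$ enters naturally because one can choose the test vector supported on the side of smaller magnitude.

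\textbf{Step 3 (closing).} The inequalities for $G$ force either small degrees at the extreme vertices of $\x$ or large $\range(\x)$. Combined with $\|\x\|=1$ and $\x\perp\one$, they give polynomial inequalities in a few real parameters, together with $n$. The analogous inequalities hold for $\Gc$. Using $\y^T\x$-type orthogonality (both vectors are eigenvectors of $L(G)$ for the distinct eigenvalues $n-1$ and $1$), I then combine the two systems and exhibit a sum-of-squares certificate showing that the system is infeasible for $n\ge32$.

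\textbf{Main obstacle.} Step 2 is where I expect the difficulty: turning the weak spectral hypothesis into inequalities sharp enough that they fail for $n\ge 32$. I would try test vectors supported on $N(v)\cup\{v\}$ for the extreme vertex $v$, projected off $\x$. I would then need careful case splits on the sign pattern and on whether $\spread(\x)$ is small, since a small $\spread$ means one side of $\x$ is concentrated on few vertices.
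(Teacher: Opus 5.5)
Your outline has a genuine gap at the step where the contradiction has to come from: it never gives a quantitative link between $\x$ and $\y$. Suppose the Step~2 inequalities for $\x$ involve only quantities of $G$ (range, spread, $\|\x_\pm\|$, degrees at extreme vertices) and those for $\y$ only quantities of $\Gc$. Then the two halves share no variables other than $n$, so the combined system is feasible as soon as each half is. Each half is feasible for every large $n$, because, as you describe it, it is derived only from the eigenvector equation and $\lambda_3\ge 2$. For example, $K_{n-1}$ with a pendant vertex has Laplacian spectrum $\{0,1,(n-1)^{(n-3)},n\}$. The only coupling you name, $\x^T\y=0$, is a single relation among all $2n$ coordinates and cannot be expressed in your few summary parameters, so Step~3 has nothing to certify.

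The missing idea is the quartic coupling used in the paper. From $\x,\y\perp\one$, $\|\x\|=\|\y\|=1$ and $\x^T\y=0$ one gets $\sum_{i<j}(x_i-x_j)^2(y_i-y_j)^2=1+n\sum_i x_i^2y_i^2$. Splitting the pairs into edges of $G$ and of $\Gc$ and using $\x^TL\x=\y^T\overline{L}\y=1$ bounds the left side by $\range(\x)^2+\range(\y)^2$. This yields the \emph{lower} bound $\range(\x)^2+\range(\y)^2\ge 1+n\sum_i x_i^2y_i^2+Q(G)$ with $Q(G)\ge 0$. That lower bound is what gets played against \emph{upper} bounds on each range separately, such as $\range(\x)^2\le 1/R(h(j),\spread(\x))$.

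Two further ingredients are needed to make this sharp enough, and neither appears in your plan.

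First, to keep useful terms of $Q(G)$ and $n\sum_i x_i^2y_i^2$, you must know how the extremes of $\x$ and $\y$ sit relative to each other. The paper proves, from the diameter-$3$ property and the absence of cut vertices in $G$ and $\Gc$, that they form an induced path $p-q-r-s$ with $\{q,r\}$ dominating $G$ and $\{p,s\}$ dominating $\Gc$. This gives the sign pattern $0\le x_q\le x_p$, $x_s\le x_r\le 0$, $y_r\le y_p\le 0$, $0\le y_s\le y_q$. The domination also supplies the extra inequalities needed when the extreme vertices have small degree.

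Second, the bound $\max(\x)^2\le\frac{5}{13}\|\x_+\|^2$ follows from the eigenvector equation and Cauchy--Schwarz only when the maximizing vertex has degree at least $4$. Degrees $2$ and $3$ require grounded-Laplacian arguments in $\Gc$, plus positive semidefiniteness of $L-2I+\frac{2}{n}J+\x\x^T$. Your test vectors on $N(v)\cup\{v\}$ are not shown to give anything of comparable strength.

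Without these pieces there is no concrete polynomial system for a sum-of-squares certificate to refute, and nothing in your plan shows where the threshold $n\ge 32$ would enter.
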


However, there do exist eigensimple graphs on fewer vertices. For example, the cycle $C_6$ with an edge joining two opposite vertices is eigensimple (see Figure~\ref{fig:eigensimple_6_vertex}). Its Laplacian spectrum is $\{0,1,2,3,3,5\}$, and that of its complement is $\{0,1,3,3,4,5\}$. Neither graph is a counterexample to Conjecture~\ref{conj:Snn}, since neither spectrum equals $S_{6,6} = \{0,1,2,3,4,5\}$.

\begin{figure}[htb]
    \centering
    \begin{tikzpicture}[scale=2]
        \node[draw, circle, fill=black, draw=black, inner sep=1.5pt] (v1) at (1,0) {\tiny v};
        \node[draw, circle, fill=black, draw=black, inner sep=1.5pt] (v2) at (0.5,0.866) {\tiny v};
        \node[draw, circle, fill=black, draw=black, inner sep=1.5pt] (v3) at (-0.5,0.866) {\tiny v};
        \node[draw, circle, fill=black, draw=black, inner sep=1.5pt] (v4) at (-1,0) {\tiny v};
        \node[draw, circle, fill=black, draw=black, inner sep=1.5pt] (v5) at (-0.5,-0.866) {\tiny v};
        \node[draw, circle, fill=black, draw=black, inner sep=1.5pt] (v6) at (0.5,-0.866) {\tiny v};
        
        \draw (v1) -- (v2) -- (v3) -- (v4) -- (v5) -- (v6) -- (v1) -- (v4);
    \end{tikzpicture} \qquad \qquad \qquad 
    \begin{tikzpicture}[scale=2]
        \node[draw, circle, fill=black, draw=black, inner sep=1.5pt] (v1) at (1,0) {\tiny v};
        \node[draw, circle, fill=black, draw=black, inner sep=1.5pt] (v2) at (0.5,0.866) {\tiny v};
        \node[draw, circle, fill=black, draw=black, inner sep=1.5pt] (v3) at (-0.5,0.866) {\tiny v};
        \node[draw, circle, fill=black, draw=black, inner sep=1.5pt] (v4) at (-1,0) {\tiny v};
        \node[draw, circle, fill=black, draw=black, inner sep=1.5pt] (v5) at (-0.5,-0.866) {\tiny v};
        \node[draw, circle, fill=black, draw=black, inner sep=1.5pt] (v6) at (0.5,-0.866) {\tiny v};
        
        \draw (v3) -- (v5) -- (v1) -- (v3) -- (v6);
        \draw (v2) -- (v4) -- (v6) -- (v2) -- (v5);
    \end{tikzpicture}
    \caption{An eigensimple graph on $n = 6$ vertices and its complement (which is also eigensimple).}\label{fig:eigensimple_6_vertex}
\end{figure}

Our proof of Theorem~\ref{thm:no_large_eigensimple} is based on comparing unit $1$-eigenvectors $\x$ and $\y$ of $G$ and $\Gc$, respectively. We first show (in Lemma~\ref{lem:extremal_path}) that the vertices where these vectors attain their largest and smallest entries can be chosen to form an induced path $p - q - r - s$ in $G$, where
\begin{align*}
    x_p & = \max(\x), & y_q & = \max(\y), & y_r & = \min(\y), & x_s & = \min(\x).
\end{align*}
Furthermore, $\{q,r\}$ dominates $G$ and $\{p,s\}$ dominates $\Gc$. Figure~\ref{fig:eigensimple_4_path_example} illustrates these properties for the eigensimple graph from Figure~\ref{fig:eigensimple_6_vertex}.

\begin{figure}[htb]
    \centering
    \begin{tikzpicture}[scale=2]
        \node[draw, circle, fill=black, draw=black, inner sep=1.5pt] (v1) at (1,0) {\color{white}$r$};
        \node[draw, circle, fill=black, draw=black, inner sep=1.5pt] (v2) at (0.5,0.866) {\tiny v};
        \node[draw, circle, fill=black, inner sep=1.5pt] (v3) at (-0.5,0.866) {\color{white}$p$};
        \node[draw, circle, fill=black, draw=black, inner sep=1.5pt] (v4) at (-1,0) {\color{white}$q$};
        \node[draw, circle, fill=black, draw=black, inner sep=1.5pt] (v5) at (-0.5,-0.866) {\tiny v};
        \node[draw, circle, fill=black, draw=black, inner sep=1.5pt] (v6) at (0.5,-0.866) {\color{white}$s$};
        
        \draw[black!50!gray] (v1) -- (v2) -- (v3) -- (v4) -- (v5) -- (v6) -- (v1) -- (v4);
        \draw[line width=2.4pt] (v3) -- (v4) -- (v1) -- (v6);
    \end{tikzpicture} \qquad \qquad \qquad 
    \begin{tikzpicture}[scale=2]
        \node[draw, circle, fill=black, draw=black, inner sep=1.5pt] (v1) at (1,0) {\color{white}$r$};
        \node[draw, circle, fill=black, draw=black, inner sep=1.5pt] (v2) at (0.5,0.866) {\tiny v};
        \node[draw, circle, fill=black, inner sep=1.5pt] (v3) at (-0.5,0.866) {\color{white}$p$};
        \node[draw, circle, fill=black, draw=black, inner sep=1.5pt] (v4) at (-1,0) {\color{white}$q$};
        \node[draw, circle, fill=black, draw=black, inner sep=1.5pt] (v5) at (-0.5,-0.866) {\tiny v};
        \node[draw, circle, fill=black, draw=black, inner sep=1.5pt] (v6) at (0.5,-0.866) {\color{white}$s$};
        
        \draw[black!50!gray] (v3) -- (v5) -- (v1) -- (v3) -- (v6);
        \draw[black!50!gray] (v2) -- (v4) -- (v6) -- (v2) -- (v5);
        \draw[line width=2.4pt] (v4) -- (v6) -- (v3) -- (v1);
    \end{tikzpicture}
    \caption{Four vertices ($p$, $q$, $r$, and $s$) that induce a path on four vertices in the graph $G$ from Figure~\ref{fig:eigensimple_6_vertex} and its complement. In $G$ (left) $\{q,r\}$ is a dominating set, while in $\Gc$ (right) $\{p,s\}$ is a dominating set. If we index their Laplacian matrices by vertices in the order $p$, $q$, $r$, $s$, top-right, bottom-left, then (up to sign change) the unique unit $1$-eigenvector of $G$ is $\x = (1, 0, 0, -1, 1, -1)/2$, which has $x_p = \max(\x) = 1/2$ and $x_s = \min(\x) = -1/2$. Similarly, the unique (up to sign change) unit $1$-eigenvector of $\Gc$ is $\y = (-1, 2, -2, 1, 1, -1)/\sqrt{12}$, which has $y_q = \max(\y) = 1/\sqrt{3}$ and $y_r = \min(\y) = -1/\sqrt{3}$. }\label{fig:eigensimple_4_path_example}
\end{figure}

We then (in Lemma~\ref{lem:max_513_bound}) bound the sizes of the extreme entries of each $1$-eigenvector of an eigensimple graph on $n \geq 32$ vertices. The main bound of this type is
\begin{align*}
    \frac{\max(\x)^2}{\|\x_+\|^2} \leq \frac{5}{13} \qquad \text{and} \qquad \frac{\min(\x)^2}{\|\x_-\|^2} \leq \frac{5}{13},
\end{align*}
and the same inequalities hold for $\y$ (we do not claim that this bound is tight, but it is sufficient for our purposes). We similarly prove an inequality of the form
\[
    \range(\x)^2 \leq f(\spread(\x))
\]
in Lemma~\ref{lem:range_bound} (where $f : \R \rightarrow \R$ is an explicit function). Taken together, these inequalities place tight restrictions on how spread out the entries of a $1$-eigenvector of an eigensimple graph can be.

We then proceed in Section~\ref{sec:two_eigenvectors} to develop inequalities that compare the entries of a unit $1$-eigenvector $\x$ of a graph to the entries of a unit $1$-eigenvector $\y$ of its complement. In particular, the entries of these $1$-eigenvectors corresponding to the four distinguished vertices $p$, $q$, $r$, and $s$ from earlier are related to each other, which lets us derive an inequality of the form
\begin{align*}
    \range(\x)^2+\range(\y)^2-\range(\x)^2\range(\y)^2g(\spread(\x),\spread(\y)) \geq 1
\end{align*}
(where $g : \R^2 \rightarrow \R$ is an explicit function; see Lemma~\ref{lem:quartic_inequalities}). We also develop a few other (somewhat more technical) inequalities of this nature that relate the quantities $\range(\x)$, $\range(\y)$, $\spread(\x)$, and $\spread(\y)$.

Some of these inequalities are strongest when $p$, $q$, $r$, and/or $s$ have large degree, while others are strongest when they have small degree. By splitting into a few cases depending on their degrees, we are able to derive a contradiction every time, showing that the eigensimple graph that we started with does not actually exist. More specifically, after splitting into cases according to the vertex degrees, it suffices to prove positivity of five biquadratic polynomials to get the desired contradiction. We give positive definite matrix representations of these polynomials in Appendix~\ref{app:certificates}, which completes the proof.

\subsection{Basic properties of eigensimple graphs}\label{sec:eigensimple_basic}

We start with some basic properties of eigensimple graphs and their unit $1$-eigenvectors.

\begin{lemma}\label{lem:eigensimple_properties}
    Let $G$ be an eigensimple graph on $n \geq 3$ vertices, let $v \in V(G)$, and let $\x, \y \in \R^n$ be unit $1$-eigenvectors of $G$ and $\Gc$, respectively. Then:
    \begin{itemize}
        \item[(a)] $\x \perp \y$.
        
        \item[(b)] $G$ does not have a cut vertex.
        
        \item[(c)] $2 \leq d(v) \leq n-3$.
        
        \item[(d)] The matrix $L(G) - 2I + \frac{2}{n}J + \x\x^T$ is positive semidefinite.
        
        \item[(e)] $\displaystyle d(u)x_u = -\sum_{\substack{w \in V(G) \\ w \notin N(u) \cup \{u\}}}x_w$ \ for every $u \in V(G)$.
    \end{itemize}
\end{lemma}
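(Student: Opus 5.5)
Each part comes from the relation $L(\Gc) = nI - J - L(G)$ together with the spectral conditions in Definition~\ref{defn:eigensimple}. We write $L := L(G)$ and $\overline{L} := L(\Gc)$.

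\textbf{(a)} Since $\lambda_2(G) = 1 > 0$, the vector $\x$ is orthogonal to $\one$. Hence $J\x = \0$ and
\[
    \overline{L}\x = (n-1)\x.
\]
So $\x$ is an eigenvector of $\overline{L}$ for the eigenvalue $n-1$, while $\y$ is an eigenvector of the same symmetric matrix for the eigenvalue $1$. Since $n \geq 3$, these eigenvalues differ, and so $\x \perp \y$.

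\textbf{(c)} Suppose some $v$ has $d(v) \geq n-2$. If $d(v) = n-1$, then $v$ is isolated in $\Gc$, so $\Gc$ is disconnected and $\lambda_2(\Gc) = 0$, a contradiction. If $d(v) = n-2$, then $v$ is a leaf of $\Gc$. Here we use the standard bound $\lambda_2(H) \leq \kappa_v(H)$ (algebraic connectivity is at most vertex connectivity) for non-complete graphs $H$. Applying it to $\Gc$, which is connected with $\lambda_2(\Gc) = 1$, we get:
\begin{itemize}
    \item if the neighbour $u$ of $v$ in $\Gc$ is a cut vertex of $\Gc$, then $\kappa_v(\Gc) = 1$, which is not yet a contradiction;
    \item so we combine this with part~(b) applied to the eigensimple graph $\Gc$: since $n \geq 3$ and $v$ is a leaf, its neighbour is a cut vertex of $\Gc$, contradicting~(b).
\end{itemize}
The case $d(v) \leq 1$ follows by complementation, since $\Gc$ is also eigensimple.

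\textbf{(b)} This is the main point. Suppose $c$ is a cut vertex of $G$, and let $C_1, \dots, C_k$ (with $k \geq 2$) be the components of $G - c$. I plan to build two orthogonal test vectors, each orthogonal to $\one$, supported on distinct pieces, and each with Rayleigh quotient less than $2$. That would give $\lambda_3(G) < 2$, a contradiction.

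Concretely, for a component $C$ define $\z_C$ to be $\one_C$ minus a suitable multiple of $\e_c$. Only the edges between $C$ and $c$ contribute to its Rayleigh quotient. An easier route is to pass to the complement. In $\Gc$, the vertex $c$ is adjacent to nothing in the components it is not adjacent to in $G$. More usefully, every vertex of $C_1$ is adjacent in $\Gc$ to every vertex of $C_2 \cup \dots \cup C_k$. So $\Gc - c$ contains a complete bipartite spanning subgraph $K_{a,b}$ with $a + b = n-1$. By eigenvalue interlacing and monotonicity under adding edges, $\lambda_2(\Gc - c) \geq \min(a,b)$. Moreover, removing a vertex lowers $\lambda_2$ by at most $1$, so
\[
    \lambda_2(\Gc) \geq \lambda_2(\Gc - c) \geq \min(a,b).
\]
This alone does not contradict $\lambda_2(\Gc) = 1$ when $\min(a,b) = 1$. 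If $\min(a,b) \geq 2$ we are done.

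If some $C_i$ is a single vertex $u$, then $u$ has $d(u) = 1$ with $c$ as its only neighbour. Then $d(u) \leq 1$, which is excluded by the degree argument in (c); that argument uses only $\lambda_2(\Gc) > 0$ and an isolated vertex or leaf. I would therefore order things as follows:
\begin{itemize}
    \item first prove $d(v) \neq 0, n-1$ directly from connectivity;
    \item then exclude leaves (degree $1$) using the test vector $\e_u - \e_c$ type argument against $\lambda_3 \geq 2$;
    \item then finish (b) and (c).
\end{itemize}
Getting this ordering right is the obstacle I expect.

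\textbf{(d)} Let $\x^{(k)}$ denote orthonormal eigenvectors of $L$ with eigenvalues $\lambda_k$, where $\x^{(1)} = \one/\sqrt n$ and $\x^{(2)} = \x$. Then
\[
    L - 2I + \tfrac{2}{n}J + \x\x^T
\]
has the same eigenvectors, with eigenvalues
\begin{itemize}
    \item $0 - 2 + 2 = 0$ on $\one$;
    \item $1 - 2 + 1 = 0$ on $\x$;
    \item $\lambda_k - 2 \geq 0$ for $k \geq 3$, since $\lambda_3(G) \geq 2$.
\end{itemize}
All eigenvalues are nonnegative, so the matrix is positive semidefinite.

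\textbf{(e)} Rewrite the eigenvector equation~\eqref{eq:eigenvector_equation} as
\[
    \sum_{w \in N(u)} x_w = (d(u) - 1)x_u.
\]
Since $\sum_w x_w = 0$,
\[
    \sum_{w \notin N(u) \cup \{u\}} x_w = -x_u - (d(u)-1)x_u = -d(u)x_u,
\]
which is the claimed identity.
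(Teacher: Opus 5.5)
\textbf{Verdict.} Parts (a), (d), and (e) are correct and essentially the paper's arguments. Your derivation of (c) from (b), via the neighbour of a leaf being a cut vertex when $n \geq 3$, is also how the paper does it.

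\textbf{The gap is in (b)}, and (c) depends on it. The step $\lambda_2(\Gc) \geq \lambda_2(\Gc - c)$ is false. The fact you invoke, that deleting a vertex lowers $\lambda_2$ by at most $1$, says $\lambda_2(\Gc - c) \geq \lambda_2(\Gc) - 1$. That is an \emph{upper} bound on $\lambda_2(\Gc)$, not a lower bound. No lower bound of this kind can hold in your setting. If the cut vertex $c$ has exactly one non-neighbour in $G$, then $c$ is a leaf of $\Gc$, and $\lambda_2(\Gc) \leq 1$ no matter how large $\min(a,b)$ is. So the complement route yields nothing.

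The remaining singleton-component (leaf) case is left to an argument you did not carry out. As you order things, that argument would also be circular, because your exclusion of leaves in (c) itself invokes (b).

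\textbf{What is missing} is a step that turns ``$G$ has a cut vertex'' plus ``$\lambda_2(G) = 1$'' into a structural conclusion.

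The paper cites the theorem of \cite{KMNS02}: a non-complete graph whose algebraic connectivity equals its vertex connectivity is a join. Here both equal $1$, so $G$ is a join and $\Gc$ is disconnected, a contradiction.

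Your first idea, test vectors built from the components of $G - c$, can also be made to work directly if you aim it at $\lambda_2$ rather than $\lambda_3$. Split the components of $G - c$ into two nonempty groups $A$ and $B$, and set $\z := |B|\one_A - |A|\one_B$, with $z_c = 0$. Then $\z \perp \one$, and only edges at $c$ contribute to $\z^T L \z$. This gives
\[
    1 = \lambda_2(G) \leq \frac{\z^T L \z}{\z^T \z} = \frac{|B|^2\,|N(c)\cap A| + |A|^2\,|N(c)\cap B|}{|A|\,|B|^2 + |B|\,|A|^2} \leq 1.
\]
Equality forces $c$ to be adjacent to every other vertex. Then $c$ is isolated in $\Gc$, contradicting $\lambda_2(\Gc) = 1$. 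This argument covers leaves too, so no special ordering of (b) and (c) is needed.
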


\begin{proof}
    Part~(a) follows from the fact that $\y$ corresponds to the eigenvalue $1$ of $L(\Gc)$ exactly when it corresponds to the eigenvalue $n-1$ of $L(G)$. Since $\x$ and $\y$ correspond to distinct eigenvalues of $L(G)$, they are perpendicular to each other.
    
    To prove part~(b), notice that if $G$ had a cut vertex, then its vertex connectivity and algebraic connectivity would both equal $1$. By \cite[Theorem~2.1]{KMNS02}, $G$ would then be a join of two nonempty graphs, contradicting the connectedness of $\Gc$. Now applying part~(b) to $\Gc$ shows that it also has no cut vertex. Since both graphs are connected and $n \geq 3$, neither can have a vertex of degree $1$, which establishes part~(c).

    To prove part~(d), choose a real orthonormal eigenbasis $\one/\sqrt{n},\x,\vv_3,\ldots,\vv_n$ of $L(G)$. Then
    \begin{align*}
        L(G)-2I+\frac{2}{n}J+\x\x^T = \sum_{j=3}^n(\lambda_j(G)-2)\vv_j\vv_j^T.
    \end{align*}
    Since $\lambda_j(G) \geq 2$ for all $j \geq 3$, this matrix is positive semidefinite.

    Finally, the fact that $\x \perp \one$ tells us that the entries of $\x$ sum to $0$. Combining this with the eigenvector equation (Equation~\eqref{eq:eigenvector_equation}) at $u$ gives
    \begin{align*}
        (d(u)-1)x_u = \sum_{w\in N(u)}x_w = -x_u-\sum_{\substack{w\in V(G)\\w\notin N(u)\cup\{u\}}}x_w.
    \end{align*}
    Adding $x_u$ to both sides proves part~(e).
\end{proof}

\subsection{Extreme eigenvector entries}\label{sec:vertices_extreme}

We can now prove the main result of this subsection, which is a result that tells us where the maximal and minimal entries of the (unique up to scalar multiplication) $1$-eigenvectors of $G$ and $\Gc$ are located. In particular, the following lemma shows that these four entries correspond to an induced path on four vertices of $G$ and of $\Gc$ that is made up of a dominating set of $G$ and a dominating set of $\Gc$:

\begin{lemma}\label{lem:extremal_path}
    Let $G$ be an eigensimple graph on $n \geq 4$ vertices, and let $\x,\y \in \R^n$ be unit $1$-eigenvectors of $L(G)$ and $L(\Gc)$, respectively. The sign of $\y$ can be chosen so that there exist distinct $p,q,r,s \in V(G)$ such that $p - q - r - s$ is an induced path of $G$ with
    \begin{align*}
        x_p = \max(\x), \quad y_q =\max(\y), \quad y_r = \min(\y), \ \ \text{and} \ \ x_s = \min(\x).
    \end{align*}
    Furthermore, $\{q,r\}$ dominates $G$ and $\{p,s\}$ dominates $\Gc$.
\end{lemma}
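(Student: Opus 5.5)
The plan is to first prove a ``one-graph'' statement about the extreme entries of a single $1$-eigenvector, apply it to both $G$ and $\Gc$ (both eigensimple), and then glue the two conclusions together using $\x \perp \y$ (Lemma~\ref{lem:eigensimple_properties}(a)) and the freedom to flip the sign of $\y$. Since $\lambda_2(G)=1<2\le\lambda_3(G)$, the eigenvalue $1$ is simple, so $\x$ is unique up to sign. Likewise $\y$ is a multiple of the eigenvector of $L(G)$ for the eigenvalue $n-1$, and satisfies
\[
(n-1)y_u=d(u)y_u-\sum_{w\in N(u)}y_w,
\]
which is equivalent to the eigenvector equation of $\y$ in $\Gc$.

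\textbf{Step 1 (one graph).} Let $p,s$ be vertices with $x_p=\max(\x)>0$ and $x_s=\min(\x)<0$. I would show two things.
\begin{itemize}
\item[(i)] $p\not\sim s$ in $G$.
\item[(ii)] No vertex is adjacent in $G$ to both $p$ and $s$; this says exactly that $\{p,s\}$ dominates $\Gc$.
\end{itemize}
The tools are the eigenvector equation~\eqref{eq:eigenvector_equation} at $p$ and at $s$, whose summands $x_p-x_w\ge0$ and $x_s-x_w\le0$ are all one-signed, together with Lemma~\ref{lem:eigensimple_properties}(e). The key input is the positive semidefiniteness of $M:=L(G)-2I+\frac2nJ+\x\x^T$ from Lemma~\ref{lem:eigensimple_properties}(d). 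I would test $M$ on vectors supported on $\{p,s,w\}$, chosen orthogonal to $\x$ (for instance $x_s\e_p-x_p\e_s$ corrected by a multiple of $\e_w$), so that the $\x\x^T$ term vanishes. The conditions $a_{p,s}=1$, or $a_{p,w}=a_{s,w}=1$, then make the Laplacian form too small compared to $2\|\z\|^2-\frac2n(\one^T\z)^2$, giving a contradiction. Applying the same argument to $\Gc$ with $\y$ gives the analogous facts there. If $q,r$ are the argmax and argmin of $\y$, then $q\sim r$ in $G$ and $\{q,r\}$ dominates $G$.

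\textbf{Step 2 (gluing).} Since $\{q,r\}$ dominates $G$, the vertex $p$ is adjacent to $q$ or to $r$, and likewise for $s$. Since $\{p,s\}$ dominates $\Gc$, each of $q$ and $r$ is nonadjacent in $G$ to $p$ or to $s$. The first task is to show $\{p,s\}\cap\{q,r\}=\emptyset$; for this I would combine the two eigenvector equations at a putative common vertex with $\x\perp\y$.

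It then remains to exclude the configuration where both $p$ and $s$ are adjacent to the same one of $q,r$. That configuration is already impossible, because it would give a common $G$-neighbour of $p$ and $s$, contradicting (ii). Hence $p$ is adjacent to exactly one of $q,r$ and $s$ to the other. Similarly, $q$ and $r$ are each adjacent to exactly one of $p,s$, and these adjacencies are complementary. After flipping the sign of $\y$ if necessary (which swaps the roles of $q$ and $r$), we obtain $p\sim q$, $r\sim s$, $p\not\sim r$ and $q\not\sim s$. Together with $q\sim r$ and $p\not\sim s$, this is exactly the induced path $p-q-r-s$.

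\textbf{Main obstacle.} I expect Step 1 to be the hardest part, specifically choosing test vectors for which the PSD inequality actually contradicts a common neighbour of $p$ and $s$. The $\frac2nJ$ correction and the unknown degrees make the naive two-vertex test vector too weak. If the PSD test fails, my fallback is a nodal-domain style argument. For $\lambda_2=1$ with $\lambda_3\ge2$, consider the vector obtained by modifying $\x$ at a common neighbour $w$ of $p$ and $s$. This vector, combined with $\x$, spans a $2$-dimensional subspace orthogonal to $\one$ on which the Rayleigh quotient is below $2$, contradicting $\lambda_3\ge2$ via Courant--Fischer.
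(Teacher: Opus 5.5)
There is a genuine gap in Step~1(ii). You assert that for \emph{any} vertices $p,s$ with $x_p=\max(\x)$ and $x_s=\min(\x)$, no vertex is adjacent to both. This is false when an extremum is attained more than once, so neither a PSD test vector nor your Courant--Fischer fallback can prove it. The paper's own $6$-vertex eigensimple example is a counterexample. With $\x=(1,0,0,-1,1,-1)/2$ (ordered $p,q,r,s$, top-right, bottom-left), the top-right vertex is also a maximizer, and it shares the neighbour $r$ (where $x_r=0$) with the minimizer $s$. This configuration is exactly what the one-signed identity
\[
x_p-x_s=\sum_{v\in N(p)}(x_p-x_v)+\sum_{v\in N(s)}(x_v-x_s)
\]
permits. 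The identity rules out $p\sim s$, which is your (i). A common neighbour $w$, however, merely saturates it. This forces $w$ to be the unique common neighbour with $x_w=0$, all other neighbours of $p$ to be maximizers, and all other neighbours of $s$ to have value $\min(\x)$.

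The missing idea is a selection step: fix $s$ and show that \emph{some} maximizer has no common neighbour with it. In the saturated configuration, $w$ is the only neighbour of $s$ not at the minimum value, so $w$ does not depend on $p$. Suppose every maximizer were bad. Then every maximizer would be adjacent to $w$, and every neighbour of a maximizer would be a maximizer or $w$. Deleting $w$ would then separate the maximizers from $s$, making $w$ a cut vertex, contrary to Lemma~\ref{lem:eigensimple_properties}(b). Doing this for $\x$ in $G$ and for $\y$ in $\Gc$ produces suitable pairs $(p,s)$ and $(q,r)$. After that, your Step~2 works as written and coincides with the paper's gluing argument. Distinctness also needs no appeal to $\x\perp\y$. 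Since $q\sim r$ and $\{q,r\}$ dominates $G$, every vertex lies within distance $2$ of both $q$ and $r$, while $p$ and $s$ are at distance at least $3$ from each other.
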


\begin{proof}
    We know from \cite{DLC10} that both $G$ and $\Gc$ have diameter (i.e., maximum distance between any pair of vertices) equal to $3$.

    We first show that a maximum and a minimum of each eigenvector can be chosen at distance exactly $3$ in its corresponding graph. Let $H \in \{G,\Gc\}$ and let $\z$ be a unit $1$-eigenvector of $H$. Since $\z \perp \one$ and $\z$ is nonzero, we have $\max(\z) > 0$ and $\min(\z) < 0$. Fix $s$ with $z_s = \min(\z)$, and let $M := \{v \in V(H) : z_v = \max(\z)\}$. For every $p \in M$, the eigenvector equation (Equation~\eqref{eq:eigenvector_equation}) gives
    \begin{align}\label{eq:eigenreduction_eigenvector}
        z_p-z_s = \sum_{v \in N_H(p)}(z_p-z_v)+\sum_{v \in N_H(s)}(z_v-z_s).
    \end{align}
    All summands on the right-hand side are nonnegative. If $p$ and $s$ were adjacent, their edge would contribute $z_p-z_s$ to each sum, already making the right-hand side larger than the left-hand side. We thus conclude that they are a distance of at least $2$ from each other.

    Suppose (for the sake of establishing a contradiction) that every vertex $p \in M$ is at distance exactly $2$ from $s$. Every common neighbor of $p$ and $s$ contributes $z_p-z_s$ to the right-hand side of Equation~\eqref{eq:eigenreduction_eigenvector}. There is therefore exactly one common neighbor, say $w$, and all other summands vanish. In particular, every other neighbor of $p$ belongs to $M$, and every other neighbor $v$ of $s$ attains $z_v = z_s = \min(\z) < 0$. The eigenvector equation at $p$ reduces to $z_p = z_p - z_w$, so $z_w = 0$.
    
    Since $w$ is the unique zero-valued neighbor of $s$, and $s$ does not depend on the choice of $p \in M$, $w$ does not depend on the choice of $p$ either. We have thus shown that every vertex of $M$ is adjacent to $w$, and there are no edges between $M$ and $V(H) \setminus (M\cup\{w\})$. Since the latter set contains $s$, deleting $w$ disconnects $H$, contradicting Lemma~\ref{lem:eigensimple_properties}(b). Thus some $p \in M$ has distance $3$ from $s$, proving the claim.

    Now choose $p$ and $s$ where $\x$ attains its maximum and minimum, respectively, at distance $3$ in $G$, and choose extrema $q$ and $r$ of $\y$ at distance $3$ in $\Gc$. Since $q$ and $r$ are not adjacent and have no common neighbor in $\Gc$, they are adjacent in $G$ and $\{q,r\}$ dominates $G$. The same argument shows that $\{p,s\}$ dominates $\Gc$. Furthermore, $p$, $q$, $r$, and $s$ are all distinct because, for example, the fact that $\{q,r\}$ dominates $G$ tells us that $q$ and $r$ are at a distance of at most $2$ from every vertex in $G$, so neither of them can equal $p$ or $s$ (which are a distance of $3$ from some vertex in $G$).

    Since $\{q,r\}$ dominates $G$, each of $p$ and $s$ is adjacent to at least one of $q$ or $r$. Since $p$ and $s$ are at distance $3$ from each other in $G$, they cannot have a common neighbor, so they must each be adjacent to \emph{exactly} one of $q$ or $r$. Interchanging $q$ and $r$, if necessary, results in $p-q-r-s$ being an induced path, and then choosing the sign of $\y$ so that $y_q = \max(\y)$ and $y_r = \min(\y)$ completes the proof.
\end{proof}

\subsection{Bounds on eigenvector coordinates}\label{sec:amplitudes}

We now show that the largest and smallest entries of a $1$-eigenvector cannot be too large relative to the other entries with the same sign.

\subsubsection{Bounding the extreme coordinates}\label{sec:relative_extreme}

We can now state our main bounds on the extreme entries. We first define a function that will be used in the proof of the bound and also in some later results.

\begin{definition}
    Define the function $h : \{2,3,4,\ldots\} \rightarrow (0,5/13]$ by
    \begin{align*}
        h(x) := \min\left\{\frac{5}{13}, \frac{x}{x^2-x+1}\right\}.
    \end{align*}
\end{definition}

The function $h$ is nonincreasing, with $h(2) = h(3) = 5/13$ and $h(d) = d/(d^2-d+1)$ for every integer $d \geq 4$.

\begin{lemma}\label{lem:max_513_bound}
    Let $G$ be an eigensimple graph on $n \geq 32$ vertices with $1$-eigenvector $\x \in \R^n$. Then
    \begin{align*}
        \frac{\max(\x)^2}{\|\x_+\|^2} \leq \frac{5}{13} \quad \text{and} \quad \frac{\min(\x)^2}{\|\x_-\|^2} \leq \frac{5}{13}.
    \end{align*}
\end{lemma}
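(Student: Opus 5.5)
The plan is to treat the maximum only; the statement for $\min(\x)$ then follows by applying the same argument to $-\x$, which is also a $1$-eigenvector. Both ratios are scale invariant, so we may assume $\x$ is a unit vector. The target is in fact the sharper bound $\max(\x)^2/\|\x_+\|^2 \le h(d(p))$, where $p$ is a vertex with $x_p = \max(\x)$. The first step is a one-neighbourhood Cauchy--Schwarz estimate, which settles every case with $d(p)\ge 4$.

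Write $d := d(p)$; by Lemma~\ref{lem:eigensimple_properties}(c) we have $d \ge 2$. The eigenvector equation~\eqref{eq:eigenvector_equation} at $p$ gives
\[
\sum_{w\in N(p)} x_w = (d-1)x_p .
\]
Replacing each $x_w$ by its positive part only increases the left-hand side. Cauchy--Schwarz over the $d$ neighbours then gives
\[
\sum_{w\in N(p)} (x_w)_+^2 \ \ge\ \frac{(d-1)^2x_p^2}{d}.
\]
Adding the term $x_p^2$ yields
\[
\|\x_+\|^2 \ \ge\ x_p^2\Big(1+\tfrac{(d-1)^2}{d}\Big) \ =\ x_p^2\,\frac{d^2-d+1}{d},
\]
so $\max(\x)^2/\|\x_+\|^2 \le d/(d^2-d+1)$. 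For $d\ge 4$ this is at most $4/13<5/13$, which explains the shape of the function $h$.

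The main obstacle is the case $d(p)\in\{2,3\}$. Here the estimate above only gives $2/3$ and $3/7$, so more positive mass must be found beyond $N(p)$. For $d=2$, with $N(p)=\{w_1,w_2\}$, we have $x_{w_1}+x_{w_2}=x_p$ and each $x_{w_i}\le x_p$. I would apply the eigenvector equation at each $w_i$, which reads $x_{w_i} = (x_{w_i}-x_p) + \sum_{v\in N(w_i)\setminus\{p\}}(x_{w_i}-x_v)$. This controls how much positive mass must sit on the second neighbourhood. The constraint is combined with Lemma~\ref{lem:eigensimple_properties}(b) (no cut vertex, so $w_1,w_2$ have further neighbours) and with $x_v\le x_p$ throughout. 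The case $d=3$ is handled the same way.

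If this local counting is still insufficient, the configurations where $\x_+$ is concentrated near $p$ must be ruled out another way. For this I would use Lemma~\ref{lem:eigensimple_properties}(d): the matrix $L - 2I + \frac{2}{n}J + \x\x^T$ is positive semidefinite. The test vector would be $u = a\,\e_p + b\sum_{w\in N(p)}\e_w$, possibly with a third block on the second neighbourhood. Its quadratic form is $d(a-b)^2 + b^2 e_{\mathrm{out}} - 2(a^2+db^2) + \frac{2}{n}(a+db)^2 + (\x^Tu)^2 \ge 0$, where $e_{\mathrm{out}}$ is the number of edges leaving $N[p]$ from $N(p)$. This is where $n\ge 32$ would enter, by making the $\frac{2}{n}$ term small enough.

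To finish, I would optimise over $a,b$ and compare the resulting constraint with the eigenvector relations. This should show that any configuration with ratio above $5/13$ either violates positive semidefiniteness or forces a cut vertex, a degree-$1$ vertex, or too few vertices. The case analysis and the choice of test vector for $d=2$ are the parts I expect to need the most care.
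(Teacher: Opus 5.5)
\textbf{Verdict: the $d(p)\ge 4$ case is correct, but there is a genuine gap for $d(p)\in\{2,3\}$.}

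Your argument for $d(p)\ge 4$ is exactly the paper's (Inequality~\eqref{eq:h_bound}), and reducing to the maximum via $-\x$ is fine. The cases $d(p)\in\{2,3\}$ carry the real difficulty (Appendix~\ref{app:513_bound_complete}), and there you give only a list of tools. Those tools cannot suffice. Everything you propose to use holds in graphs that violate the bound: eigenvector equations near $p$, $x_v\le x_p$, no cut vertex, degrees in $[2,n-3]$, and Lemma~\ref{lem:eigensimple_properties}(d), i.e.\ $\lambda_3(G)\ge 2$.

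Here is such a graph. For $k\ge 14$ take vertices $p,a,b,c_1,c_2$ and disjoint $k$-sets $B_1,B_2$. Make $\{p,a,b\}$ a triangle, join each of $a,b$ to both $c_1,c_2$, join $c_i$ to every vertex of $B_i$, and make $B_1\cup B_2$ a clique, so $n=2k+5\ge 33$. Set the entries to $1$ at $p$, $\tfrac12$ at $a,b$, $0$ at $c_1,c_2$, and $-1/k$ on $B_1\cup B_2$; this vector is a $1$-eigenvector. An equitable-partition computation gives the remaining eigenvalues as $0$, $5$, $k+1$, $2k+1$, $2k+2$, and the two roots of $\lambda^2-(k+7)\lambda+4k+10$, both of which exceed $2$. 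Consequently:
\begin{itemize}
\item $\lambda_2(G)=1$ is simple and $\lambda_3(G)\ge 2$, so the matrix in Lemma~\ref{lem:eigensimple_properties}(d) is positive semidefinite against every test vector you could pick;
\item $G$ has no cut vertex, and $p$ is at distance $3$ from the minimizers;
\item yet $\max(\x)^2/\|\x_+\|^2=2/3$, and enlarging $n$ does not help.
\end{itemize}
The only eigensimple condition that fails is $\lambda_2(\Gc)=n-\lambda_n(G)=3\ne 1$. An analogous graph does the same at a degree-$3$ maximum, with ratio $2/5>5/13$. Build a $K_4$ on $\{p,u_1,u_2,u_3\}$ with entries $1,1,\tfrac12,\tfrac12$, join $u_2,u_3$ to three zero-valued hubs $c_j$, and attach each $c_j$ to a $k$-set $B_j$ of a clique.

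The missing idea is to exploit the complement's $1$-eigenvector $\y$. The paper does this through grounded Laplacians of $\Gc[T]$ on the non-neighbourhood $T$ (Lemma~\ref{lem:grounded}).
\begin{itemize}
\item For $d(p)=3$: local counting handles the case where the largest neighbour $u$ of $p$ has at least three neighbours outside $N(p)\cup\{p\}$. Otherwise Lemma~\ref{lem:boundary} forces a neighbour of $p$ of degree at least $14$, and its eigenvector equation supplies the missing positive mass.
\item For $d(p)=2$, with $N(p)=\{a,b\}$ and $x_b\ge x_a$: local counting handles the case where $b$ has at least eight neighbours outside $\{p,a\}$. Otherwise Lemma~\ref{lem:degree_two_schur}, which also needs the distance-$3$ vertex $s$ from Lemma~\ref{lem:extremal_path}, shows $a\sim b$ with at most two common non-neighbours. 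Hence $d(a)\ge 31-d(b)$. Only then does Lemma~\ref{lem:eigensimple_properties}(d), with the specific test vector $\e_b+(d(b)-1)\e_p$, combine with a Cauchy--Schwarz lower bound on $\|\x\|^2$ to force $x_b$ close to $1$ and finish.
\end{itemize}
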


\begin{proof}
    Since $\x \perp \one$ and $\x \neq \0$, both $\|\x_+\|$ and $\|\x_-\|$ are strictly positive (and in particular, the divisions in the statement of the lemma make sense). Replacing $\x$ by $-\x$ interchanges the two inequalities, so it suffices to prove the inequality $13\max(\x)^2 \leq 5\|\x_+\|^2$. Furthermore, by dividing $\x$ by $\|\x_+\|$ (i.e., assuming that $\|\x_+\| = 1$), it suffices to prove $13\max(\x)^2 \leq 5$.

    Suppose that $v \in V(G)$ is such that $x_v > 0$. The eigenvector equation (Equation~\eqref{eq:eigenvector_equation}) at $v$ implies that $\sum_{w \in N(v)} x_w = (d(v)-1)x_v$. Omitting the nonpositive terms from this sum gives
    \[
        \sum_{\substack{w \in N(v)\\x_w > 0}} x_w \geq \big(d(v)-1\big)x_v.
    \]
    The Cauchy--Schwarz inequality, applied to the vector of positive neighbor entries and the all-ones vector of the same dimension, then gives
    \begin{align}\label{eq:h_bound}
        \big(d(v)-1\big)^2 x_v^2 \leq d(v)\big(\|\x_+\|^2 - x_v^2\big) = d(v)\big(1 - x_v^2\big), \quad \text{so} \quad x_v^2 \leq \frac{d(v)}{d(v)^2 - d(v) + 1}.
    \end{align}
    In particular, $x_v^2 \leq h(d(v)) \leq 5/13$ when $d(v) \geq 4$. That is, if $\max(\x) = x_v$ for some vertex $v$ with $d(v) \geq 4$, then $\max(\x)^2 \leq 5/13$.

    By Lemma~\ref{lem:eigensimple_properties}(c), the only remaining possibilities are that the maximum occurs at a vertex of degree $2$ or $3$. These two cases are quite long and technical, so we defer their proof to Appendix~\ref{app:513_bound_complete}.
\end{proof}

\subsubsection{Bounding the range}\label{sec:range_bound}

For a unit $1$-eigenvector $\x$, the bounds from Lemma~\ref{lem:max_513_bound} on the largest and smallest entries of $\x$ can be combined because $\|\x_+\|^2 + \|\x_-\|^2 = 1$. It will be convenient for us to express these bounds via the following function:

\begin{definition}\label{defn:range_function_R}
    For $s \in (0,5/13]$ and $t \in [0,1/2]$, define
    \begin{align*}
        R(s,t) := \frac{t^2}{s}+\frac{13}{5}(1-t)^2.
    \end{align*}
\end{definition}

\begin{lemma}\label{lem:R_lower_bound}
    Let $R$ be as in Definition~\ref{defn:range_function_R}. Then $R(s,t) > 1$ for all $s \in (0,5/13]$ and $t \in [0,1/2]$.
\end{lemma}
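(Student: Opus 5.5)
Since $s \in (0,5/13]$ and $t^2 \geq 0$, we have $t^2/s \geq \frac{13}{5}t^2$, so it suffices to show that
\[
    \frac{13}{5}\big(t^2+(1-t)^2\big) > 1 \qquad \text{for all } t \in [0,1/2].
\]
The quadratic $t^2+(1-t)^2 = 2t^2-2t+1$ is minimized at $t=1/2$, where it equals $1/2$. It follows that $\frac{13}{5}\big(t^2+(1-t)^2\big) \geq \frac{13}{10} > 1$, which gives $R(s,t) \geq \frac{13}{10} > 1$.

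It is worth checking the edge cases. When $t=0$, the first term vanishes and $R(s,0) = 13/5 > 1$, which is consistent with the bound above. The bound $t^2/s \geq \frac{13}{5}t^2$ uses only $s \leq 5/13$ and $s > 0$, both of which are hypotheses. I do not expect any step to be an obstacle: the only point requiring care is to replace $1/s$ by its minimum value $13/5$ before minimizing over $t$. A more careful joint minimization is unnecessary, since the resulting crude bound already exceeds $1$ with room to spare.
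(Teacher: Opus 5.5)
Your proof is correct: bounding $1/s \geq 13/5$ and then using $t^2+(1-t)^2 \geq 1/2$ gives $R(s,t) \geq 13/10 > 1$. This is essentially the paper's argument. The paper combines both steps into one weighted Cauchy--Schwarz inequality, $1 = \big(t+(1-t)\big)^2 \leq (s+5/13)R(s,t)$, which gives $R(s,t) \geq 1/(s+5/13) \geq 13/10$, the same bound you obtain.
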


\begin{proof}
    The Cauchy--Schwarz inequality applied to the vectors $\big(\sqrt{s}, \sqrt{5/13}\big)$ and $\big(t/\sqrt{s}, (1-t)\sqrt{13/5}\big)$ gives
    \[
        1 = \big(t + (1-t)\big)^2 \leq (s+5/13)R(s,t)
    \]
    for all $s \in (0,5/13]$ and $t \in [0,1/2]$. Since $s + 5/13 \leq 10/13 < 1$, this implies $R(s,t) > 1$ and proves the lemma.
\end{proof}

\begin{lemma}\label{lem:range_bound}
    Let $G$ be an eigensimple graph on $n \geq 32$ vertices, let $\x \in \R^n$ be a unit $1$-eigenvector of $G$, let $p,s \in V(G)$ satisfy $x_p = \max(\x)$ and $x_s = \min(\x)$, and suppose $2 \leq j \leq \max\{d(p),d(s)\}$ is an integer. Then
    \begin{align*}
         \range(\x)^2 \leq \frac{1}{R(h(j),\spread(\x))}.
    \end{align*}
\end{lemma}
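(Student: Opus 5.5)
The plan is to combine the two extreme-entry bounds from Lemma~\ref{lem:max_513_bound} and Inequality~\eqref{eq:h_bound} with the normalization $\|\x_+\|^2+\|\x_-\|^2=\|\x\|^2=1$, and then use a short rearrangement argument to put the result in the form $R(h(j),\spread(\x))$.

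\emph{Reduction.} Replacing $\x$ by $-\x$ swaps the roles of $p$ and $s$ and leaves $\range(\x)$ and $\spread(\x)$ unchanged. So I may assume $d(p)\ge j$, i.e.\ $p$ is the extreme vertex of larger degree. Write $M:=\max(\x)=x_p>0$, $m:=-\min(\x)>0$, $\rho:=\range(\x)=M+m$ and $t:=\spread(\x)\in(0,1/2]$. Then $\{M,m\}=\{t\rho,(1-t)\rho\}$.

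\emph{Key bounds.} Inequality~\eqref{eq:h_bound} was derived for an arbitrary vertex with positive entry before any normalization. Applied at $v=p$, it gives $M^2\le \frac{d(p)}{d(p)^2-d(p)+1}\|\x_+\|^2$. Lemma~\ref{lem:max_513_bound} gives $M^2\le \frac{5}{13}\|\x_+\|^2$. Together these give $M^2\le h(d(p))\|\x_+\|^2\le h(j)\|\x_+\|^2$, because $h$ is nonincreasing and $d(p)\ge j$. For $s$, Lemma~\ref{lem:max_513_bound} gives $m^2\le\frac{5}{13}\|\x_-\|^2$. Dividing each bound by its constant and adding, using $\|\x_+\|^2+\|\x_-\|^2=1$, gives
\begin{align*}
    \frac{M^2}{h(j)}+\frac{13}{5}m^2\le 1 .
\end{align*}

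\emph{Matching $R$.} There are two cases, according to which extreme entry is the smaller one.
\begin{itemize}
    \item If $M=t\rho$ and $m=(1-t)\rho$, the displayed inequality is exactly $\rho^2R(h(j),t)\le1$.
    \item If $M=(1-t)\rho$ and $m=t\rho$, the inequality reads $\rho^2\big[(1-t)^2/h(j)+\tfrac{13}{5}t^2\big]\le1$. Comparing with $R$,
    \begin{align*}
        \Big[\frac{(1-t)^2}{h(j)}+\frac{13}{5}t^2\Big]-R(h(j),t)=\big((1-t)^2-t^2\big)\Big(\frac{1}{h(j)}-\frac{13}{5}\Big)\ge0,
    \end{align*}
    since $t\le 1/2$ and $h(j)\le 5/13$. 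Hence again $\rho^2R(h(j),t)\le 1$.
\end{itemize}
Lemma~\ref{lem:R_lower_bound} guarantees $R(h(j),t)>0$, so we may divide and obtain $\range(\x)^2\le 1/R(h(j),\spread(\x))$.

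The only delicate point is the second case: we must notice that putting the weaker constant $h(j)$ on the larger extreme entry still yields the bound. The rearrangement inequality above handles this.
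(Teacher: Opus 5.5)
Your proposal is correct and follows the paper's proof step for step. It uses the same reduction to $d(p)=\max\{d(p),d(s)\}$, the same summed bound $x_p^2/h(j)+\tfrac{13}{5}x_s^2\le 1$, and the same two-case comparison via $(1/h(j)-13/5)(x_p^2-x_s^2)\ge 0$. One small correction: the paper derives the degree bound at $p$ after normalizing $\|\x_+\|=1$, not before; it is homogeneous of degree $2$, so it still holds in the unnormalized form you use, which is also how the paper applies it.
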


\begin{proof}
    Since replacing $\x$ by $-\x$ and interchanging $p$ and $s$ preserves $\range(\x)$ and $\spread(\x)$, we can assume without loss of generality that $d(p)=\max\{d(p),d(s)\}$. Since $h$ is nonincreasing and $j\leq d(p)$, Inequality~\eqref{eq:h_bound} and Lemma~\ref{lem:max_513_bound} give
    \begin{align*}
        x_p^2 \leq h(d(p))\|\x_{+}\|^2 \leq h(j)\|\x_{+}\|^2 \qquad \text{and} \qquad x_s^2 \leq \frac{5}{13}\|\x_{-}\|^2.
    \end{align*}
    Dividing by $h(j)$ in the left inequality, by $5/13$ in the right inequality, and adding gives
    \begin{align}\label{ineq:p_h_135_bound}
        \frac{x_p^2}{h(j)} + \frac{13}{5}x_s^2 \leq \|\x_{+}\|^2 + \|\x_{-}\|^2 = 1.
    \end{align}

    We now develop a lower bound on the left-hand side in terms of $\range(\x)$ and $\spread(\x)$. Since $x_p > 0$, $x_s < 0$, and $\range(\x) = x_p-x_s$, we have
    \begin{align*}
        \range(\x)\spread(\x) = \min\{x_p,-x_s\} \qquad \text{and} \qquad \range(\x)\big(1-\spread(\x)\big) = \max\{x_p,-x_s\}.
    \end{align*}
    Squaring these identities and using the definition of $R$ (Definition~\ref{defn:range_function_R}) then gives
    \begin{align*}
        \range(\x)^2R\big(h(j),\spread(\x)\big) = \frac{\min\{x_p^2,x_s^2\}}{h(j)} + \frac{13}{5}\max\{x_p^2,x_s^2\}.
    \end{align*}

    If $x_p^2 \leq x_s^2$, this is exactly the quantity from Inequality~\eqref{ineq:p_h_135_bound} that we showed is no larger than $1$. If instead $x_p^2 > x_s^2$, then $h(j) \leq 5/13$ implies
    \begin{align*}
        \left(\frac{x_p^2}{h(j)}+\frac{13}{5}x_s^2\right) - \left(\frac{x_s^2}{h(j)}+\frac{13}{5}x_p^2\right) = \left(\frac{1}{h(j)}-\frac{13}{5}\right)(x_p^2-x_s^2)\geq0.
    \end{align*}
    We have thus shown in both cases that $\range(\x)^2 R\big(h(j),\spread(\x)\big) \leq 1$. Dividing both sides by $R(h(j),\spread(\x)) > 0$ proves the lemma.
\end{proof}

\subsection{Comparing the two eigenvectors}\label{sec:two_eigenvectors}

We now develop some bounds that let us compare a unit $1$-eigenvector $\x$ of an eigensimple graph $G$ to a unit $1$-eigenvector $\y$ of its complement.

\subsubsection{An identity involving all vertices}\label{sec:all_vertices_identity}

For a unit $1$-eigenvector $\x$ of $G$, we have
\begin{align*}
    \sum_{\{i,j\} \in E(G)}(x_i-x_j)^2 = \x^TL\x = 1.
\end{align*}
The same identity holds for a unit $1$-eigenvector $\y$ of $\Gc$. To relate these two eigenvectors to each other, we consider sums of products of the form $(x_i-x_j)^2(y_i-y_j)^2$. This idea comes from \cite{EK21}, where these products were used to compare the algebraic connectivity of a graph and its complement.

\begin{definition}\label{defn:quartic_functions}
    Let $G$ be an eigensimple graph on $n \geq 4$ vertices, and let $\x,\y \in \R^n$ be unit $1$-eigenvectors of $G$ and $\Gc$, respectively. Define
    \begin{align*}
        Q(G) & := \range(\x)^2 + \range(\y)^2 - \sum_{i < j}(x_i-x_j)^2(y_i-y_j)^2.
    \end{align*}
\end{definition}

In particular, notice that $Q(G)$ does not change if the signs of $\x$ and $\y$ are flipped, so it is independent of which unit $1$-eigenvectors $\x$ and $\y$ of $G$ and $\Gc$ are chosen (i.e., $Q(G)$ is well-defined). Furthermore, $Q(G) = Q(\Gc)$ and
\begin{align}\begin{split}\label{eq:QC_alternate}
    Q(G) & = \sum_{\{i,j\} \in E(G)}(x_i-x_j)^2\big(\range(\y)^2 - (y_i-y_j)^2\big) \\
    & \quad + \sum_{\{i,j\} \in E(\Gc)}\big(\range(\x)^2 - (x_i-x_j)^2\big)(y_i-y_j)^2,
\end{split}\end{align}
which shows that $Q(G) \geq 0$.

The following lemma provides our first connection between the $1$-eigenvectors $\x$ of $G$ and $\y$ of $\Gc$, by relating $Q(G)$ to the ranges of $\x$ and $\y$.

\begin{lemma}\label{lem:quartic_identities}
    Let $G$ be an eigensimple graph on $n \geq 4$ vertices, and let $\x,\y \in \R^n$ be unit $1$-eigenvectors of $G$ and $\Gc$, respectively. Then
    \begin{align*}
        1 = \range(\x)^2 + \range(\y)^2 - Q(G) - n\sum_{i \in V(G)}x_i^2y_i^2.
    \end{align*}
\end{lemma}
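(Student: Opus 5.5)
By the definition of $Q(G)$, the claim is equivalent to
\[
    \sum_{i<j}(x_i-x_j)^2(y_i-y_j)^2 = 1 - n\sum_{i}x_i^2y_i^2 .
\]
So the plan is simply to expand the quartic sum and evaluate each piece using the orthogonality relations available: $\x\perp\one$, $\y\perp\one$, $\|\x\|=\|\y\|=1$, and $\x\perp\y$ (Lemma~\ref{lem:eigensimple_properties}(a)).

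First, write the sum over unordered pairs as half the sum over all ordered pairs $(i,j)$; diagonal terms vanish. Then use
\[
    (x_i-x_j)(y_i-y_j) = (x_iy_i + x_jy_j) - (x_iy_j + x_jy_i).
\]
Squaring and summing over all $i,j$ gives three types of terms:
\begin{itemize}
    \item[(i)] $\sum_{i,j}(x_iy_i+x_jy_j)^2 = 2n\sum_i x_i^2y_i^2 + 2\big(\sum_i x_iy_i\big)^2 = 2n\sum_i x_i^2y_i^2$, since $\x\perp\y$.
    \item[(ii)] $\sum_{i,j}(x_iy_j+x_jy_i)^2 = 2\|\x\|^2\|\y\|^2 + 2\big(\x^T\y\big)^2 = 2$.
    \item[(iii)] The cross term $-2\sum_{i,j}(x_iy_i+x_jy_j)(x_iy_j+x_jy_i)$. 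Each of its four monomials contains a factor like $\sum_j y_j$ or $\sum_j x_j$, so it vanishes by $\x,\y\perp\one$.
\end{itemize}
Hence the ordered sum equals $2n\sum_i x_i^2y_i^2 + 2$, and the unordered sum is $n\sum_i x_i^2y_i^2 + 1$.

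This comes out with the opposite sign from the one I need. The resolution must lie in the squared factor $(x_i-x_j)^2(y_i-y_j)^2$: it equals $[(x_iy_i+x_jy_j)-(x_iy_j+x_jy_i)]^2$, and I should recheck which grouping matches the expansion. Substituting $a=x_iy_i+x_jy_j$ and $b=x_iy_j+x_jy_i$, the product is $(a-b)^2=a^2-2ab+b^2$. Term (ii) gives $+1$ after halving, and term (i) gives $+n\sum_i x_i^2y_i^2$. The main obstacle is therefore bookkeeping: either I recheck term (i) carefully (for example, whether $\sum_{i,j}2x_iy_ix_jy_j$ really vanishes and whether the diagonal contributions were counted correctly), or the intended sign convention in the statement differs. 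I would carefully redo the expansion monomial by monomial,
\[
    x_i^2y_i^2,\quad x_j^2y_j^2,\quad x_i^2y_j^2,\quad x_j^2y_i^2,\quad \text{and}\quad -2x_ix_jy_iy_j\ \text{(with multiplicity)},
\]
summing each over all $i,j$. This gives
\[
    n\textstyle\sum_i x_i^2y_i^2\cdot 2 + 2 - 8\big(\x^T\y\big)^2\cdot(\ldots),
\]
and I would then match the result against the stated identity. The ingredients needed are exactly the orthogonality facts listed above, and no graph structure beyond them should be required.
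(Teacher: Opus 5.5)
Your expansion in (i)--(iii) is correct, and it is the paper's own argument: the paper also expands the quartic sum and uses $\x,\y\perp\one$, $\|\x\|=\|\y\|=1$, and $\x^T\y=0$ to get $\sum_{i<j}(x_i-x_j)^2(y_i-y_j)^2 = 1+n\sum_i x_i^2y_i^2$. The error is in your very first line. Since $Q(G)=\range(\x)^2+\range(\y)^2-\sum_{i<j}(x_i-x_j)^2(y_i-y_j)^2$, we have $\range(\x)^2+\range(\y)^2-Q(G)=\sum_{i<j}(x_i-x_j)^2(y_i-y_j)^2$. The statement therefore reads
\[
    \sum_{i<j}(x_i-x_j)^2(y_i-y_j)^2 = 1 + n\sum_i x_i^2y_i^2,
\]
not $1-n\sum_i x_i^2y_i^2$ as you wrote. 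With that sign corrected, the identity you computed is exactly what is needed, and the proof is finished right after (iii).

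As submitted, though, the proposal never reaches a conclusion. It declares a sign mismatch and then speculates about bookkeeping errors or a different sign convention in the statement; neither exists. The re-expansion in your last paragraph is also off. In $(x_i^2-2x_ix_j+x_j^2)(y_i^2-2y_iy_j+y_j^2)$, the monomial $x_ix_jy_iy_j$ has coefficient $+4$, not $-2$, and summing it over all $i,j$ gives $4(\x^T\y)^2=0$. The remaining monomials give $2n\sum_i x_i^2y_i^2+2\|\x\|^2\|\y\|^2$, plus terms that carry a factor $\sum_j x_j$ or $\sum_j y_j$ and hence vanish. So there is no ``$-8(\x^T\y)^2(\ldots)$'' term and nothing in the computation to repair. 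Correct the reduction, delete the final paragraph, and you have a complete proof identical to the paper's.
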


\begin{proof}
    The proof is by direct calculation. Since $\x,\y \perp \one$, $\|\x\| = \|\y\| = 1$, and $\x^T\y = 0$, we have
    \begin{align*}
        \sum_{i<j}(x_i-x_j)^2(y_i-y_j)^2 & = n\sum_i x_i^2y_i^2+\|\x\|^2\|\y\|^2+2(\x^T\y)^2 = 1+n\sum_{i \in V(G)} x_i^2y_i^2,
    \end{align*}
    where the leftmost sum picks some arbitrary ordering of the vertices in $V(G)$. It follows directly from the definition of $Q(G)$ that
    \begin{align*}
        Q(G) = \range(\x)^2 + \range(\y)^2 - 1 - n\sum_{i \in V(G)} x_i^2y_i^2,
    \end{align*}
    which is what we wanted to show.
\end{proof}

\subsubsection{An inequality from the extreme vertices}\label{sec:inequality_extreme_vertices}

We now turn the equality of Lemma~\ref{lem:quartic_identities} into an inequality that is based on just a few terms from the sum $\sum_{i \in V(G)}x_i^2y_i^2$ and the two sums from Equation~\eqref{eq:QC_alternate} (the alternate formula for $Q(G)$). In particular, notice that each term in these sums is nonnegative, so discarding some terms from these sums results in an inequality of the form $\range(\x)^2 + \range(\y)^2 - f(\x,\y) \geq 1$, where $f$ is some biquadratic function of $\x$ and $\y$.

We start by defining a helper function that we will need to make this idea precise:

\begin{definition}\label{defn:kappa_func}
    Define $\kappa : [0,1]^2 \rightarrow \R$ by
    \begin{align*}
        \kappa(x,y) := 1-4xy(1-x)(1-y) & - \frac{\big(x+(1-2x)y^2\big)^2}{1+30y^2} - \frac{\big(1-x+(2x-1)(1-y)^2\big)^2}{1+30(1-y)^2}\\
        & - \frac{\big(1-y+(2y-1)x^2\big)^2}{1+30x^2} - \frac{\big(y+(1-2y)(1-x)^2\big)^2}{1+30(1-x)^2}.
    \end{align*}
    Furthermore, define $\kappa_* : [0,1]^2 \rightarrow \R$ by $\kappa_*(x,y) = \min\{\kappa(x,y),\kappa(y,x)\}$.
\end{definition}

The functions $\kappa$ and $\kappa_*$ have the following elementary properties.

\begin{lemma}\label{lem:kappa_properties}
    Let $x,y \in [0,1]$. Then $\kappa(x,y) \leq 1$ and
    \begin{align*}
        \kappa(x,y) = \kappa(y,1-x) = \kappa(1-x,1-y) = \kappa(1-y,x).
    \end{align*}
    Consequently, $\kappa_*(x,y) \leq 1$ and
    \begin{align}\label{eq:kappa_symmetry}
        \kappa_*(x,y) = \kappa_*(y,x) = \kappa_*(1-x,y) = \kappa_*(x,1-y).
    \end{align}
\end{lemma}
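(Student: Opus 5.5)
For $\kappa(x,y)\le 1$, I would argue directly from Definition~\ref{defn:kappa_func}. For $x,y\in[0,1]$ the products $x$, $y$, $1-x$, $1-y$ are all nonnegative, so $4xy(1-x)(1-y)\ge 0$. Each of the four subtracted fractions is a square divided by a denominator of the form $1+30t^2\ge 1$, so each fraction is nonnegative. Hence $\kappa(x,y)\le 1$.

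For the symmetries, I would show that the single substitution $\sigma:(x,y)\mapsto(y,1-x)$ leaves $\kappa$ invariant. The chain of equalities in the lemma then follows by iterating $\sigma$, because
\[
\sigma(x,y)=(y,1-x),\quad \sigma^2(x,y)=(1-x,1-y),\quad \sigma^3(x,y)=(1-y,x).
\]
The first term $1-4xy(1-x)(1-y)$ is clearly invariant under $\sigma$. For the four fractions, call them $F_1,\dots,F_4$ in the order written, and substitute $x\to y$, $y\to 1-x$:
\begin{itemize}
\item $F_1$ becomes $\big(y+(1-2y)(1-x)^2\big)^2/\big(1+30(1-x)^2\big)$, which is $F_4$.
\item $F_4$ becomes $\big(1-x+(2x-1)(1-y)^2\big)^2/\big(1+30(1-y)^2\big)$, which is $F_2$, using $1-2(1-x)=2x-1$.
\item $F_2$ becomes $\big(1-y+(2y-1)x^2\big)^2/(1+30x^2)$, which is $F_3$.
\item $F_3$ becomes $\big(x+(1-2x)y^2\big)^2/(1+30y^2)$, which is $F_1$.
\end{itemize}
So $\sigma$ permutes the four fractions cyclically and $\kappa\circ\sigma=\kappa$. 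This bookkeeping is the only real work, and it is mechanical.

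For $\kappa_*(x,y)=\min\{\kappa(x,y),\kappa(y,x)\}$, the bound $\kappa_*\le 1$ is immediate. The identity $\kappa_*(x,y)=\kappa_*(y,x)$ holds because the definition is symmetric in its two arguments. For $\kappa_*(1-x,y)=\min\{\kappa(1-x,y),\kappa(y,1-x)\}$, I would rewrite each term using the invariances of $\kappa$:
\begin{itemize}
\item $\kappa(y,1-x)=\kappa(x,y)$, by the first equality.
\item $\kappa(1-x,y)$ is $\kappa(a,b)$ with $a=1-x$, $b=y$. By the fourth equality, $\kappa(a,b)=\kappa(1-b,a)$, so $\kappa(1-x,y)=\kappa(1-y,1-x)$. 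By the third equality, $\kappa(1-y,1-x)=\kappa(y,x)$.
\end{itemize}
Hence $\kappa_*(1-x,y)=\min\{\kappa(y,x),\kappa(x,y)\}=\kappa_*(x,y)$. Finally, $\kappa_*(x,1-y)=\kappa_*(1-y,x)$ by the swap symmetry, and applying the previous identity gives $\kappa_*(1-y,x)=\kappa_*(y,x)=\kappa_*(x,y)$. This establishes Equation~\eqref{eq:kappa_symmetry}.
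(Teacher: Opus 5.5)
Your proof is correct and follows the paper's route. Nonnegativity of the subtracted terms gives $\kappa\le 1$, the substitution $(x,y)\mapsto(y,1-x)$ fixes $xy(1-x)(1-y)$ and cyclically permutes the four rational terms, and the $\kappa_*$ identities follow; you simply write out the permutation and the $\kappa_*$ deductions that the paper calls immediate.
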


\begin{proof}
    Every term subtracted from $1$ in the definition of $\kappa(x,y)$ (Definition~\ref{defn:kappa_func}) is nonnegative, so $\kappa(x,y) \leq 1$. The substitution $(x,y) \mapsto (y,1-x)$ leaves $xy(1-x)(1-y)$ unchanged and permutes the four rational terms, which proves the claimed identities for $\kappa$. The claims about $\kappa_*$ follow immediately.
\end{proof}

We can now state the inequality relating the two eigenvectors that we mentioned at the start of this subsection:

\begin{lemma}\label{lem:quartic_inequalities}
    Let $G$ be an eigensimple graph on $n \geq 32$ vertices and let $\x,\y \in \R^n$ be unit $1$-eigenvectors of $G$ and $\Gc$, respectively. Then
    \begin{align*}
        1 \leq \range(\x)^2 + \range(\y)^2 - \range(\x)^2\range(\y)^2\kappa_*\big(\spread(\x),\spread(\y)\big).
    \end{align*}
\end{lemma}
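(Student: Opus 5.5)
The plan is to start from the identity of Lemma~\ref{lem:quartic_identities}, $1 = \range(\x)^2+\range(\y)^2 - Q(G) - n\sum_i x_i^2y_i^2$. It then suffices to show
\[
Q(G) + n\sum_{i} x_i^2y_i^2 \;\geq\; \range(\x)^2\range(\y)^2\,\kappa(\spread(\x),\spread(\y)),
\]
together with the same inequality with $\kappa(\spread(\y),\spread(\x))$ on the right. The second inequality should follow from the first by applying it to $\Gc$, which is eigensimple, has the same $Q$, and swaps the roles of $\x$ and $\y$. Both sides are invariant under $\x\mapsto-\x$ and $\y\mapsto-\y$. By Lemma~\ref{lem:kappa_properties} these sign flips correspond to the substitutions $x\mapsto 1-x$ and $y \mapsto 1-y$, so I may fix the orientation conventions freely.

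Using Lemma~\ref{lem:extremal_path}, fix the induced path $p-q-r-s$. Write $X=\range(\x)$, $Y=\range(\y)$, $a=\spread(\x)$, $b=\spread(\y)$. Normalize so that $x_p, x_s$ are expressed through $X$ and $a$ (one of them is $aX$ in absolute value, the other $(1-a)X$), and $y_q,y_r$ through $Y$ and $b$. The remaining unknowns at these four vertices are $x_q,x_r$ and $y_p,y_s$. Since every term of the alternate formula~\eqref{eq:QC_alternate} for $Q(G)$ is nonnegative, and so is every term of $n\sum x_i^2y_i^2$, I discard everything except terms supported on $\{p,q,r,s\}$. These are: the $G$-edges $pq, rs$ (the edge $qr$ contributes $0$, since $y_q-y_r=\pm Y$); the $\Gc$-edges $pr, qs$ (the edge $ps$ contributes $0$); and the diagonal terms $n x_v^2y_v^2$ for $v\in\{p,q,r,s\}$.

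Next I group these terms by the unknown entry they involve. Take $x_q$ as an example: it appears in the edge term $(x_p-x_q)^2\big(Y^2-(y_p-y_q)^2\big)$, in the $\Gc$-edge term $\big(X^2-(x_q-x_s)^2\big)(y_q-y_s)^2$, and in $n x_q^2y_q^2$ with $y_q^2$ known. I would bound the coefficients involving the other unknowns $y_p,y_s$ using only that they lie in $[\min\y,\max\y]$. The result should be a convex quadratic in $x_q$ of the shape $\alpha(x_p-x_q)^2 + n y_q^2 x_q^2$ plus lower-order corrections. Minimizing over $x_q$ gives a harmonic-mean expression. After using $n\geq 32$ (and presumably subtracting some budget, leaving $30$), dividing by $X^2Y^2$ should produce exactly one of the four rational terms of $\kappa$, such as $(a+(1-2a)b^2)^2/(1+30b^2)$. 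The four unknowns $x_q,x_r,y_p,y_s$ give the four rational terms, consistent with the fourfold symmetry of $\kappa$ noted in Lemma~\ref{lem:kappa_properties}. The constant part $1-4ab(1-a)(1-b)$ should come from the known diagonal terms $n x_p^2y_p^2$ and the like, combined with the ``full-range'' pieces $X^2Y^2$ of the edge terms, together with the identity $(a(1-b))^2$-type expansions.

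The main obstacle is the minimization step. The unknowns are coupled: $y_p$ appears alongside $x_q$ in the $pq$ edge term, and all four edge terms are genuinely biquadratic. So I must choose carefully which cross terms to drop, or bound crudely, so that the problem separates into four independent one-variable quadratic minimizations. I would first try to lower-bound each edge term by splitting $Y^2-(y_p-y_q)^2=(Y-(y_q-y_p))(Y+(y_q-y_p))$. Then I would use the $n y_p^2x_p^2$ term to control $y_p$, and each pairing would be verified by completing the square. The exact bookkeeping that yields the constant $30$ rather than $n$ is where I expect the most trial and error.
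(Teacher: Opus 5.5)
Your skeleton matches the paper's: start from Lemma~\ref{lem:quartic_identities}, and keep exactly the $G$-edges $pq,rs$, the $\Gc$-edges $pr,qs$ and the four diagonal terms. Then reduce to four one-variable quadratics in $x_q,x_r,y_p,y_s$, minimize each, and replace $n-2$ by $30$. But the step you defer as the ``main obstacle'' is the real content of the proof, and the tool you propose for it fails. Bounding the coefficient $Y^2-(y_q-y_p)^2$ by its worst case over $y_p\in[\min(\y),\max(\y)]$ gives $0$ (at $y_p=y_r$), which kills the $pq$ term. Its $y_p$-dependence must instead be \emph{moved} into the $y_p$-quadratic, where $nx_p^2y_p^2$ controls it, and that needs sign information you never derive. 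The eigenvector equation at a maximizer of $\x$ (or $\y$) forces every neighbor (in $G$, resp.\ $\Gc$) to have a nonnegative entry, and symmetrically at minimizers. The path adjacencies ($q\sim p$, $r\sim s$ in $G$; $r\sim p$, $q\sim s$ in $\Gc$) then give $0\le x_q\le x_p$, $x_s\le x_r\le 0$, $y_r\le y_p\le 0$, $0\le y_s\le y_q$. Now $Y^2-(y_q-y_p)^2=(Y^2-y_q^2)-y_p(y_p-2y_q)$ with $y_p(y_p-2y_q)\ge 0$ and $(x_p-x_q)^2\le x_p^2$, so
\[
(x_p-x_q)^2\bigl(Y^2-(y_q-y_p)^2\bigr)\ \ge\ (Y^2-y_q^2)(x_p-x_q)^2-x_p^2\,y_p(y_p-2y_q),
\]
which is separable. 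Treating the other three edge terms the same way gives the quadratic $\bigl(Y^2+(n-2)y_q^2\bigr)x_q^2-2\bigl(x_pY^2-(x_p+x_s)y_q^2\bigr)x_q$. The ``$-2$'' is one $-y_q^2x_q^2$ from each of the $pq$ and $qs$ terms. Completing the square and using $n-2\ge 30$ gives exactly the rational terms of $\kappa$. Your factorization $(Y-(y_q-y_p))(Y+(y_q-y_p))$ does not produce this separation.

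There are two smaller problems. First, the constant does not come from diagonal terms. Each diagonal term contains an unknown ($nx_p^2y_p^2$ involves $y_p$), and all of them go into the quadratics. The constant is the frozen part of the edge terms,
\[
x_p^2(Y^2-y_q^2)+x_s^2(Y^2-y_r^2)+y_r^2(X^2-x_p^2)+y_q^2(X^2-x_s^2)=X^2Y^2-4x_px_sy_qy_r .
\]
An $n$-dependent source could not give the $n$-free $1-4uv(1-u)(1-v)$. Second, you cannot orient $\x$ and $\y$ independently, because the path ties them together. In general only a joint sign flip is free, and $x\mapsto 1-x$ alone is a symmetry of $\kappa_*$ but not of $\kappa$. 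The argument therefore yields $\kappa(u,v)$ with $u=x_p/X$ and $v=y_q/Y$, which equals $\kappa(a,b)$ or $\kappa(b,a)$ depending on orientation. This is harmless, because one bound suffices: $\kappa(u,v)\ge\kappa_*(u,v)=\kappa_*(a,b)$ by~\eqref{eq:kappa_symmetry}. So you need not prove both versions.
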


\begin{proof}
    Since $\range(\y) = \range(-\y)$ and $\spread(\y) = \spread(-\y)$, we can choose the sign of $\y$ and vertices $p,q,r,s$ as in Lemma~\ref{lem:extremal_path}. For ease of notation, write $r_x := \range(\x)^2$ and $r_y := \range(\y)^2$. By Lemma~\ref{lem:quartic_identities}, it suffices to prove that
    \begin{align*}
        Q(G) + n\sum_{i \in V(G)}x_i^2y_i^2 \geq r_xr_y\kappa_*\big(\spread(\x),\spread(\y)\big).
    \end{align*}

    At a maximum $x_v$ of $\x$, every summand $x_v - x_w$ on the right-hand side of the eigenvector equation (Equation~\eqref{eq:eigenvector_equation}) is nonnegative. It follows that a negative $x_w$ at a neighbor $w$ would make one summand larger than the left-hand side, contradicting the eigenvector equation. It follows that $x_w \geq 0$ for all neighbors $w$ of $v$. By applying this observation to each of $\x$, $-\x$, $\y$, and $-\y$, and using the adjacencies described by the induced path in Lemma~\ref{lem:extremal_path}, we get
    \begin{align}\label{ineq:coord_bounds}
        0 \leq x_q \leq x_p, \qquad x_s \leq x_r \leq 0, \qquad y_r \leq y_p \leq 0, \quad \text{and} \quad 0 \leq y_s \leq y_q.
    \end{align}

    We retain the terms of $Q(G)$ corresponding to the edges $\{p,q\}$ and $\{r,s\}$ of $G$ in the first sum of Equation~\eqref{eq:QC_alternate}, the terms corresponding to the edges $\{p,r\}$ and $\{q,s\}$ in the second sum, and the terms of $\sum_{i \in V(G)}x_i^2y_i^2$ corresponding to the vertices $p$, $q$, $r$, and $s$. All other terms are nonnegative, so we can discard them to get the inequality
    \begin{align}\begin{split}\label{ineq:QG_bound}
        n\left(\sum_{i \in V(G)}x_i^2y_i^2\right) + Q(G) & \geq n\big(y_q^2x_q^2+y_r^2x_r^2+x_p^2y_p^2+x_s^2y_s^2\big) \\[-0.4cm]
        & \quad + (x_p-x_q)^2\big(r_y-(y_q-y_p)^2\big)+(x_r-x_s)^2\big(r_y-(y_s-y_r)^2\big) \\
        & \quad + (y_p-y_r)^2\big(r_x-(x_p-x_r)^2\big)+(y_q-y_s)^2\big(r_x-(x_q-x_s)^2\big).
    \end{split}\end{align}

    We next bound this expression from below by a quadratic in $x_q$, $x_r$, $y_p$, and $y_s$. To bound the product $(x_p-x_q)^2\big(r_y-(y_q-y_p)^2\big)$, write
    \begin{align*}
        (x_p-x_q)^2 = x_p^2-(2x_px_q-x_q^2) \qquad \text{and} \qquad r_y-(y_q-y_p)^2 = r_y-y_q^2-(-2y_qy_p+y_p^2).
    \end{align*}
    The quantities $2x_px_q-x_q^2$ and $-2y_qy_p+y_p^2$ are nonnegative by Inequality~\eqref{ineq:coord_bounds}, so
    \begin{align*}
        (x_p-x_q)^2\big(r_y-(y_q-y_p)^2\big) & = \big(x_p^2-(2x_px_q-x_q^2)\big)\big(r_y-y_q^2-(-2y_qy_p+y_p^2)\big) \\
        & = x_p^2(r_y-y_q^2) - (2x_px_q-x_q^2)(r_y-y_q^2) - x_p^2(-2y_qy_p+y_p^2) \\
        & \quad \, + (2x_px_q-x_q^2)(-2y_qy_p+y_p^2) \\
        & \geq x_p^2(r_y-y_q^2) - (2x_px_q-x_q^2)(r_y-y_q^2) - x_p^2(-2y_qy_p+y_p^2).
    \end{align*}

    The other three products can be treated in exactly the same way. Substituting the resulting inequalities into Inequality~\eqref{ineq:QG_bound}, and then collecting terms, gives
    \begin{align}\begin{split}\label{inq:QG_better}
        n\left(\sum_{i \in V(G)}x_i^2y_i^2\right) + Q(G) & \geq r_xr_y-4x_px_sy_qy_r \\[-0.4cm]
        & \quad + \big(r_y+(n-2)y_q^2\big)x_q^2-2\big(x_pr_y-(x_p+x_s)y_q^2\big)x_q \\
        & \quad + \big(r_y+(n-2)y_r^2\big)x_r^2-2\big(x_sr_y-(x_p+x_s)y_r^2\big)x_r \\
        & \quad + \big(r_x+(n-2)x_p^2\big)y_p^2-2\big(y_rr_x-(y_q+y_r)x_p^2\big)y_p \\
        & \quad + \big(r_x+(n-2)x_s^2\big)y_s^2-2\big(y_qr_x-(y_q+y_r)x_s^2\big)y_s.
    \end{split}\end{align}

    We want to further lower bound this quantity, which we will do by finding a lower bound on each of the final four lines of Inequality~\eqref{inq:QG_better}. Applying the inequality $az^2 - 2bz \geq -b^2/a$ for $a > 0$ (since $a^2z^2 - 2abz + b^2 = (az - b)^2 \geq 0$) to each of these four lines gives
    \begin{align*}
        n\left(\sum_{i \in V(G)}x_i^2y_i^2\right) + Q(G) & \geq r_xr_y-4x_px_sy_qy_r \\[-0.4cm]
        & \quad -\frac{\big(x_pr_y-(x_p+x_s)y_q^2\big)^2}{r_y+(n-2)y_q^2}
        -\frac{\big(x_sr_y-(x_p+x_s)y_r^2\big)^2}{r_y+(n-2)y_r^2} \\
        & \quad -\frac{\big(y_rr_x-(y_q+y_r)x_p^2\big)^2}{r_x+(n-2)x_p^2}
        -\frac{\big(y_qr_x-(y_q+y_r)x_s^2\big)^2}{r_x+(n-2)x_s^2} \\
        & \geq r_xr_y\kappa\left(\frac{x_p}{x_p-x_s},\frac{y_q}{y_q-y_r}\right) \\
        & \geq r_xr_y\kappa_{*}\left(\frac{x_p}{x_p-x_s},\frac{y_q}{y_q-y_r}\right).
    \end{align*}
    Here the second inequality follows from $n \geq 32$: replacing each denominator of the form $a + (n-2)b^2$ by $a+30b^2$ can only increase the corresponding subtracted fraction. By Equation~\eqref{eq:kappa_symmetry},
    \begin{align*}
        \kappa_*\left(\frac{x_p}{x_p-x_s},\frac{y_q}{y_q-y_r}\right) = \kappa_*\left(\frac{\min\{x_p,-x_s\}}{x_p-x_s},\frac{\min\{y_q,-y_r\}}{y_q-y_r}\right) = \kappa_*\big(\spread(\x),\spread(\y)\big),
    \end{align*}
    which completes the proof.
\end{proof}

\subsubsection{Inequalities from dominating sets}\label{sec:complicated_bounds}

The inequality of Lemma~\ref{lem:quartic_inequalities} was obtained from the identity of Lemma~\ref{lem:quartic_identities} by discarding all except four terms from $\sum_{i\in V(G)}x_i^2y_i^2$ and four terms from the alternate formula for $Q(G)$. We now obtain two further bounds by instead discarding all terms of $Q(G)$ except for those corresponding to pairs of vertices that are incident with one of the dominating sets described by Lemma~\ref{lem:extremal_path}.

\begin{definition}\label{defn:gamma_Tj}
    Define $\gamma_3 := 347/392$, $\gamma_4 := 613/729$, and $\gamma_6 := 463/625$. For $j \in \{3,4,6\}$, define functions $T_{*}, T_j : [0,1/2] \rightarrow \R$ by
    \begin{align*}
        T_{*}(t) & := \frac{1}{8}+(1-t)^2-\frac{1}{13}\big(t^2+(1-t)^2\big) \quad \text{and} \\
        T_j(t) & := 1 - \gamma_j + \frac{40}{31}\gamma_j(1-t)^2 - \frac{10j}{(31-j)^2}\big(t^2+(1-t)^2\big).
    \end{align*}
\end{definition}

The reason for defining these seemingly bizarre quantities $\gamma_3$, $\gamma_4$ and $\gamma_6$, and functions $T_*$, $T_3$, $T_4$, and $T_6$, is that we will shortly have to split into a few different cases depending on the degrees of certain vertices in $G$ and $\Gc$. Degrees $3$, $4$, and $6$ are where we need to split our casework (i.e., one case will cover degrees $2$ and $3$, one will cover degree $4$, one will cover degrees $5$ and $6$, and one will cover degrees $7$ and larger). A different $\gamma_j$ and $T_j$ will be used in the inequalities of the different cases.

Since $(1-t)^2 \geq (t^2+(1-t)^2)/2$ for $t \in [0,1/2]$, we have
\begin{align}\label{eq:Tpositive}
    T_{*}(t) \geq \frac{1}{8}+\frac{11}{26}\big(t^2+(1-t)^2\big) > 0 \quad \text{and} \quad T_j(t) \geq 1 - \gamma_j + \frac{296}{775}\big(t^2+(1-t)^2\big) > 0
\end{align}
on that interval. In particular, this shows that the coefficients of the squared ranges in the following lemma are positive:

\begin{lemma}\label{lem:uniform_domination}
    Let $G$ be an eigensimple graph on $n \geq 32$ vertices, let $\x,\y \in \R^n$ be unit $1$-eigenvectors of $G$ and $\Gc$, respectively, choose $p,q,r,s$ as in Lemma~\ref{lem:extremal_path}, and let $j_x,j_y \in \{3,4,6\}$.
    
    \begin{itemize}
        \item[(a)] If $\max\big\{\overline{d}(q), \overline{d}(r)\big\} \leq 3$ then
        \begin{align*}
            1 \leq \range(\x)^2+T_*\big(\spread(\y)\big)\range(\y)^2+\frac{1}{13}.
        \end{align*}

        \item[(b)] If $\max\{d(p), d(s)\} \leq j_x$ and $\max\big\{\overline{d}(q), \overline{d}(r)\big\} \leq j_y$ then
        \begin{align*}
            1 & \leq T_{j_x}\big(\spread(\x)\big)\range(\x)^2+T_{j_y}\big(\spread(\y)\big)\range(\y)^2 + \frac{10j_x}{(31-j_x)^2}+\frac{10j_y}{(31-j_y)^2}.
        \end{align*}
    \end{itemize}
\end{lemma}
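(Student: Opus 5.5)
The plan follows the proof of Lemma~\ref{lem:quartic_inequalities}. By Lemma~\ref{lem:quartic_identities}, it suffices to bound $Q(G) + n\sum_i x_i^2y_i^2$ from below. For part~(b) the target is
\[
r_x\big(1-T_{j_x}(\spread(\x))\big) + r_y\big(1-T_{j_y}(\spread(\y))\big) - \frac{10j_x}{(31-j_x)^2} - \frac{10j_y}{(31-j_y)^2},
\]
with $r_x := \range(\x)^2$ and $r_y := \range(\y)^2$. Part~(a) has the analogous target with $\x$ contributing nothing and the $\y$-side coefficient $1-T_*$. Since $r_x(t^2+(1-t)^2) = x_p^2 + x_s^2$ and $r_x(1-t)^2 = \max\{x_p^2,x_s^2\}$ with $t=\spread(\x)$, this target can be rewritten as follows. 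On the $\x$-side it reads
\[
\gamma_{j_x}\Big(r_x - \tfrac{40}{31}\max\{x_p^2,x_s^2\}\Big) - \frac{10j_x}{(31-j_x)^2}\big(1 - x_p^2 - x_s^2\big),
\]
and the $\y$-side is analogous. I would prove the $\x$-side and the $\y$-side contributions separately and add them.

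To do this, fix the signs and $p,q,r,s$ as in Lemma~\ref{lem:extremal_path}. For the $\x$-side I keep from Equation~\eqref{eq:QC_alternate} only the $E(\Gc)$-terms $\big(r_x-(x_u-x_w)^2\big)(y_u-y_w)^2$ with $u\in\{p,s\}$, together with the terms $n x_w^2y_w^2$. Symmetrically, for the $\y$-side I keep only the $E(G)$-terms at $q$ and $r$. Because $\{p,s\}$ dominates $\Gc$, every $w\notin\{p,s\}$ contributes at least one term, so no vertex is lost. When $d(p),d(s)\le j_x$, all but at most $2j_x$ vertices are $\Gc$-adjacent to both $p$ and $s$. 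For such $w$, two facts combine. First, $r_x-(x_p-x_w)^2 = (x_w-x_s)(2x_p-x_s-x_w)$, and similarly at $s$. Second, $\sum_w y_w = 0$ and $\sum_w y_w^2 = 1$. Together these should give a lower bound of the form $\gamma\,(r_x - c\max\{x_p^2,x_s^2\})\sum_w y_w^2$, after minimizing a quadratic in $x_w$ (using the $n x_w^2 y_w^2$ term, as in the $az^2-2bz\ge -b^2/a$ step earlier). The excluded neighbors of $p$ and $s$, together with the vertices $p,s$ themselves, carry only the remaining $\y$-mass, which the constant term $10j/(31-j)^2$ must absorb. I would control this mass via Lemma~\ref{lem:eigensimple_properties}(e) and Cauchy--Schwarz: $d(p)|x_p| \le \sum_{w\notin N[p]}|x_w|$ over about $n-1-j\ge 31-j$ vertices, and likewise for $\y$. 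I expect this is where the denominator $(31-j)^2$ enters, using $n\ge 32$, and where the factor $j$ appears as the count of excluded vertices.

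Part~(a) is the same $\y$-side argument with $\max\{\overline d(q),\overline d(r)\}\le 3$. The $\x$-side terms are simply dropped, and the constants $1/8$ and $1/13$ are there to cover the small-degree regime. In that regime the bound $5/13$ from Lemma~\ref{lem:max_513_bound} is presumably what replaces $h(j)$: note $1-\tfrac{5}{13}\cdot\ldots$ gives the $\tfrac{1}{13}$ coefficients.

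The main obstacle is the middle step: choosing how to split each vertex's contribution between the $Q$-term and the $n x_w^2y_w^2$ term so that the pointwise quadratic minimization yields exactly the constants $\gamma_j$ and $40/31$. I would first try to treat each $w$ by minimizing over $x_w$ (respectively $y_w$) with the other vector fixed. Then I would use $\sum y_w^2 = 1 - y_p^2 - y_s^2 - (\text{excluded mass})$ and tune the resulting worst-case ratio numerically to land on $\gamma_3,\gamma_4,\gamma_6$. If this pointwise approach loses too much, I would instead keep both the $p$-term and the $s$-term for doubly non-adjacent $w$ and optimize jointly.
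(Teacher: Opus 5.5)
Your skeleton matches the paper's proof. You keep the $E(\Gc)$-terms of Equation~\eqref{eq:QC_alternate} at $p,s$ for the $\x$-side and the $E(G)$-terms at $q,r$ for the $\y$-side, and you split $n\sum_w x_w^2y_w^2$ between the two sides. Your rewritten $\x$-side target $\gamma_{j_x}\big(\range(\x)^2-\tfrac{40}{31}\max\{x_p^2,x_s^2\}\big)-\tfrac{10j_x}{(31-j_x)^2}(1-x_p^2-x_s^2)$ is exactly what the paper proves. There is, however, a genuine gap: the mechanism that produces this bound.

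The key idea you are missing is that the \emph{other} eigenvector is tiny at the dominating pair. Lemma~\ref{lem:eigensimple_properties}(e), applied in $\Gc$ at $p$, gives $\overline{d}(p)y_p=-\sum_{w\in N(p)}y_w$. Here the coefficient $\overline{d}(p)\geq n-1-j_x\geq 31-j_x$ is large and there are only $d(p)\leq j_x$ summands. Cauchy--Schwarz and $N(p)\cap N(s)=\emptyset$ then give $y_p^2+y_s^2\leq j_x/(31-j_x)^2$ and $\overline{d}(p)y_p^2+\overline{d}(s)y_s^2\leq j_x/(31-j_x)$. Your inequality $d(p)|x_p|\leq\sum_{w\notin N(p)\cup\{p\}}|x_w|$ applies (e) in the wrong graph to the wrong vector. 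It swaps the roles of $j$ and $31-j$ (small coefficient, many summands), so it yields no smallness at all.

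With the correct estimate, set $\Gamma:=\sum_{v\in\{p,s\}}\sum_{w\in\overline{N}(v)\setminus\{p,s\}}(y_w-y_v)^2$. Expanding it via the eigenvector equations of $\Gc$ at $p,s$, and using that $\{p,s\}$ dominates $\Gc$, gives $\Gamma\geq 1-\overline{d}(p)y_p^2-\overline{d}(s)y_s^2-2(y_p^2+y_s^2)\geq\gamma_{j_x}$. That is where $\gamma_j=1-\frac{j}{31-j}-\frac{2j}{(31-j)^2}$ comes from; it is not a numerically tuned ratio.

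The constants $\frac{40}{31}$ and $\frac{10j}{(31-j)^2}$ come from Young's inequality, not from a pointwise minimization. The bounds $(x_w-x_v)^2\leq\frac{40}{31}x_v^2+\frac{40}{9}x_w^2$ and $(y_w-y_v)^2\leq\frac{9}{5}y_w^2+\frac{9}{4}y_v^2$ bound the subtracted quartic terms by $\frac{40}{31}\max\{x_p^2,x_s^2\}\,\Gamma+16\sum_w x_w^2y_w^2+10(y_p^2+y_s^2)(1-x_p^2-x_s^2)$. The middle term is covered by half of $n\sum_w x_w^2y_w^2$, since $n/2\geq 16$. Finally, $\Gamma$ may be replaced by $\gamma_{j_x}$, because the target is trivially true when $\range(\x)^2<\frac{40}{31}\max\{x_p^2,x_s^2\}$.

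Your plan to single out vertices $\Gc$-adjacent to both $p$ and $s$ would also fail. You propose letting the constant absorb the $\y$-mass of $N(p)\cup N(s)\cup\{p,s\}$. But $q\in N(p)$ and $r\in N(s)$, so that set carries $y_q^2+y_r^2=\big(\spread(\y)^2+(1-\spread(\y))^2\big)\range(\y)^2\geq\range(\y)^2/2$. This is not small and cannot be absorbed by $10j_x/(31-j_x)^2\leq 60/625$. No such split is needed: domination already gives every $w\notin\{p,s\}$ one term, so the only mass lost is $y_p^2+y_s^2$, which the estimate above controls.

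Lemma~\ref{lem:max_513_bound} also plays no role in part~(a). Part~(a) is the same $\y$-side bound at $q,r$, using the analogous smallness of $x_q,x_r$ with $j=3$, but with three changes:
\begin{itemize}
\item $\gamma_3$ is replaced by $\delta=7/8$;
\item the Young parameter is $\varepsilon=7$ instead of $31/9$;
\item all of $n\sum_w x_w^2y_w^2$ is used on that side, which needs $n\geq 144/5$.
\end{itemize}
The resulting constant $27/392$ is then rounded up to $1/13$.
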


\begin{proof}
    Fix $j \in \{3,4,6\}$ with $\max\big\{\overline d(q), \overline d(r)\big\} \leq j$. Lemma~\ref{lem:eigensimple_properties}(e) (applied to the vertices $q$ and $r$) and the Cauchy--Schwarz inequality give
    \begin{align}\label{ineq:T_proof_dq_dr}
        d(q)^2 x_q^2 \leq j\sum_{w \in \overline{N}(q)}x_w^2 \qquad \text{and} \qquad d(r)^2 x_r^2 \leq j\sum_{w \in \overline{N}(r)}x_w^2.
    \end{align}
    The sets $\overline{N}(q)$ and $\overline{N}(r)$ are disjoint, since $\{q,r\}$ dominates $G$. Since $d(q), d(r) \geq n-1-j \geq 31-j$ and $\|\x\| = 1$, this disjointness gives (after dividing Inequalities~\eqref{ineq:T_proof_dq_dr} by $d(q)$, $d(r)$, $d(q)^2$, and/or $d(r)^2$ and then adding them) 
    \begin{align}\label{ineq:x_q_x_r_bound}
        x_q^2+x_r^2 \leq \frac{j}{(31-j)^2} \qquad \text{and} \qquad d(q)x_q^2+d(r)x_r^2 \leq \frac{j}{31-j}.
    \end{align}
    
    Now define
    \begin{align*}
        \Gamma_{q,r}(G) & := \sum_{v\in\{q,r\}} \left( \sum_{w\in N(v)\setminus\{q,r\}}(x_w - x_v)^2\right) \quad \text{and} \\
        Q_{q,r}(G) & := \Gamma_{q,r}(G)\range(\y)^2 - \sum_{v\in\{q,r\}} \left( \sum_{w\in N(v)\setminus\{q,r\}}(x_w - x_v)^2(y_w - y_v)^2 \right).
    \end{align*}
    The alternate formula
    \begin{align}\label{eq:Qqr_alternate}
        Q_{q,r}(G) & = \sum_{v\in\{q,r\}} \left( \sum_{w\in N(v)\setminus\{q,r\}}(x_w - x_v)^2\big(\range(\y)^2 - (y_w - y_v)^2\big) \right)
    \end{align}
    shows that $0 \leq Q_{q,r}(G) \leq Q(G)$. Furthermore, the eigenvector equation (Equation~\eqref{eq:eigenvector_equation}) at vertices $q$ and $r$ gives
    \[
        \sum_{w\in N(q)\setminus\{q,r\}} x_w = \big( d(q) - 1 \big)x_q - x_r \quad \text{and} \quad \sum_{w\in N(r)\setminus\{q,r\}} x_w = \big( d(r) - 1 \big)x_r - x_q.
    \]
    Expanding the squares in the definition of $\Gamma_{q,r}(G)$ and using these identities gives
    \[
        \Gamma_{q,r}(G) = \sum_{v\in\{q,r\}} \left( \sum_{w\in N(v)\setminus\{q,r\}}x_w^2\right) - \big( d(q) - 1 \big)x_q^2 - \big( d(r) - 1 \big)x_r^2 + 4x_qx_r.
    \]
    Since every vertex outside $\{q,r\}$ occurs in this double sum (since $\{q,r\}$ dominates $G$) and $\|\x\| = 1$, we get
    \begin{align}\begin{split}\label{ineq:gamma_Gamma}
        \Gamma_{q,r}(G) & \geq 1 - d(q)x_q^2 - d(r)x_r^2 + 4x_qx_r \\
        & \geq 1 - d(q)x_q^2 - d(r)x_r^2 - 2(x_q^2 + x_r^2) \\
        & \geq 1 - \frac{j}{31-j} - \frac{2j}{(31-j)^2} \\
        & = \gamma_j.
    \end{split}\end{align}
    (The final equality is why we defined $\gamma_3$, $\gamma_4$, and $\gamma_6$ the way that we did.)

    Our next goal is to derive a similar bound for $Q_{q,r}(G)$. For real numbers $\alpha, \beta$, and $\varepsilon$ with $\varepsilon > 0$, Young's inequality \cite[Section~4.8]{HLP52} says that $|\alpha\beta| \leq \alpha^2/(2\varepsilon) + \varepsilon\beta^2/2$. Using $\alpha = y_v$ and $\beta = y_w$ (and leaving $\varepsilon > 0$ unspecified) gives
    \[
        (y_w - y_v)^2 = y_v^2 - 2y_vy_w + y_w^2 \leq \left(1 + \frac{1}{\varepsilon}\right)y_v^2 + (1+\varepsilon)y_w^2,
    \]
    and similarly using $\alpha = x_v$, $\beta = x_w$, and $\varepsilon = 4/5$ gives
    \[
        (x_w - x_v)^2 = x_v^2 - 2x_vx_w + x_w^2 \leq \frac{9x_v^2}{4} + \frac{9x_w^2}{5}.
    \]
    Multiplying these inequalities together gives an upper bound on $(x_w-x_v)^2(y_w-y_v)^2$ and thus (via its definition) a lower bound on $Q_{q,r}(G)$. We obtain a sharper bound by applying the second inequality only to the term containing $y_w^2$ (not $y_v^2$):
    \begin{align*}
        (x_w-x_v)^2(y_w-y_v)^2 & \leq \left(1+\frac{1}{\varepsilon}\right)(x_w-x_v)^2y_v^2 + (1+\varepsilon)(x_w-x_v)^2 y_w^2 \\
        & \leq \frac{9(1+\varepsilon)}{5}x_w^2y_w^2 + \frac{9(1+\varepsilon)}{4}x_v^2y_w^2 + \left(1+\frac{1}{\varepsilon}\right)(x_w-x_v)^2y_v^2.
    \end{align*}
    (We emphasize that this inequality holds for all $\varepsilon > 0$; we will make two separate choices of $\varepsilon$ later.)
    
    We now sum this inequality over $v \in \{q,r\}$ and $w \in N(v)\setminus\{q,r\}$:
    \begin{align}\begin{split}\label{ineq:long_gamma}
        \Gamma_{q,r}(G)\range(\y)^2-Q_{q,r}(G) & = \sum_{v\in\{q,r\}}\left(\sum_{w\in N(v)\setminus\{q,r\}}(x_w-x_v)^2(y_w-y_v)^2\right) \\
        & \leq \frac{9(1+\varepsilon)}{5}\sum_{v\in\{q,r\}}\left(\sum_{w\in N(v)\setminus\{q,r\}}x_w^2 y_w^2\right) \\
        & \quad + \frac{9(1+\varepsilon)}{4}\sum_{v\in\{q,r\}}x_v^2\left(\sum_{w\in N(v)\setminus\{q,r\}}y_w^2\right) \\
        & \quad + \left(1+\frac{1}{\varepsilon}\right)\sum_{v\in\{q,r\}}\left(\sum_{w\in N(v)\setminus\{q,r\}}(x_w-x_v)^2y_v^2\right).
    \end{split}\end{align}
    
    We now bound the three double sums on the right-hand side of this inequality. In the first double sum, each term occurs at most twice (once for $v = q$ and once for $v = r$), so
    \[
        \sum_{v\in\{q,r\}}\left(\sum_{w\in N(v)\setminus\{q,r\}}x_w^2 y_w^2\right) \leq 2\left(\sum_{w\in V(G)}x_w^2 y_w^2\right).
    \]
    For the second double sum, we use the fact that $\|\y\| = 1$ to compute
    \begin{align*}
        \sum_{v\in\{q,r\}}x_v^2\left(\sum_{w\in N(v)\setminus\{q,r\}}y_w^2\right) \leq (x_q^2+x_r^2)(1-y_q^2-y_r^2) \leq \frac{j}{(31-j)^2}(1-y_q^2-y_r^2),
    \end{align*}
    where the final inequality comes from Inequality~\eqref{ineq:x_q_x_r_bound}. Finally, for the third double sum, we use the inequality $y_v^2 \leq \max\big\{y_q^2,y_r^2\big\}$, which gives
    \begin{align*}
        \sum_{v\in\{q,r\}}\left(\sum_{w\in N(v)\setminus\{q,r\}}(x_w-x_v)^2y_v^2\right) & \leq \max\big\{y_q^2,y_r^2\big\} \sum_{v\in\{q,r\}} \left( \sum_{w\in N(v)\setminus\{q,r\}}(x_w - x_v)^2\right) \\
        & = \max\big\{y_q^2,y_r^2\big\}\Gamma_{q,r}(G).
    \end{align*}
    
    Plugging these three bounds on the double sums from Inequality~\eqref{ineq:long_gamma} back into that inequality, and then rearranging, gives
    \begin{align*}
        & Q_{q,r}(G) + \frac{18(1+\varepsilon)}{5}\left(\sum_{w\in V(G)}x_w^2y_w^2\right) \\
        & \qquad \geq \Gamma_{q,r}(G)\left(\range(\y)^2 - \left(1+\frac{1}{\varepsilon}\right) \max\big\{y_q^2,y_r^2\big\}\right) - \frac{9(1+\varepsilon)j}{4(31-j)^2}\big(1 - y_q^2 - y_r^2\big).
    \end{align*}
    This in turn implies that, for all $0 < \delta \leq \gamma_j$, we have
    \begin{align}\begin{split}\label{eq:local_Q_bound}
        & Q_{q,r}(G) + \frac{18(1+\varepsilon)}{5}\left(\sum_{w\in V(G)}x_w^2y_w^2\right) \\
        & \qquad \geq \delta\left(\range(\y)^2 - \left(1+\frac{1}{\varepsilon}\right) \max\big\{y_q^2,y_r^2\big\}\right) - \frac{9(1+\varepsilon)j}{4(31-j)^2}\big(1 - y_q^2 - y_r^2\big).
    \end{split}\end{align}
    Indeed, if $\range(\y)^2 \geq (1+1/\varepsilon)\max\big\{y_q^2,y_r^2\big\}$ then we can use the fact that $\Gamma_{q,r}(G) \geq \gamma_j \geq \delta$ from Inequality~\eqref{ineq:gamma_Gamma}. Otherwise, the right-hand side of Inequality~\eqref{eq:local_Q_bound} is nonpositive (recall that $1-y_q^2-y_r^2 \geq 1 - \|\y\|^2 = 0$), so the inequality is trivially true.

    We are finally in a position to prove part~(a) of the lemma. Let $j = 3$, $\delta = 7/8$, and $\varepsilon = 7$ in Inequality~\eqref{eq:local_Q_bound} (notice that $7/8 \leq \gamma_3$, so this choice of $\delta$ is allowed). Since $n \geq 144/5 = 18(1+\varepsilon)/5$, we get
    \begin{align*}
        Q_{q,r}(G) + n\left(\sum_{w\in V(G)}x_w^2y_w^2\right) & \geq \frac{7}{8}\range(\y)^2 - \max\big\{y_q^2,y_r^2\big\} - \frac{27}{392}\big(1 - y_q^2 - y_r^2\big).
    \end{align*}
    By making use of the facts that
    \begin{align*}
        \max\big\{y_q^2,y_r^2\big\} = \big(1-\spread(\y)\big)^2\range(\y)^2 \quad \text{and} \quad y_q^2 + y_r^2 = \big(\spread(\y)^2 + \big(1-\spread(\y)\big)^2\big)\range(\y)^2,
    \end{align*}
    this becomes
    \begin{align*}
        & Q_{q,r}(G) + n\left(\sum_{w\in V(G)}x_w^2y_w^2\right) \\
        & \qquad \geq \frac{7}{8}\range(\y)^2 - \max\big\{y_q^2,y_r^2\big\} - \frac{27}{392}\big(1 - y_q^2 - y_r^2\big) \\
        & \qquad = \frac{7}{8}\range(\y)^2 - \big(1-\spread(\y)\big)^2\range(\y)^2 + \frac{27}{392}\big(\spread(\y)^2 + \big(1-\spread(\y)\big)^2\big)\range(\y)^2 - \frac{27}{392} \\
        & \qquad =\left( \frac{7}{8} + \frac{27}{392}\spread(\y)^2 - \frac{365}{392}\big(1-\spread(\y)\big)^2 \right)\range(\y)^2 - \frac{27}{392} \\
        & \qquad \geq \Big(1 - T_{*}\big(\spread(\y)\big)\Big)\range(\y)^2 - \frac{1}{13}.
    \end{align*}
    (We note that we replaced $27/392$ by the larger $1/13$, and we similarly adjusted coefficients when defining $T_{*}$ back in Definition~\ref{defn:gamma_Tj}, just to make this final inequality slightly cleaner.)
    
    Since $Q_{q,r}(G) \leq Q(G)$, Lemma~\ref{lem:quartic_identities} now gives
    \begin{align*}
        1 & = \range(\x)^2 + \range(\y)^2 - Q(G) - n\left(\sum_{w\in V(G)}x_w^2y_w^2\right) \\
        & \leq \range(\x)^2 + T_{*}\big(\spread(\y)\big)\range(\y)^2 + \frac{1}{13},
    \end{align*}
    which proves part~(a).
    
    For part~(b), instead let $j = j_y$, $\delta = \gamma_{j_y}$, and $\varepsilon = 31/9$ in Inequality~\eqref{eq:local_Q_bound}. Since $n/2 \geq 16 = 18(1+\varepsilon)/5$, a similar calculation to the one performed in part~(a) gives
    \begin{align}\label{ineq:Qqr_partb_1}
        Q_{q,r}(G) + \frac{n}{2}\left(\sum_{w\in V(G)}x_w^2y_w^2\right) \geq \Big(1 - T_{j_y}\big(\spread(\y)\big)\Big)\range(\y)^2 - \frac{10j_y}{(31-j_y)^2}.
    \end{align}
    Now apply the same bound in $\Gc$, with eigenvectors $(\y,-\x)$, ordered vertices $(q,s,p,r)$ (instead of $(p,q,r,s)$), and $j = j_x$, so that
    \begin{align}\label{ineq:Qqr_partb_2}
        Q_{p,s}(\Gc) + \frac{n}{2}\left(\sum_{w\in V(G)}x_w^2y_w^2\right) \geq \Big(1 - T_{j_x}\big(\spread(\x)\big)\Big)\range(\x)^2 - \frac{10j_x}{(31-j_x)^2}.
    \end{align}
    The terms in the alternate formula for $Q_{q,r}(G)$ (Equation~\eqref{eq:Qqr_alternate}) come from the first sum in Equation~\eqref{eq:QC_alternate}, while those of $Q_{p,s}(\Gc)$ come from the second. All other terms in Equation~\eqref{eq:QC_alternate} are nonnegative, so $Q_{q,r}(G) + Q_{p,s}(\Gc) \leq Q(G)$. It follows that if we add Inequalities~\eqref{ineq:Qqr_partb_1} and~\eqref{ineq:Qqr_partb_2} and apply Lemma~\ref{lem:quartic_identities} then we get
    \begin{align*}
        1 & = \range(\x)^2 + \range(\y)^2 - Q(G) - n\left(\sum_{w\in V(G)}x_w^2y_w^2\right) \\
        & \leq T_{j_x}\big(\spread(\x)\big)\range(\x)^2 + T_{j_y}\big(\spread(\y)\big)\range(\y)^2 + \frac{10j_x}{(31-j_x)^2} + \frac{10j_y}{(31-j_y)^2},
    \end{align*}
    proving part~(b).
\end{proof}

\subsection{Putting it all together}\label{sec:finishing}

We now combine our eigenvector bounds to rule out the existence of eigensimple graphs on $n \geq 32$ vertices.

\subsubsection{Eleven inequalities}\label{sec:eleven_inequalities}

The final ingredient that we need is a family of functions that are built out of the same ingredients as the various inequalities that we proved over the course of the past few subsections. Recall the definitions of $R$ from Definition~\ref{defn:range_function_R}, $\kappa_*$ from Definition~\ref{defn:kappa_func}, and $T_{*}$ and $T_j$ from Definition~\ref{defn:gamma_Tj}.

\begin{definition}\label{defn:exclusion_functions}
    Define $\delta_3 := 5/13$, $\delta_4 := 4/13$, and $\delta_6 := 5/21$. For $i,j \in \{3,4,6\}$ and $x,y \in [0,1/2]$, define
    \begin{align}
        F_{\textup{A}}(x,y) & := R\left(\frac{7}{43},x\right) R\left(\frac{4}{13},y\right) - R\left(\frac{7}{43},x\right) - R\left(\frac{4}{13},y\right) + \kappa_*(x,y),\notag\\
        F_{\textup{B}}(x,y) & := \frac{25}{13}R\left(\frac{7}{43},x\right)R\left(\frac{5}{13},y\right) - 2R\left(\frac{5}{13},y\right) - \big(1+T_{*}(y)\big)R\left(\frac{7}{43},x\right) + \kappa_{*}(x,y),\notag\\
        F_{i,j}(x,y) & := \left(2 - \frac{10i}{(31-i)^2} - \frac{10j}{(31-j)^2}\right)R(\delta_i,x)R(\delta_j,y)\notag\\
        & \quad - \big(1+T_{i}(x)\big)R(\delta_j,y) - \big(1+T_j(y)\big)R(\delta_i,x) + \kappa_{*}(x,y).\label{eq:Fij}
    \end{align}
\end{definition}

\begin{lemma}\label{lem:eleven_inequalities}
    Let $i,j \in \{3,4,6\}$. Then
    \begin{align*}
        F_{\textup{A}}(x,y) > 0, \qquad F_{\textup{B}}(x,y) > 0, \qquad F_{i,j}(x,y) > 0
    \end{align*}
    for all $x,y \in [0,1/2]$.
\end{lemma}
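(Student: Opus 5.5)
Since the statement only concerns explicit functions of two real variables, the plan is to reduce it to finitely many polynomial positivity statements on the square $[0,1/2]^2$ and certify each one exactly. First, remove the minimum in $\kappa_*$. By Definition~\ref{defn:exclusion_functions}, every one of the eleven functions has the form $P(x,y)+\kappa_*(x,y)$, where $P$ is a polynomial: it is built from $R$, $T_*$ and $T_j$, all of which are polynomials in $x$ or in $y$ of degree at most $2$. Since $\kappa_*=\min\{\kappa(x,y),\kappa(y,x)\}$, it suffices to prove $P(x,y)+\kappa(x,y)>0$ and $P(x,y)+\kappa(y,x)>0$ on the square. That gives $22$ inequalities in principle.

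Several of these coincide and can be discarded before doing any work.
\begin{itemize}
\item $F_{i,j}(x,y)=F_{j,i}(y,x)$ directly from Equation~\eqref{eq:Fij}.
\item The symmetries of $\kappa$ in Lemma~\ref{lem:kappa_properties} relate $\kappa(y,x)$ to $\kappa(x,1-y)$ and similar reflections.
\end{itemize}
I would list exactly which pairs of (function, ordering) are genuinely distinct. This should leave a set of roughly the size the overview mentions.

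Second, clear denominators. Every denominator of $\kappa$ has the form $1+30t^2$ with $t\in\{x,1-x,y,1-y\}$, and these are strictly positive. Multiplying $P+\kappa$ by $D(x,y):=(1+30x^2)(1+30(1-x)^2)(1+30y^2)(1+30(1-y)^2)$ gives a polynomial $N(x,y)$ with rational coefficients, of degree at most $6$ in each variable, and the target becomes $N>0$ on $[0,1/2]^2$. It will also pay to check that the four subtracted rational terms each only involve the denominators in one variable. If so, one can instead multiply through by only the two factors in $x$, or only the two in $y$, term by term, and obtain lower-degree (biquadratic-type) polynomials. This appears to be the route the overview hints at with its ``five biquadratic polynomials''. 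I would try that reduction first because it makes the certificates much smaller.

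Third, certify positivity of each resulting polynomial on the box. I would substitute $x=\tfrac12 u^2/(1+u^2)$ and $y=\tfrac12 v^2/(1+v^2)$, or equivalently use a Putinar/Schm\"udgen-type representation
\begin{align*}
N=\sigma_0+\sigma_1\,x(1-2x)+\sigma_2\,y(1-2y)+\sigma_3\,x(1-2x)y(1-2y),
\end{align*}
with each $\sigma_i$ a sum of squares written as $\m^T M_i\m$ for a monomial vector $\m$. The Gram matrices $M_i$ would be found numerically by semidefinite programming and then rounded to rationals. Positive definiteness of the rounded matrices would then be checked exactly, for example by an exact $LDL^T$ factorization with positive pivots. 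Strict positivity follows because $\sigma_0$ can be taken to be positive definite.

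The main obstacle is the step where some $F$ is nearly tight. The constants $7/43$, $\delta_j$, $\gamma_j$ and $T_*$ were clearly tuned, so the margin is probably small near some boundary point such as $x$ or $y$ equal to $0$ or $1/2$. There a low-degree SOS certificate may fail, or rounding may destroy definiteness. If that happens, my fallback is exact branch-and-bound on the square. I would expand $N$ in the Bernstein basis on subrectangles, subdivide until all Bernstein coefficients are positive, and handle any point where $N$ nearly vanishes separately by a local Taylor or monotonicity argument.
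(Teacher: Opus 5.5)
Your route is sound in principle but genuinely different from the paper's. You keep $\kappa$ exact. You split the minimum in $\kappa_*$ into the two orderings, clear all four denominators to get polynomials of bidegree $(6,6)$, and certify positivity on the box $[0,1/2]^2$ with localized sums of squares. Your Bernstein fallback is in fact guaranteed to terminate, since the inequalities are strict on a compact set. After using $F_{i,j}(x,y)=F_{j,i}(y,x)$ this still leaves thirteen separate box certificates, not five.

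The paper instead relaxes first. Via Cauchy--Schwarz with a tangency parameter $\tau$, it bounds each rational term $\frac{(a+(1-2a)b^2)^2}{1+30b^2}$ above by a polynomial $\zeta_\tau(a,b)$ of degree at most $2$ in each variable, which gives $k_{\sigma,\tau}\le\kappa$. It then removes the minimum in one of two ways: by the reflection monotonicity $K_{\textup{A}}(x,y)\le K_{\textup{A}}(x,1-y)$, or by the symmetrized bound $K_{\rho,\lambda}$, which lies below both orderings by completing a square. The resulting biquadratics $\widetilde F$ are positive on all of $\R^2$, each certified by a single $4\times 4$ positive definite Gram matrix in $(1,x,y,xy)$, so no box multipliers are needed. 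Every case involving $4$ then comes for free. The affine identity $R(\delta_4,t)=\tfrac{19}{32}R(\delta_3,t)+\tfrac{13}{32}R(\delta_6,t)$ and the univariate inequality $\tfrac{19}{32}B_3+\tfrac{13}{32}B_6-B_4>0$ make $F_{4,j}$ exceed a convex combination of $F_{3,j}$ and $F_{6,j}$.

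So your approach loses no slack and is fully mechanical, which is valuable if margins are thin. The paper's trades some slack for five tiny certificates checkable by Sylvester's criterion.

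Three smaller points:
\begin{itemize}
\item Multiplying only by the two $x$-factors cannot produce a polynomial, because the other two terms keep their $y$-denominators. The paper's biquadratics come from this relaxation, not from partial clearing of denominators.
\item The substitution $x=\tfrac12 u^2/(1+u^2)$ never reaches $x=1/2$, so only your Putinar/Schm\"udgen form handles the closed box directly.
\item As written, your proposal is a procedure rather than a proof. No certificate is exhibited, so its correctness rests on actually carrying out the computation.
\end{itemize}
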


We defer the proof of this lemma to Appendix~\ref{app:certificates}. To prove our main result (Theorem~\ref{thm:no_large_eigensimple}), we will show in the next subsection that if an eigensimple graph on $n \geq 32$ vertices exists then at least one of these functions must become nonpositive at some point, contradicting the above lemma and proving nonexistence of such graphs.

\subsubsection{Completion of the proof}\label{sec:finish}

We are now in a position to put the various pieces of the proof of Theorem~\ref{thm:no_large_eigensimple} together and thus establish that there do not exist eigensimple graphs on $n \geq 32$ vertices.

\begin{proof}[Proof of Theorem~\ref{thm:no_large_eigensimple}]
    Suppose (in order to establish a contradiction) that $G$ is an eigensimple graph on $n \geq 32$ vertices. Choose unit $1$-eigenvectors $\x$ and $\y$ of $G$ and $\Gc$, respectively, and choose $p,q,r,s$ as in Lemma~\ref{lem:extremal_path}. For simplicity of notation, define
    \begin{align*}
        d_x & := \max\{d(p),d(s)\}, & r_x & := \range(\x)^2, & s_x & := \spread(\x), \\
        d_y & := \max\{\overline{d}(q),\overline{d}(r)\}, & r_y & := \range(\y)^2, & s_y & := \spread(\y).
    \end{align*}
    We have $s_x, s_y \in (0,1/2]$. Lemma~\ref{lem:quartic_inequalities} gives
    \begin{align}\label{eq:rx_inequality}
        1 \leq r_x + r_y - r_xr_y\kappa_*(s_x,s_y),
    \end{align}
    and Lemma~\ref{lem:range_bound} gives
    \begin{align}\label{ineq:rx_ry_bounds}
        r_x \leq \frac{1}{R(h(j_x),s_x)} \quad \text{and} \quad r_y \leq \frac{1}{R(h(j_y),s_y)}
    \end{align}
    whenever $j_x \in \{2,3,\ldots,d_x\}$ and $j_y \in \{2,3,\ldots,d_y\}$.
    
    Notice that the right-hand side of Inequality~\eqref{eq:rx_inequality} (i.e., $r_x + r_y - r_xr_y\kappa_*(s_x,s_y)$) is nondecreasing in both $r_x$ and $r_y$ since, for example, its partial derivative with respect to $r_x$ is $1 - r_y\kappa_*(s_x,s_y) > 0$, with this final inequality following from the facts that $r_y < 1$ (by Lemmas~\ref{lem:R_lower_bound} and~\ref{lem:range_bound}) and $\kappa_*(s_x,s_y) \leq 1$ (by Lemma~\ref{lem:kappa_properties}). The upper bounds in Inequality~\eqref{ineq:rx_ry_bounds} are also less than $1$ by Lemma~\ref{lem:R_lower_bound}, since $h(j_x),h(j_y) \leq 5/13$. It follows (by plugging those upper bounds into Inequality~\eqref{eq:rx_inequality}) that
    \begin{align}\label{eq:rx_inequality_bounds}
        1 \leq \frac{1}{R(h(j_x),s_x)} + \frac{1}{R(h(j_y),s_y)} - \frac{1}{R(h(j_x),s_x)R(h(j_y),s_y)}\kappa_*(s_x,s_y).
    \end{align}
    We now split into two cases depending on the values of $d_x$ and $d_y$:\medskip
    
    \noindent\underline{\textbf{Case 1:} $\max\{d_x,d_y\} \geq 7$.} Since $\kappa_*(s_x,s_y) = \kappa_*(s_y,s_x)$, we can interchange $G$ and $\Gc$ and thus assume without loss of generality that $d_x \geq 7$. If $d_y \geq 4$, taking $j_x = 7$ and $j_y = 4$ and then multiplying both sides of Inequality~\eqref{eq:rx_inequality_bounds} by
    \[
        R\big(h(j_x),s_x\big)R\big(h(j_y),s_y\big) = R\left(\frac{7}{43},s_x\right)R\left(\frac{4}{13},s_y\right) > 0
    \]
    gives exactly $F_{\textup{A}}(s_x,s_y) \leq 0$, which contradicts Lemma~\ref{lem:eleven_inequalities}.

    If instead $d_y \leq 3$, then Lemma~\ref{lem:uniform_domination}(a) tells us that
    \begin{align}\label{eq:single_y_again}
        1 \leq r_x + T_{*}(s_y)r_y + \frac{1}{13}.
    \end{align}
    Since $T_{*}(s_y) > 0$ by Inequality~\eqref{eq:Tpositive}, we can substitute the upper bounds from Inequality~\eqref{ineq:rx_ry_bounds} with $j_x = 7$ and $j_y = 2$ into Inequality~\eqref{eq:single_y_again}. Adding this bound to Inequality~\eqref{eq:rx_inequality_bounds} with the same thresholds, and then multiplying both sides by
    \[
        R\big(h(j_x),s_x\big)R\big(h(j_y),s_y\big) = R\left(\frac{7}{43},s_x\right)R\left(\frac{5}{13},s_y\right) > 0,
    \]
    gives the inequality $F_{\textup{B}}(s_x,s_y) \leq 0$. This again contradicts Lemma~\ref{lem:eleven_inequalities} and shows that this case cannot happen.\medskip
    
    \noindent\underline{\textbf{Case 2:} $d_x,d_y \leq 6$.} By Lemma~\ref{lem:eigensimple_properties}(c), we have $d_x, d_y \geq 2$. Let $i$ and $j$ be the smallest elements of $\{3,4,6\}$ that are at least $d_x$ and $d_y$, respectively (e.g., if $(d_x,d_y) = (4,5)$ then $(i,j) = (4,6)$). Choose $j_x$ to be $2$, $4$, or $5$ according as $i$ is $3$, $4$, or $6$, and choose $j_y$ in the same way using $j$. Then $h(j_x) = \delta_i$ and $h(j_y) = \delta_j$, while $d_x \leq i$ and $d_y \leq j$. Applying Lemma~\ref{lem:uniform_domination}(b) with its degree bounds equal to $i$ and $j$ gives
    \[
        1 \leq T_{i}(s_x)r_x + T_{j}(s_y)r_y + \frac{10i}{(31-i)^2} + \frac{10j}{(31-j)^2}.
    \]
    Since $T_{i}(s_x) > 0$ and $T_{j}(s_y) > 0$, we can plug in the upper bounds from Inequality~\eqref{ineq:rx_ry_bounds}. Adding the resulting inequality to Inequality~\eqref{eq:rx_inequality_bounds}, and then multiplying both sides by $R(\delta_i,s_x)R(\delta_j,s_y) > 0$, gives the inequality $F_{i,j}(s_x,s_y) \leq 0$. This again contradicts Lemma~\ref{lem:eleven_inequalities} and completes the proof.
\end{proof}

With Theorem~\ref{thm:no_large_eigensimple} proved, we immediately have the following corollary:

\begin{corollary}\label{cor:snn_large_n}
    Suppose $n \geq 32$ is an integer. Then there does not exist a simple graph on $n$ vertices with Laplacian spectrum $S_{n,n}$.
\end{corollary}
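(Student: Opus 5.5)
The corollary follows by combining Theorem~\ref{thm:no_large_eigensimple} with the observation, made right after Definition~\ref{defn:eigensimple}, that every graph with Laplacian spectrum $S_{n,n}$ (with $n \geq 3$) is eigensimple. I will argue by contradiction. Suppose $n \geq 32$ and $G$ is a simple graph on $n$ vertices with $\operatorname{spec} L(G) = \{0,1,\ldots,n-1\}$. I will check the four spectral conditions of Definition~\ref{defn:eigensimple} directly, and then invoke the theorem.

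First, since the eigenvalues of $G$ are distinct and listed in nondecreasing order, we have $\lambda_1(G)=0$, $\lambda_2(G)=1$, and $\lambda_3(G)=2 \geq 2$.

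Second, for the complement I use the identity $L(\Gc) = nI - J - L(G)$ from Section~\ref{sec:standard_facts}. Choose an orthonormal eigenbasis of $L(G)$ containing $\one/\sqrt{n}$, which is possible because $\one$ spans the eigenspace of $0$. Every other basis vector is orthogonal to $\one$, so it is annihilated by $J$. Hence $\one$ is an eigenvector of $L(\Gc)$ with eigenvalue $0$, and each eigenvector of $L(G)$ with eigenvalue $k \in \{1,\ldots,n-1\}$ is an eigenvector of $L(\Gc)$ with eigenvalue $n-k$. The spectrum of $\Gc$ is therefore $\{0\} \cup \{n-k : 1 \leq k \leq n-1\} = \{0,1,\ldots,n-1\}$, so $\lambda_2(\Gc)=1$ and $\lambda_3(\Gc)=2$.

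Thus $G$ is eigensimple on $n \geq 32$ vertices, contradicting Theorem~\ref{thm:no_large_eigensimple}. This is a routine deduction; the only step needing care is the complement spectrum computation, where the eigenvectors must be taken orthogonal to $\one$ so that $J$ acts as zero on them.
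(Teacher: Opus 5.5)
Your proof is correct and follows the paper's route exactly. The paper also deduces the corollary immediately from Theorem~\ref{thm:no_large_eigensimple}, noting that a graph with spectrum $S_{n,n}$ has $\lambda_2 = 1$ and $\lambda_3 = 2$, and (via $L(\Gc) = nI - J - L(G)$) a complement with the same spectrum, so it is eigensimple; your explicit computation of the complement's spectrum just spells out the fact the paper states in its preliminaries.
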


When combined with Theorem~\ref{thm:certified_small_orders}, this proves Conjecture~\ref{conj:Snn}:

\begin{theorem}\label{thm:Snn}
    Suppose $n \geq 2$ is an integer. Then there does not exist a simple graph on $n$ vertices with Laplacian spectrum $S_{n,n}$.
\end{theorem}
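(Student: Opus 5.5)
The plan is a simple case split on $n$ that assembles results already established. For $n \geq 32$ the statement is exactly Corollary~\ref{cor:snn_large_n}, which follows from Theorem~\ref{thm:no_large_eigensimple}. The only link needed there is that, for $n \geq 3$, a graph with Laplacian spectrum $S_{n,n}$ is eigensimple. Its Laplacian spectrum is $\{0,1,\ldots,n-1\}$, so $\lambda_2(G)=1$ and $\lambda_3(G)=2$. Since $L(\Gc) = nI - J - L(G)$ and every nonconstant eigenvector of $L(G)$ is orthogonal to $\one$, the complement has spectrum $\{0\}\cup\{n-k : 1\le k\le n-1\} = S_{n,n}$ as well, so $\lambda_2(\Gc)=1$ and $\lambda_3(\Gc)=2$.

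For $2 \leq n \leq 31$, I would eliminate orders by the known results recalled in Section~\ref{sec:intro}. The case $n \leq 12$ is settled by \cite{FKMN05} and \cite{JPV26}. Every prime $n$ is excluded by Lemma~\ref{lem:Snn_basic_conditions}(c), and every $n \equiv 2,3 \pmod 4$ by Lemma~\ref{lem:Snn_basic_conditions}(b). I would then list the integers $13 \le n \le 31$ and strike out the primes $13,17,19,23,29,31$ and those congruent to $2$ or $3$ mod $4$, namely $14,15,18,22,26,27,30$. The survivors are exactly $\{16,20,21,24,25,28\}$, and these are handled by Theorem~\ref{thm:certified_small_orders}.

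Together these cover every $n\ge 2$, which proves the theorem. There is no genuine obstacle at this stage, since all the difficulty is contained in Theorem~\ref{thm:certified_small_orders} and Lemma~\ref{lem:eleven_inequalities}. The only points needing care are the bookkeeping check that the residue list for $13\le n\le 31$ is exhaustive, and the observation that the eigensimple definition requires $n\ge 3$, which is harmless since $n \geq 32$ in that branch.
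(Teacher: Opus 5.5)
Your proposal is correct and follows the paper's argument. For $n\ge 32$ you use Corollary~\ref{cor:snn_large_n}, via the observation that every graph with spectrum $S_{n,n}$ is eigensimple. For $n\le 31$ the known cases ($n\le 12$, $n$ prime, $n\equiv 2,3 \pmod 4$) leave exactly $\{16,20,21,24,25,28\}$, and these are settled by Theorem~\ref{thm:certified_small_orders}.
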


\section*{Acknowledgements}

This paper was developed with a significant amount of help from, and dozens of conversations with, ChatGPT versions 5.5, 5.6 Sol, and 6 Astra. The accompanying code \cite{JohCertificates} was originally written by ChatGPT-6 Astra. The author has checked, simplified, and clarified all proofs, has verified all computations, and takes full responsibility for their correctness. The author acknowledges support from NSERC Discovery Grant number RGPIN-2022-04098.

\bibliographystyle{alpha}
\bibliography{references}

\appendix

\section{Bounds at vertices of degree 2 or 3}\label{app:513_bound_complete}

In this appendix, we complete the proof of Lemma~\ref{lem:max_513_bound}. We already proved that lemma in the case when $\max(\x)$ is attained at a vertex of degree at least $4$, so all that remains is to prove the degree $2$ and $3$ cases. We start with a lemma about grounded Laplacians \cite{Mie93}, which are matrices of the form $L(G)+\diag(\d)$, where $\d$ is a nonnegative real vector and every connected component of $G$ contains a vertex $v$ with $d_v > 0$.

\begin{lemma}\label{lem:grounded}
    Let $G$ be a simple graph on $n\geq1$ vertices, let $\d\in\mathbb Z_{\geq0}^n$ have exactly $k$ entries equal to $0$, and suppose that $K:=L(G)+\diag(\d)$ is a grounded Laplacian. Then the following hold, where $\z:=K^{-1}\one$ in parts~(c) and~(d):
    \begin{itemize}
        \item[(a)] $K$ is positive definite (and thus invertible).

        \item[(b)] $K^{-1}\geq0$ entrywise.

        \item[(c)] $\displaystyle \sum_{v\in V(G)}z_v\leq n+\frac{k(k+1)(2k+7)}{6}$.

        \item[(d)] $\displaystyle \sum_{v\in W}z_v\leq(k+1)|W|+\frac{k(k+1)(2k+1)}{6}$ \ for every $W\subseteq V(G)$.
    \end{itemize}
\end{lemma}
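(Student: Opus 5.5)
The plan is to prove the four parts in order, with (c) and (d) following from a single decomposition of $\z$. Write $U$ for the $k$ vertices with $d_v=0$ (``ungrounded'') and $B$ for the rest (``grounded'', so $d_v\geq 1$ since $\d$ is integral).

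\emph{Parts (a) and (b).} For (a) I compute
\[
\x^TK\x=\sum_{\{i,j\}\in E(G)}(x_i-x_j)^2+\sum_v d_vx_v^2\geq 0 .
\]
Equality forces $\x$ to be constant on each component and zero at each grounded vertex. Every component contains a grounded vertex, so $\x=\0$. For (b), $K$ is a positive definite matrix with nonpositive off-diagonal entries, i.e.\ a nonsingular M-matrix, so $K^{-1}\geq 0$. If a self-contained argument is preferred: apply a minimum principle to $K\w=\e_v$. At a negative minimum $u$ of $\w$ we would get $(K\w)_u<0$, which is impossible. I will also record the companion maximum principle: column $v$ of $K^{-1}$ attains its maximum at $v$, so $(K^{-1})_{uv}\leq (K^{-1})_{vv}$.

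\emph{Decomposition.} Write $\z=\mathbf h+\mathbf f$, where $\mathbf f$ vanishes on $B$ and $\mathbf f_U=K_U^{-1}\one$. Here $K_U$ is the principal submatrix of $K$ on $U$. It is again a grounded Laplacian on $G[U]$, with grounding equal to the number of $B$-neighbours, and each component of $G[U]$ is grounded since $G$'s components are. Then $\mathbf h$ agrees with $\z$ on $B$ and is ``harmonic'' on $U$. Indeed $\mathbf h_U=K_U^{-1}A_{UB}\z_B$, which has nonnegative weights with row sums at most $1$, so $\mathbf h_u\leq M:=\max_{b\in B}z_b$.

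\emph{Key bound $M\leq k+1$.} Summing the rows of $K_U\mathbf f_U=\one$ gives
\[
\sum_{u\in U}|N(u)\cap B|\,f_u=k,
\]
so $\sum_{w\in N(g)\cap U}f_w\leq k$ for any $g\in B$. Now take $g\in B$ with $z_g=M$. The equation $(K\z)_g=1$ reads $d_gM+\sum_{w\in N(g)}(M-z_w)=1$. Neighbours in $B$ contribute nonnegatively, and neighbours $w\in U$ satisfy $z_w\leq M+f_w$. Hence $M\leq d_gM\leq 1+k$.

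\emph{Key bound on $\one^T\mathbf f_U$.} I expect this to be the main obstacle. I will show $\one^TK_U^{-1}\one\leq k(k+1)(2k+1)/6$ using effective resistance. The diagonal entry $(K_U^{-1})_{uu}$ is the effective resistance from $u$ to ground, with edges of resistance $1$ and grounding edges of resistance $1/d\leq1$. It is therefore at most $\delta_u$, the graph distance from $u$ to $B$ counting the final step. By the maximum principle from (b) and symmetry,
\[
(K_U^{-1})_{uv}\leq\min\{(K_U^{-1})_{uu},(K_U^{-1})_{vv}\}\leq\min\{\delta_u,\delta_v\}.
\]
Shortest paths show every distance layer $1,\ldots,\max\delta$ is nonempty. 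Hence the sorted distances satisfy $\delta_{(i)}\leq i$, and so
\[
\sum_{u,v\in U}\min\{\delta_u,\delta_v\}\leq\sum_{i,j=1}^k\min\{i,j\}=\frac{k(k+1)(2k+1)}{6}.
\]
The path pinned at one end shows this is tight.

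\emph{Conclusion.} For (d), I have $\sum_{v\in W}z_v\leq |W|\,M+\one^T\mathbf f_U$, which gives the claim. For (c), summing all rows of $K\z=\one$ gives $\sum_v d_vz_v=n$, so $\sum_{B}z_v\leq n$. Moreover $\sum_U z_v\leq kM+\one^T\mathbf f_U\leq k(k+1)+k(k+1)(2k+1)/6=k(k+1)(2k+7)/6$, and adding the two bounds gives (c).
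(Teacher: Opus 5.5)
Your proof is correct, but it takes a genuinely different route from the paper's.

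\textbf{The paper's route.} The paper never splits off the ungrounded set. It works directly with $\z$, sets $M:=\max_{v\notin Z}z_v$, and repeatedly uses one identity: summing the equations of $K\z=\one$ over a vertex set $S$, applied to superlevel sets. Taking $S=\{v:z_v>M\}\cup\{m\}$ gives $M\le k+1$. Taking $S$ to be the $j$ vertices with the largest excess $z_v-M$ gives gap bounds $x_j-x_{j+1}\le j$, which telescope to $\sum_j x_j\le\sum_j j^2$.

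\textbf{Your route.} You decompose $\z=\mathbf h+\mathbf f$, where $\mathbf h$ is the harmonic extension of $\z_B$ (so $\mathbf h\le M$) and $\mathbf f_U=K_U^{-1}\one$ is the Green's function of the ungrounded set. You get $M\le k+1$ from the flux identity $\sum_{u\in U}|N(u)\cap B|f_u=k$. You bound $\one^TK_U^{-1}\one$ by combining four facts:
\begin{itemize}
\item the effective-resistance interpretation of $(K_U^{-1})_{uu}$;
\item Rayleigh monotonicity;
\item the maximum principle $(K_U^{-1})_{uv}\le\min\{(K_U^{-1})_{uu},(K_U^{-1})_{vv}\}$;
\item the distance-layer count $\delta_{(i)}\le i$.
\end{itemize}

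\textbf{What each buys.} The paper's argument is shorter and fully self-contained: it needs nothing beyond that one summed equation and $\z\ge\0$. Yours imports standard but external facts from electrical network theory. In exchange you get the stronger entrywise estimate $(K_U^{-1})_{uv}\le\min\{\delta_u,\delta_v\}$, expressed in terms of distances to the grounded set. You also get an extremal example, the path grounded at one end, showing that your bound on $\one^TK_U^{-1}\one$ is sharp.

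\textbf{Two small slips.}
\begin{itemize}
\item The grounding resistance at $u\in U$ is $1/|N(u)\cap B|$, not $1/d$.
\item In your self-contained minimum-principle sketch for (b), an arbitrary negative minimizer $u$ only gives $(K\w)_u\le 0$. You need to choose a minimizer that is grounded or has a non-minimizing neighbour; one exists because every component contains a grounded vertex. Citing the M-matrix property already covers this.
\end{itemize}
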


\begin{proof}
    Parts~(a) and~(b) are standard (see \cite{DB13}, for example), and they imply $\z \geq \0$. We prove part~(d) next. For any $S\subseteq V(G)$, summing the equations $K\z = \one$ over $S$ gives
    \begin{align}\label{eq:grounded_laplac_sum}
        \sum_{v\in S} d_vz_v + \sum_{\substack{v\in S,\ w\notin S\\\{v,w\}\in E(G)}} (z_v - z_w) = |S|,
    \end{align}
    since the contributions from edges with both $v$ and $w$ in $S$ cancel.

    Define $Z := \{v\in V(G) : d_v=0\}$ and $M := \max_{v \notin Z}z_v$. Choose $m \in V(G) \setminus Z$ with $z_m = M$, and let $U := \{v\in V(G) : z_v>M\} \subseteq Z$. Apply Equation~\eqref{eq:grounded_laplac_sum} with $S = U \cup \{m\}$. We have $z_v \geq M$ when $v \in S$ and $z_v \leq M$ when $v \in V(G) \setminus S$, so the second sum is nonnegative. The first sum equals $d_{m}M$, so we get
    \begin{align*}
        M \leq d_{m}M \leq |S| \leq k + 1.
    \end{align*}

    We now give names to the vertices in $U$. Write $U = \{v_1, v_2, \ldots, v_\ell\}$, ordered so that $x_i := z_{v_i} - M$ satisfies $x_1 \geq x_2 \geq \cdots \geq x_\ell > 0$, and set $x_{\ell+1} := 0$. For each $1 \leq j \leq \ell$, substitute $S = \{v_1, v_2, \ldots, v_j\}$ into Equation~\eqref{eq:grounded_laplac_sum}. The first sum equals zero, and every summand in the second sum is nonnegative. At least one edge leaves $S$, since every connected component of $G$ contains a vertex outside $Z$. If $\{v_i,w\}$ is such an edge, with $v_i \in S$ and $w \notin S$, then
    \[
        z_{v_i} - z_w = (x_i + M) - z_w \geq (x_j + M) - (x_{j+1} + M) = x_j - x_{j+1}.
    \]
    It follows that $x_j - x_{j+1} \leq |S| = j$, so
    \begin{align*}
        \sum_{j=1}^{\ell}x_j = \sum_{j=1}^{\ell}j(x_j - x_{j+1}) \leq \sum_{j=1}^{\ell}j^2 \leq \sum_{j=1}^{k}j^2 = \frac{k(k+1)(2k+1)}{6}.
    \end{align*}
    We thus have, for every $W \subseteq V(G)$,
    \begin{align*}
        \sum_{v\in W}z_v \leq M|W|+\sum_{v\in W\cap U}(z_v-M) \leq (k+1)|W|+\frac{k(k+1)(2k+1)}{6},
    \end{align*}
    which proves part~(d).

    Finally, summing all equations in $K\z=\one$ gives $\sum_{v\in V(G)}d_vz_v=n$. Since $\z\geq\0$ and $d_v\geq1$ outside $Z$, we have $\sum_{v\notin Z}z_v\leq n$. Applying part~(d) with $W=Z$ therefore gives
    \begin{align*}
        \sum_{v\in V(G)}z_v \leq n + k(k+1) + \frac{k(k+1)(2k+1)}{6} = n + \frac{k(k+1)(2k+7)}{6},
    \end{align*}
    which proves part~(c).
\end{proof}

\subsection{A maximum at a vertex of degree 3}

We next use the bound from Lemma~\ref{lem:grounded}(d) to show that, under a restriction on the common neighbors of $N(v)$, a vertex $v$ of degree $3$ has a neighbor of very high degree.

\begin{lemma}\label{lem:boundary}
    Let $G$ be an eigensimple graph on $n \geq 32$ vertices and let $v \in V(G)$ have $d(v) = 3$. If at most two vertices $w \notin N(v) \cup \{v\}$ have $N(v) \subseteq N(w)$, then some neighbor of $v$ has degree at least $14$.
\end{lemma}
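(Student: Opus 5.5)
The plan is to argue by contradiction: assume every neighbor of $v$ has degree at most $13$, and exhibit a test vector $\mathbf{u}$ with $\mathbf{u}^T\big(L - 2I + \tfrac{2}{n}J + \x\x^T\big)\mathbf{u} < 0$. This contradicts Lemma~\ref{lem:eigensimple_properties}(d). Equivalently, we want a two-dimensional subspace of $\one^\perp$ on which the Rayleigh quotient of $L$ is below $2$, which is incompatible with $\lambda_3(G) \geq 2$.

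Write $A := N(v) = \{a_1,a_2,a_3\}$ and $B := V(G) \setminus (A \cup \{v\})$, so $|B| = n-4 \geq 28$. Every edge from $A$ to $B$ is counted by $d(a_i)-1$, hence
\[
    e(A,B) \leq \sum_i \big(d(a_i)-1\big) \leq 36.
\]
The structure is therefore a small \emph{core} $\{v\}\cup A$ attached to the large set $B$ through few edges. For $w \in B$ put $d_w := |N(w)\cap A| \in \{0,1,2,3\}$ and form the grounded Laplacian $K := L(G[B]) + \diag(\d)$. This is a genuine grounded Laplacian: every component of $G[B]$ must touch $A$, since otherwise $G$ would be disconnected.

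The natural low-energy vectors are the ones that are harmonic on $B$: their values on $B$ are determined by the values on $A$ through $K^{-1}$. Concretely, take $\mathbf{u}$ with $u_v = 1$ and $u_a = \alpha$ on $A$. On $B$, the minimal-energy extension of boundary value $\alpha$ on $A$ (with $B$ pulled toward a constant $\beta$) is governed by $\z = K^{-1}\one$. The energy $\mathbf{u}^TL\mathbf{u}$ then becomes an explicit quadratic in $(1,\alpha,\beta)$ whose coefficients involve $e(A,B) \leq 36$ and the sums $\sum_{w} z_w$ and $\sum_{w\in W} z_w$. Lemma~\ref{lem:grounded}(c),(d) bound these sums in terms of the number $k$ of vertices $w\in B$ with $d_w = 0$ (and of $|W|$).

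The hypothesis that at most two vertices $w\notin N(v)\cup\{v\}$ satisfy $N(v)\subseteq N(w)$ says exactly that at most two $w\in B$ have $d_w=3$. Combined with $\sum_w d_w = e(A,B) \leq 36$, this will be used to control how the grounding is distributed. We then optimize over $(\alpha,\beta)$, subject to $\mathbf{u}\perp\one$, to make the Rayleigh quotient small. The mass of $\mathbf{u}$ on the large set $B$ dominates $\|\mathbf{u}\|^2$, while the energy stays bounded by roughly $e(A,B)$ times a constant. As a result the quotient is below $2$ whenever the degrees of the $a_i$ are at most $13$, given $n\geq 32$. Two such vectors (the one above, together with, for example, an analogous vector supported near $v$, or a combination orthogonal to $\x$) span a two-dimensional subspace of $\one^\perp$ with quotient less than $2$, which contradicts $\lambda_3(G)\geq 2$. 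Alternatively, a single vector suffices if the $\x\x^T$ term can be absorbed using $x_v^2 \leq \|\x\|^2$ and the eigenvector equation at $v$.

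The main obstacle is the $\x\x^T$ term, or equivalently the need for a second independent test direction. Controlling $\langle \x,\mathbf{u}\rangle$ requires information about $\x$ near $v$. I would first try to choose $\mathbf{u}$ orthogonal to $\x$ by mixing in $\e_v$ with a free coefficient, and then use the eigenvector equation at $v$ together with Lemma~\ref{lem:eigensimple_properties}(e) to bound the correction. A secondary difficulty is tuning constants so that the threshold comes out at exactly $14$. That is where the precise constants $k(k+1)(2k+7)/6$ and $k(k+1)(2k+1)/6$ from Lemma~\ref{lem:grounded} enter, with $k$ bounded using $e(A,B)\leq 36$ and the hypothesis that at most two vertices of $B$ have $d_w = 3$.
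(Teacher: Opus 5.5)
Your plan has a genuine gap: the contradiction it aims for cannot come from the data you use. Your test vectors are determined by the graph structure around $v$: the values on $\{v\}\cup A$ plus a harmonic extension into $B$ via $K^{-1}$. They do not depend on $\x$. The hypotheses you impose on that structure cannot force even one vector $\mathbf{u}\perp\one$ with Rayleigh quotient below $2$.

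For example, take $n=32$, let $B$ induce $K_{28}$, and join each $a_i$ to $12$ vertices of $B$ so that no vertex of $B$ is adjacent to all three. Every hypothesis you use holds, and every $a_i$ has degree $13$. The edges inside $B$ contribute exactly $28\|\mathbf{w}\|^2$, where $\mathbf{w}$ is the part of $\mathbf{u}_B$ orthogonal to constants. On the remaining directions (values at $v,a_1,a_2,a_3$ and a constant on $B$), the smallest nonzero quotient is about $2.85$. A short computation absorbing the coupling into the constant $28$ gives $\lambda_2(G)>2$ for this graph. So the two-vector route is impossible. The single-vector route would have to draw everything from $\lambda_2(G)=1$ through the $\x\x^T$ term, and you give no mechanism for this; mixing in $\e_v$ keeps $\mathbf{u}$ local.

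The grounded-Laplacian step also fails as set up. In $K=L(G[B])+\diag(\d)$, the ungrounded vertices are those of $B$ with \emph{no} neighbor in $A$. Since $e(A,B)\le 36$, there are at least $n-40$ of them. Lemma~\ref{lem:grounded}(c),(d) then only give bounds cubic in $n$. The hypothesis that at most two $w$ have $d_w=3$ does not bound this $k$ at all.

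The missing idea is to work in $\Gc$ and use a $1$-eigenvector $\y$ of $\Gc$, i.e.\ the eigenvalue $n-1$ of $L(G)$; the condition $\lambda_3(G)\ge 2$ is never needed. Let $S=N(v)$ and $T=V(G)\setminus(S\cup\{v\})$, and set $K=L(\Gc[T])+\diag(B\one)$ with $[B\one]_i=|\overline{N}(i)\cap S|$.

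\begin{itemize}
    \item $K$ is grounded because $v$ is not a cut vertex of $\Gc$. Its ungrounded vertices are \emph{exactly} the $w$ with $N(v)\subseteq N(w)$. This is why the hypothesis gives $k\le 2$ and hence $\sum_{i\in W}z_i\le 3|W|+5$.
    \item The eigenvector equations of $\Gc$ on $T$ give $K\y_T=y_v\one+B\y_S$.
    \item Normalize so that $\max_{u\in S}|y_u|=1$. Lemma~\ref{lem:eigensimple_properties}(e) in $\Gc$ at $v$ gives $(n-4)|y_v|\le 3$.
    \item Since $K^{-1}\ge 0$ and $K^{-1}B\one=\one$, we get $|y_i|\le z_i|y_v|+1$ for $i\in T$.
    \item Applying Lemma~\ref{lem:eigensimple_properties}(e) at some $u\in S$ with $|y_u|=1$ yields $n-1-d(u)\le d(u)-1+3(3d(u)+3)/(n-4)$. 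This is impossible when $d(u)\le 13$ and $n\ge 32$.
\end{itemize}
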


\begin{proof}
    Define $S := N(v)$ and $T := V(G) \setminus (S \cup \{v\})$, so that $V(G)$ is the disjoint union of $\{v\}$, $S$, and $T$. Let $B$ be the $|T| \times |S|$ matrix with rows indexed by $T$ and columns indexed by $S$, whose $(i,j)$-entry is $1$ if $\{i,j\} \in E(\Gc)$ and $0$ otherwise. In particular, notice that $[B\one]_i = |\overline{N}(i) \cap S| \geq 0$.
    
    For every connected component $C$ of $\Gc[T]$, there is an edge of $\Gc$ joining a vertex of $C$ to a vertex of $S$, since otherwise $v$ would be a cut vertex of $\Gc$, contradicting Lemma~\ref{lem:eigensimple_properties}(b). It follows that every connected component of $\Gc[T]$ has a vertex $i$ for which $[B\one]_i = |\overline{N}(i) \cap S| > 0$, so
    \begin{align*}
        K := L(\Gc[T]) + \diag(B\one)
    \end{align*}
    is a grounded Laplacian. Furthermore, we have $[B\one]_i = 0$ if and only if $S \subseteq N(i)$. By hypothesis, there are at most two such vertices $i$. We can thus apply Lemma~\ref{lem:grounded}(d) with $k \leq 2$ and $\z := K^{-1}\one$ to see that
    \begin{align}\label{ineq:3wp5}
        \sum_{i\in W}z_i \leq 3|W| + 5 \ \ \text{for every} \ \ W \subseteq T.
    \end{align}

    Now choose a $1$-eigenvector $\y$ of $\Gc$. Our next goal is to derive an upper bound on $|y_i|$ for $i \in T$. Since $T \subseteq \overline{N}(v)$, every vertex $i \in T$ satisfies
    \begin{align*}
        [K\y_T]_i & = \big[L(\Gc[T])\y_T\big]_i + \big[\diag(B\one)\y_T\big]_i \\
        & = \left(\big|\overline{N}(i) \cap T\big|y_i - \sum_{j\in\overline{N}(i)\cap T}y_j\right) + \big|\overline{N}(i)\cap S\big|y_i \\
        & = \Big(\big|\overline{N}(i)\big| - 1\Big)y_i - \sum_{j\in\overline{N}(i)\cap T}y_j \\
        & = \sum_{j\in\overline{N}(i)}y_j - \sum_{j\in\overline{N}(i)\cap T}y_j \\
        & = y_v + \sum_{j\in\overline{N}(i)\cap S}y_j = y_v + \big[B\y_S\big]_i,
    \end{align*}
    where the fourth equality comes from the eigenvector equation (Equation~\eqref{eq:eigenvector_equation}) applied at $i$, and the fifth equality comes from the fact that $V(G)$ is the disjoint union of $\{v\}$, $S$, and $T$. Since this holds for all $i \in T$, we have
    \begin{align}\label{eq:yu_proof_b}
        K\y_T = y_v\one + B\y_S.
    \end{align}
    Lemma~\ref{lem:eigensimple_properties}(e), applied to $\Gc$ and an arbitrary $u \in V(G)$, then gives
    \begin{align}\label{eq:yu_proof}
         \big(n-1-d(u)\big)y_u = -\sum_{w \in N(u)} y_w.
    \end{align}
    Suppose (for the sake of establishing a contradiction) that $\y_S = \0$. Then the right-hand side of Equation~\eqref{eq:yu_proof} equals zero when $u = v$, so $y_v = 0$. Plugging that into Equation~\eqref{eq:yu_proof_b} then shows that $K\y_T = \0$. Since $K$ is invertible by Lemma~\ref{lem:grounded}(a), this implies $\y_T = \0$, so $\y = \0$, contradicting the fact that $\y$ is an eigenvector.
    
    It follows that $\y_S \neq \0$. We can thus rescale $\y$ so that $\max_{u \in S}\big\{|y_u|\big\} = 1$. Equation~\eqref{eq:yu_proof} with $u = v$ then gives
    \begin{align}\label{eq:yu_proof_d}
        (n-4)|y_v| = \left|\sum_{w \in S}y_w\right| \leq 3.
    \end{align}
    Furthermore, since Laplacian matrices have row sums equal to zero, we have $K\one = B\one$, so $K^{-1}B\one = \one$. By Lemma~\ref{lem:grounded}(b), $K^{-1} \geq 0$ entrywise, so $K^{-1}B \geq 0$ entrywise and each row of $K^{-1}B$ sums to $1$. Multiplying Equation~\eqref{eq:yu_proof_b} on the left by $K^{-1}$ therefore gives, for every $i \in T$,
    \begin{align}\label{eq:yu_proof_c}
        |y_i| \leq |y_v|[K^{-1}\one]_i + \big|[K^{-1}B\y_S]_i\big| \leq z_i|y_v| + \sum_{w \in S}[K^{-1}B]_{i,w}|y_w| \leq z_i|y_v| + 1.
    \end{align}

    Now choose $u \in S$ with $|y_u| = 1$ (recall that we scaled $\y$ so that $\max_{u \in S}\big\{|y_u|\big\} = 1$, so such a $u$ exists) and define $W := N(u) \cap T$. Of the $d(u)$ neighbors of $u$, one is $v$, exactly $|W|$ lie in $T$, and the other $d(u) - |W| - 1$ lie in $S$. Using Equation~\eqref{eq:yu_proof}, followed by Inequalities~\eqref{eq:yu_proof_c}, \eqref{eq:yu_proof_d}, and~\eqref{ineq:3wp5} (in that order) then gives
    \begin{align*}
        n-1-d(u) & \leq \sum_{w\in N(u)}|y_w| \\
        & \leq |y_v| + \big(d(u) - |W| - 1\big) + \sum_{i \in W}|y_i| \\
        & \leq |y_v| + \big(d(u) - |W| - 1\big) + \sum_{i \in W}\big(z_i|y_v| + 1\big) \\
        & = d(u) - 1 + |y_v|\left(1+\sum_{i\in W}z_i\right) \\
        & \leq d(u) - 1 + \frac{3(3|W|+6)}{n-4} \leq d(u) - 1 + \frac{3(3d(u)+3)}{n-4},
    \end{align*}
    where the final inequality uses $|W| \leq d(u) - 1$. Rearranging gives
    \begin{align*}
        n \leq 2d(u) + \frac{3(3d(u)+3)}{n-4}.
    \end{align*}
    If $d(u) \leq 13$ then $n \geq 32$ gives $n \leq 26 + 126/28 = 61/2 < 32$, which is a contradiction that completes the proof.
\end{proof}

We now apply this lemma to a vertex of $G$ where its $1$-eigenvector attains its maximum. Its eigenvector equation first gives the required restriction on common neighbors. The neighbor of degree at least $14$ then provides an additional bound on its entries.

\begin{proof}[Proof of the degree-$3$ case of Lemma~\ref{lem:max_513_bound}]
    Suppose that $x_p = \max(\x)$ for some vertex $p$ of $G$ with $d(p) = 3$. Rescale $\x$ so that $x_p = \max(\x) = 1$, making it our objective to show that $\|\x_{+}\|^2 \geq 13/5$.

    The eigenvector equation (Equation~\eqref{eq:eigenvector_equation}) at $p$ gives $\sum_{w\in N(p)} x_w = 2$. Each of these three $x_w$ entries is at most $1$, so each is also nonnegative (since if one were negative then the other two entries could only contribute at most $2$ to the sum). Choose $u \in N(p)$ with largest entry $M := x_u \geq 2/3$, and set $U := N(u) \setminus (N(p)\cup\{p\})$ and $m := |U|$. The $d(u) - m - 1$ neighbors $w$ of $u$ in $N(p)$ all have $x_w \leq M$. The eigenvector equation at $u$ therefore gives
    \begin{align*}
        \big(d(u)-1\big)M = \sum_{w \in N(u)}x_w = 1 + \sum_{w \in N(u) \cap N(p)}x_w + \sum_{w \in U}x_w \leq 1 + \big(d(u) - m - 1\big)M + \sum_{w\in U}x_w,
    \end{align*}
    and hence $\sum_{w \in U}x_w \geq mM - 1$.\smallskip

    \noindent\underline{\textbf{Case 1:} $m \geq 3$.} Then
    \begin{align*}
        \sum_{\substack{w\in U\\x_w>0}}x_w \geq \sum_{w \in U}x_w \geq mM - 1 \geq \frac{2m}{3} - 1 \geq \frac{m}{3}.
    \end{align*}
    Applying the Cauchy--Schwarz inequality to the two sums in the inequality
    \begin{align*}
        \|\x_{+}\|^2 \geq 1 + \sum_{w \in N(p)}x_w^2 + \sum_{\substack{w\in U\\x_w>0}}x_w^2,
    \end{align*}
    and recalling that $\sum_{w\in N(p)}x_w = 2$, gives
    \begin{align*}
        \|\x_{+}\|^2 \geq 1 + \frac{2^2}{3} + \frac{(m/3)^2}{m} \geq \frac{8}{3} \geq \frac{13}{5},
    \end{align*}
    as desired.\smallskip

    \noindent\underline{\textbf{Case 2:} $m \leq 2$.} Every vertex $w \notin N(p) \cup \{p\}$ with $N(p) \subseteq N(w)$ is adjacent to $u$, since $u \in N(p)$, and therefore belongs to $U$. Since $|U| = m \leq 2$, Lemma~\ref{lem:boundary} tells us that there exists a vertex $v \in N(p)$ with degree $d(v) \geq 14$.

    Since $v \in N(p)$, we have $0 \leq x_v \leq 1$. Define
    \begin{align*}
        P := 1 + \sum_{w \in N(p)}x_w^2 = \sum_{w \in N(p) \cup \{p\}}x_w^2 \leq \|\x_{+}\|^2.
    \end{align*}
    We have $\sum_{w \in N(p) \setminus \{v\}}x_w = 2 - x_v$, so the Cauchy--Schwarz inequality gives
    \begin{align}\label{ineq:P_t_bound}
        \|\x_{+}\|^2 \geq P \geq 1 + x_v^2 + \frac{(2-x_v)^2}{2} = \frac{1}{2}\big(3x_v^2 - 4x_v + 6\big).
    \end{align}
    If $x_v \leq 3/14$ then the quadratic on the right exceeds $13/5$, so we assume from now on that $x_v > 3/14$.
    
    Set $W := N(v) \setminus (N(p) \cup \{p\})$. Since $p \in N(v)$, we have $|W| \leq d(v) - 1$. Furthermore, $x_w \geq 0$ when $w \in N(p)$, and $W$ is disjoint from $N(p) \cup \{p\}$, so
    \begin{align*}
        \sum_{\substack{w \in W\\x_w>0}}x_w^2 \leq \|\x_{+}\|^2 - P.
    \end{align*}
    If we write $N(v)$ as the disjoint union of $\{p\}$, $N(v) \cap N(p)$, and $W$, then the eigenvector equation at $v$ gives
    \begin{align}\label{eq:t_eigvec}
        \big(d(v)-1\big)x_v & = 1 + \sum_{w \in N(v) \cap N(p)}x_w + \sum_{w \in W}x_w.
    \end{align}
    Since $N(v) \cap N(p) \subseteq N(p) \setminus \{v\}$, and $\sum_{w \in N(p) \setminus \{v\}}x_w = 2 - x_v$, the central sum in Equation~\eqref{eq:t_eigvec} is no larger than $2 - x_v$. Discarding the nonpositive terms from the rightmost sum and then applying the Cauchy--Schwarz inequality gives
    \begin{align*}
        \big(d(v)-1\big)x_v & \leq 3 - x_v + \sum_{\substack{w \in W\\x_w>0}}x_w \\
        & \leq 3 - x_v + \sqrt{|W|\sum_{\substack{w \in W\\x_w>0}}x_w^2} \\
        & \leq 3 - x_v + \sqrt{\big(d(v)-1\big)\big(\|\x_{+}\|^2-P\big)}.
    \end{align*}
    
    Since $d(v) \geq 14$ and $x_v > 3/14$, rearranging this inequality gives
    \begin{align*}
        0 < d(v)x_v - 3 \leq \sqrt{(d(v)-1)\big(\|\x_{+}\|^2 - P\big)}.
    \end{align*}
    Squaring and dividing both sides by $d(v) - 1 > 0$ gives
    \begin{align*}
        \|\x_{+}\|^2 \geq P + \frac{(d(v)x_v-3)^2}{d(v)-1} \geq \frac{1}{2}\big(3x_v^2 - 4x_v + 6\big) + \frac{(d(v)x_v-3)^2}{d(v)-1},
    \end{align*}
    where the final inequality comes from Inequality~\eqref{ineq:P_t_bound}. Since $x_v > 3/14$ and $d(v) \geq 14$, it is straightforward to verify that the right-hand side is at least $1126/431$, which is larger than $13/5$. This completes the proof.
\end{proof}

\subsection{A maximum at a vertex of degree 2}\label{sec:degree_two_neighborhoods}

All that remains in order to prove Lemma~\ref{lem:max_513_bound} is the degree $2$ case. We begin this case with one more helper lemma.

\begin{lemma}\label{lem:degree_two_schur}
    Let $G$ be an eigensimple graph on $n \geq 32$ vertices, let $p \in V(G)$ have $d(p) = 2$, and label its neighbors by $a$ and $b$. Suppose that $b$ has at most seven neighbors outside $\{p,a\}$ and that some vertex outside $\{p,a,b\}$ is nonadjacent to both $a$ and $b$. Then $a$ and $b$ are adjacent, and at most two vertices outside $\{p,a,b\}$ are nonadjacent to both of them.
\end{lemma}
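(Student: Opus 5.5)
I plan to argue by contradiction with a Courant--Fischer argument. The hypothesis $\lambda_3(G) \geq 2$ says that every $3$-dimensional subspace of $\R^n$ containing $\one$ has a vector of Rayleigh quotient at least $2$. Equivalently, by Lemma~\ref{lem:eigensimple_properties}(d), there is no $2$-dimensional subspace of $\one^\perp$ on which $\z^TL\z < 2\|\z\|^2$.

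Write $S := \{p,a,b\}$, $T := V(G)\setminus S$, and $U := \{w \in T : w \notin N(a)\cup N(b)\}$. By hypothesis $U \neq \emptyset$. Suppose that either $\{a,b\} \notin E(G)$ or $|U| \geq 3$. I will build two test vectors with disjoint supports, each orthogonal to $\one$, each with Rayleigh quotient strictly below $2$. Disjoint supports make them orthogonal, and since no edge joins $S$ to $U$ the cross term $\z_1^T L \z_2$ vanishes. Their span then contradicts $\lambda_3(G)\geq 2$.

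The first vector lives near $p$. The only edges leaving $S$ are those from $a$ and $b$ into $T$, and $b$ contributes at most seven of them. I would take $\z_1$ to equal $\alpha$ on $p$, $\beta$ on $a$, $\gamma$ on $b$, and a small negative constant on $T\setminus U$ to force $\z_1 \perp \one$. Since $n \geq 32$, this constant costs only $O(1/n)$ in both numerator and denominator. I then optimize $(\alpha,\beta,\gamma)$ so that the numerator $\sum_{\{i,j\}\in E(G)}(z_i-z_j)^2$ is small relative to $\alpha^2+\beta^2+\gamma^2$.

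The useful feature is that $p$ has degree $2$. Taking $\beta=\gamma=\alpha$ kills the edges inside $S$, so the cost comes only from the cut edges at $a$ and $b$. To avoid dependence on $d(a)$, which is uncontrolled, I instead let $\beta$ be small and put the weight on $p$ and $b$. The configuration where $a$ and $b$ are \emph{nonadjacent} is exactly what should let this work. Then the path $a - p - b$ is induced, and a vector like $(\beta,\alpha,\gamma) = (0,1,t)$ has numerator $1 + (1-t)^2 + t^2\,|N(b)\cap T| \leq 1 + (1-t)^2 + 7t^2$. Optimizing over $t$ gives a quotient below $2$, with room to absorb the $O(1/n)$ correction. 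So the case $\{a,b\} \notin E(G)$ should be handled by $\z_1$ together with a second vector $\z_2 := \e_w - \e_{w'}$-type vectors on $U$. If $|U| = 1$ only, I would use the single vertex of $U$ with a compensating constant.

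The second vector lives on $U$. Vertices of $U$ have all their neighbours in $T$, and $U$ is separated from $S$. When $|U|\geq 3$, I would use a vector supported on $U$ with zero sum, such as $\e_{w_1}+\e_{w_2}-2\e_{w_3}$, or a best choice among such combinations. Its numerator counts edges from $U$ to $T\setminus U$ and edges within $U$. This is the main obstacle: the vertices of $U$ could have large degree, so such a vector need not have small Rayleigh quotient by itself.

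To handle this I would switch to the complement for this piece. In $\Gc$, every $w \in U$ is adjacent to all of $p$, $a$, $b$, so $\{p,a,b\}\cup U$ contains a large complete bipartite-like structure. Using $\lambda_3(\Gc)\geq 2$ in the same way, or equivalently the eigenvector identities of Lemma~\ref{lem:eigensimple_properties}(e) applied to $\y$ at $p$, where $(n-3)y_p = -(y_a+y_b)$, should force $|U|\leq 2$. The cleanest route I would try first is a grounded-Laplacian argument as in Lemma~\ref{lem:boundary}, applied to $\Gc$ with $S=\{a,b\}$ and $k = |U|$ zero-grounded vertices, combined with the degree bound $d(b)\leq 9$. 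If that yields the inequality only for $k\leq 2$, then the assumption $|U|\geq 3$ must be excluded via the Courant--Fischer test vectors above. I expect balancing these two tools in the $|U|\geq 3$ case to be the hard step.
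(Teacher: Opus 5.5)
\textbf{There is a genuine gap, and it lies in the main route, not just in the details.} Your plan is to contradict $\lambda_3(G)\ge 2$ with two test vectors. However, the combinatorial hypotheses your argument uses, together with $a\not\sim b$, are compatible with $\lambda_3(G)$ being well above $2$.

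Here is a counterexample. Take a large dense graph $D$, say $K_m$ with the single edge $\{a,w\}$ deleted. Pick $c\in D\setminus\{a,w\}$, and add vertices $p,b$ with edges $a-p$, $p-b$, $b-c$. Then:
\begin{itemize}
\item $d(p)=2$;
\item $b$ has one neighbour outside $\{p,a\}$;
\item $w$ is a common nonneighbour of $a$ and $b$;
\item $a\not\sim b$.
\end{itemize}
Yet the low spectrum is that of the path $p-b$ grounded at both ends. So $\lambda_2(G)\approx 1$, $\lambda_3(G)\approx 3$, and all other eigenvalues are of order $m$.

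Your $\z_1$ only recovers the $\lambda_2$-direction, and no second vector exists. For example, $\e_w$ has Rayleigh quotient about $d(w)\approx m$, which is exactly the obstruction you flagged on $U$. The $|U|=1$ fallback of ``one vertex plus a compensating constant'' fails for the same reason.

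There is also a smaller problem. Once $\z_1$ carries a constant on $T\setminus U$, edges from $T\setminus U$ to $U$ make $\z_1^TL\z_2\neq 0$. Moreover, that constant is not $O(1/n)$ when $|U|$ is large. Putting $\one$ into the span fixes this, but it does not rescue the approach.

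The information must come from the complement. The paper's proof never uses $\lambda_3(G)\ge 2$. It uses only a $1$-eigenvector $\y$ of $\Gc$, the fact that $\y\perp\one$, and the fact that $p$ is not a cut vertex of $\Gc$.

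\textbf{Your fallback names the right objects but sets up the grounding backwards.} Vertices of $U$ are $\Gc$-adjacent to both $a$ and $b$, so they are doubly grounded, not zero-grounded. The paper's argument runs as follows, with $m=n-3$.
\begin{itemize}
\item It uses $K=L(\Gc[T])+\diag(\p+\w)$, where $\p,\w$ indicate $\Gc$-adjacency to $a$ and $b$.
\item The zero entries of $\p+\w$ are common $G$-neighbours of $a,b$ in $T$. There are at most seven, by the hypothesis on $b$, so Lemma~\ref{lem:grounded}(c) gives $\one^TK^{-1}\one\le m+196$.
\item The eigenvector equations on $T$ give $\y_T=y_pK^{-1}\one+y_b\one+\delta\z$, where $\z=K^{-1}\p$ satisfies $\0\le\z\le\one$ and $\delta=y_a-y_b$.
\item The equation at $p$ together with $\y\perp\one$ gives $y_a+y_b=-my_p$, and hence $|y_p|<2|\delta|/m$.
\item The equation at $a$ gives $(2g+1-2e)\delta=(2\one^T\z-m)y_p$, where $g=\w^T\z$ and $e$ indicates whether $a\sim b$. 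Hence $2g+1-2e<2$.
\end{itemize}

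The key idea your proposal is missing is the symmetric identity $g=\p^T(\one-\z)=\w^T\z$. It yields
$2g=\sum_{i\in T}\big(p_i(1-z_i)+w_iz_i\big)\ge|U|$,
since each vertex of $U$ contributes exactly $1$. Because $|U|\ge 1$, this forces $e=1$, and then $|U|\le 2$. Both conclusions follow at once, with no case split and no Courant--Fischer argument.
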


\begin{proof}
    Set $T := V(G) \setminus \{p,a,b\}$ and $m := |T| = n-3$, and let $e \in \{0,1\}$ indicate whether or not $a$ and $b$ are adjacent (a value of $1$ indicates adjacency, while a value of $0$ indicates non-adjacency). Let $\p,\w \in \{0,1\}^T$ indicate adjacency in $\Gc$ to $a$ and $b$, respectively (e.g., $p_x = 1$ if $x \in \overline{N}(a)$). As in the proof of Lemma~\ref{lem:boundary}, the matrix
    \begin{align*}
        K := L\big(\Gc[T]\big) + \diag(\p+\w)
    \end{align*}
    is a grounded Laplacian, since $p$ is not a cut vertex of $\Gc$. The vector $\w$ has at most seven zero entries, so the same is true of $\p+\w$. Lemma~\ref{lem:grounded}(a)--(c) tells us that $K$ is invertible, $K^{-1} \geq 0$ entrywise, and
    \begin{align*}
        \one^TK^{-1}\one \leq m+196.
    \end{align*}
    Define $\z := K^{-1}\p$ and $g := \w^T\z$. Since $K\one = \p+\w$, we have $K^{-1}\w = \one - \z$ and thus $\0 \leq \z \leq \one$ entrywise.

    Choose a $1$-eigenvector $\y$ of $\Gc$, and define $\delta := y_a-y_b$. The eigenvector equations at the vertices in $T$ give
    \begin{align*}
        K\y_T = y_p\one + y_a\p + y_b\w, \qquad \text{so} \qquad \y_T = y_pK^{-1}\one + y_b\one + \delta\z.
    \end{align*}
    The equation at $p$ gives $\one^T\y_T = (m-1)y_p$, and combining this with $\y \perp \one$ gives $y_a+y_b = -my_p$. Summing the preceding expression for $\y_T$ and substituting $y_b = (-my_p-\delta)/2$ therefore gives
    \begin{align*}
        \left(\frac{m^2}{2}+m-1-\one^TK^{-1}\one\right)y_p = \left(\one^T\z - \frac{m}{2}\right)\delta.
    \end{align*}
    Since $m \geq 29$, the coefficient of $y_p$ is at least $m^2/2-197 > m^2/4$, while $|\one^T\z-m/2| \leq m/2$. Also, $\delta \neq 0$, since otherwise these equations would give $y_p = y_a = y_b = 0$ and then $\y_T = \0$. It follows that $|y_p| < 2|\delta|/m$.

    The eigenvector equation at $a$ is
    \begin{align*}
        (\one^T\p-e)y_a = \p^T\y_T+(1-e)y_b.
    \end{align*}
    Substituting the expression for $\y_T$, using $\p^TK^{-1}\one = \one^T\z$ and $\p^T\z = \one^T\p-g$, and again substituting for $y_b$, gives
    \begin{align*}
        (2g+1-2e)\delta = (2\one^T\z-m)y_p.
    \end{align*}
    Since $|2\one^T\z-m| \leq m$ and $|y_p| < 2|\delta|/m$, we obtain $2g+1-2e < 2$.

    Finally, symmetry of $K^{-1}$ gives $g = \p^T(\one-\z) = \w^T\z$, so
    \begin{align*}
        2g = \sum_{i \in T}\big(p_i(1-z_i) + w_i z_i\big) \geq \big|\supp(\p) \cap \supp(\w)\big|.
    \end{align*}
    Indeed, every summand is nonnegative, and each vertex in both supports contributes $1$. By hypothesis, this intersection is nonempty, so $2g \geq 1$. The inequality $2g+1-2e < 2$ therefore forces $e = 1$, and then gives $\big|\supp(\p) \cap \supp(\w)\big| < 3$, which completes the proof.
\end{proof}

\begin{proof}[Proof of the degree-$2$ case of Lemma~\ref{lem:max_513_bound}]
    Choose vertices $p$ and $s$ of $G$ as in Lemma~\ref{lem:extremal_path} (applied to $\x/\|\x\|$) so that $x_p = \max(\x)$, and assume that $d(p) = 2$. Rescale $\x$ so that $x_p = \max(\x) = 1$, making it our objective to show that $\|\x_{+}\|^2 \geq 13/5$.

    Write $N(p) = \{a,b\}$. The eigenvector equation at $p$ gives $x_b+x_a = 1$. Since neither entry exceeds $1$, both must be nonnegative, so we can label the vertices so that $x_b = t$ and $x_a = 1-t$ for some $t \in [1/2, 1]$. Define $U := N(b) \setminus \{a,p\}$ and $k := |U|$. If $a \in N(b)$, then $x_a \leq t$, so the eigenvector equation at $b$ gives
    \begin{align}\label{ineq:deg2_proof1}
        \sum_{i \in U}x_i \geq kt-1 \geq \frac{k-2}{2}.
    \end{align}
    The same bound holds if $a \notin N(b)$, since then the first inequality is an equality.\smallskip

    \noindent\underline{\textbf{Case 1:} $k \geq 8$.} Omitting the nonpositive terms from the sum in Inequality~\eqref{ineq:deg2_proof1} and applying the Cauchy--Schwarz inequality gives
    \begin{align*}
        \|\x_{+}\|^2 \geq 1+t^2+(1-t)^2 + \frac{1}{k}\left(\sum_{\substack{i \in U\\x_i>0}}x_i\right)^2 \geq \frac{3}{2}+\frac{(k-2)^2}{4k} \geq \frac{13}{5},
    \end{align*}
    as desired.\smallskip

    \noindent\underline{\textbf{Case 2:} $k \leq 7$.} Since the distance from $p$ to $s$ is $3$, the vertex $s$ is nonadjacent to both $a$ and $b$. Lemma~\ref{lem:degree_two_schur} therefore tells us that $a$ and $b$ are adjacent to each other and have at most two common nonneighbors outside $\{p,a,b\}$. Define $r := d(b) = k+2 \leq 9$. If $r = 2$, deleting $a$ would separate $\{p,b\}$ from the remaining vertices, contradicting Lemma~\ref{lem:eigensimple_properties}(b), so $3 \leq r \leq 9$. Every nonneighbor of $a$ either belongs to $U$ or is one of the at most two common nonneighbors, so $\overline{d}(a) \leq (r-2)+2 = r$. It follows that $d(a) \geq n-r-1 \geq 31-r$.

    We now obtain a lower bound and an upper bound on $\|\x\|^2$. Define $W := N(a) \setminus \{p,b\}$ and $C := U \cap W$, and let $Z$ be the set of common nonneighbors of $a$ and $b$ outside $\{p,a,b\}$. Then $|C|+|Z| \leq (r-2)+2 = r$. The eigenvector equations at $a$ and $b$ give
    \begin{align}\label{eq:deg2_proof2}
        \sum_{i \in U}x_i = rt-2 \qquad \text{and} \qquad \sum_{i \in W}x_i = d(a)(1-t)-2.
    \end{align}
    Since $\x \perp \one$ and $x_p+x_a+x_b = 2$, the sum of $x_w$ for $w \in V(G) \setminus \{p,a,b\}$ equals $-2$. Adding the two sums~\eqref{eq:deg2_proof2} counts the entries on $C$ twice and those on $Z$ zero times, so
    \begin{align*}
        \sum_{i \in C}x_i - \sum_{i \in Z}x_i = rt + d(a)(1-t) - 2.
    \end{align*}
    The Cauchy--Schwarz inequality and $|C|+|Z| \leq r$ therefore give
    \begin{align*}
        \big(rt+d(a)(1-t)-2\big)^2 & \leq r\sum_{i \in C \cup Z}x_i^2 \leq r\big(\|\x\|^2-1-t^2-(1-t)^2\big).
    \end{align*}
    The expression being squared is positive. Using $d(a) \geq 31-r$, we obtain
    \begin{align}\label{ineq:def2_x_lb}
        \|\x\|^2 \geq 1+t^2+(1-t)^2+\frac{\big(r-2+(31-2r)(1-t)\big)^2}{r}.
    \end{align}

    For the upper bound, apply Lemma~\ref{lem:eigensimple_properties}(d) to $\x/\|\x\|$ and multiply the resulting positive semidefinite matrix on the right by $\e_b + (r-1)\e_p$ and on the left by the transpose of that vector. Since $d(p) = 2$, $d(b) = r$, and $p$ and $b$ are adjacent, this gives
    \begin{align}\label{ineq:def2_x_ub}
        0 \leq -r+\frac{2r^2}{n}+\frac{(r-1+t)^2}{\|\x\|^2}, \qquad \text{so} \qquad \|\x\|^2 \leq \frac{16(r-1+t)^2}{r(16-r)},
    \end{align}
    where we used $n \geq 32$ to get the second inequality. Inequalities~\eqref{ineq:def2_x_lb} and~\eqref{ineq:def2_x_ub} imply that $f_r(t) \leq 0$, where
    \begin{align*}
        f_r(t) := 1+t^2+(1-t)^2 + \frac{\big(r-2+(31-2r)(1-t)\big)^2}{r} - \frac{16(r-1+t)^2}{r(16-r)}.
    \end{align*}
    This function decreases on $[1/2,1]$. Indeed, $3 \leq r \leq 9$ gives
    \begin{align*}
        f_r^\prime(t) & = 4t-2-\frac{2(31-2r)\big(r-2+(31-2r)(1-t)\big)}{r} - \frac{32(r-1+t)}{r(16-r)} \leq 2-\frac{26}{3} < 0.
    \end{align*}
    Define $\tau_3 := 19/20 \geq 1/2$ and $\tau_r := (18-r)/16 \geq 1/2$ for $4 \leq r \leq 9$. Direct substitution shows that $f_r(\tau_r) > 0$ for all $3 \leq r \leq 9$, so $f_r(t) \leq 0$ forces $t > \tau_r$.

    If $r = 3$, then $U$ has just one vertex $c$, and $x_c = 3t-2 > 17/20$. Hence
    \begin{align*}
        \|\x_{+}\|^2 \geq 1+t^2+x_c^2 > 1+\left(\frac{19}{20}\right)^2+\left(\frac{17}{20}\right)^2 = \frac{21}{8} \geq \frac{13}{5},
    \end{align*}
    as desired. If $4 \leq r \leq 9$, then
    \begin{align*}
        \sum_{i \in U}x_i = rt - 2 > r\tau_r-2 = \frac{(r-2)(16-r)}{16} > 0.
    \end{align*}
    Omitting the nonpositive terms and applying the Cauchy--Schwarz inequality, using $|U| = r-2$, gives
    \begin{align*}
        \|\x_{+}\|^2 & \geq 1+t^2+(1-t)^2+\frac{1}{r-2}\left(\sum_{\substack{i \in U\\x_i>0}}x_i\right)^2 \geq \frac{3}{2} + \frac{(r-2)(16-r)^2}{256} \geq \frac{13}{5},
    \end{align*}
    as desired. Since both cases give the desired bound, this completes the proof of the degree-$2$ case, and thus (finally!) of Lemma~\ref{lem:max_513_bound} as a whole.
\end{proof}

\section{The eleven inequalities}\label{app:certificates}

We now prove Lemma~\ref{lem:eleven_inequalities}. We first bound $\kappa_*$ (from Definition~\ref{defn:kappa_func}) from below by polynomials of degree at most $2$ in each variable. Substituting these bounds into the definitions of $F_{\textup{A}}$, $F_{\textup{B}}$, and three of the functions $F_{i,j}$ from Definition~\ref{defn:exclusion_functions} gives five polynomials, each of which can be written as a quadratic form with a positive definite matrix. This proves five of the desired inequalities, and the other six then follow by symmetry and interpolation.

\subsection{Biquadratic lower bounds for \texorpdfstring{$\kappa_*$}{kappa}}

We seek polynomial lower bounds for $\kappa_*$ of degree at most $2$ in each variable. We start by giving names to some pieces that will appear in these lower bounds. For $\rho, \sigma, \tau \in [0,1]$ and $a,b,x,y \in \R$, define
\begin{align*}
    \eta_\tau(a) & := \frac{(32a-1)\tau}{1+30\tau^2}, \\
    \zeta_\tau(a,b) & := \big(a-\eta_\tau(a)b\big)^2 + \frac{\big((1-2a)b+\eta_\tau(a)\big)^2}{30}, \\
    k_{\sigma,\tau}(x,y) & := 1-4x(1-x)y(1-y)-\zeta_\tau(x,y)-\zeta_{1-\tau}(1-x,1-y) \\
    & \qquad \qquad \qquad \qquad \qquad \quad \, - \zeta_{1-\sigma}(y,1-x)-\zeta_\sigma(1-y,x).
\end{align*}
We also define $\widehat{k}_\rho$ to be the unique polynomial satisfying
\begin{align*}
    2(x-y)\widehat{k}_\rho(x,y) = k_{\rho,\rho}(x,y) - k_{\rho,\rho}(y,x).
\end{align*}
To see that this polynomial exists, we note that the polynomial $k_{\rho,\rho}(x,y) - k_{\rho,\rho}(y,x)$ equals $0$ when $x = y$, so $k_{\rho,\rho}(x,y) - k_{\rho,\rho}(y,x)$ has $(x-y)$ as a factor.

With this notation established, the following lemma provides the bounds that we will substitute into the functions $F_{\textup{A}}$, $F_{\textup{B}}$, and $F_{i,j}$.

\begin{lemma}\label{lem:kappa_polynomial_bounds}
    Suppose $\rho \in [0,1]$ and $\lambda > 0$. Then
    \begin{align*}
        K_{\textup{A}}(x,y) & := k_{1/3,1/2}(x,y) \quad \text{and} \\
        K_{\rho,\lambda}(x,y) & := \frac{k_{\rho,\rho}(x,y) + k_{\rho,\rho}(y,x)}{2} - \frac{\widehat{k}_{\rho}(x,y)^2}{4\lambda} - \lambda(x-y)^2
    \end{align*}
    are polynomials of degree at most $2$ in each variable, and
    \begin{align*}
        K_{\textup{A}}(x,y) \leq \kappa_{*}(x,y) \qquad \text{and} \qquad K_{\rho,\lambda}(x,y) \leq \kappa_{*}(x,y)
    \end{align*}
    for all $x,y \in [0,1/2]$.
\end{lemma}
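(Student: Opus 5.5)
The plan is to replace each rational term of $\kappa$ by a polynomial upper bound coming from a one-parameter Cauchy--Schwarz inequality, and then to handle the minimum in $\kappa_*$ separately for the two polynomials.

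\textbf{Step 1: the pointwise bound.} For real $A$, $v$, $\beta$, write $A = 1\cdot(A-\beta v) + (\sqrt{30}\,v)(\beta/\sqrt{30})$. The Cauchy--Schwarz inequality then gives
\begin{align*}
    \frac{A^2}{1+30v^2} \leq (A-\beta v)^2 + \frac{\beta^2}{30}.
\end{align*}
For the first rational term of $\kappa(x,y)$ take $A = x+(1-2x)y^2$, $v=y$ and $\beta = \eta_\tau(x)+(1-2x)y$. Then $A-\beta v = x-\eta_\tau(x)y$, so the right-hand side is exactly $\zeta_\tau(x,y)$. This works for every $\tau$, since $\eta_\tau$ is only a choice of multiplier.

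The other three rational terms are the first one evaluated at $(1-x,1-y)$, $(y,1-x)$ and $(1-y,x)$ respectively. Hence they are bounded by $\zeta_{1-\tau}(1-x,1-y)$, $\zeta_{1-\sigma}(y,1-x)$ and $\zeta_\sigma(1-y,x)$; the parameters attached to them are again arbitrary. This yields
\begin{align*}
    \kappa(x,y) \geq k_{\sigma,\tau}(x,y) \quad \text{for all } \sigma,\tau \in [0,1] \text{ and all } x,y.
\end{align*}
Since $\kappa_*(x,y)=\min\{\kappa(x,y),\kappa(y,x)\}$, applying the same bound with the roles of $x$ and $y$ swapped gives $\kappa(y,x) \geq k_{\sigma',\tau'}(y,x)$.

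\textbf{Degree count.} For fixed $\tau$, $\eta_\tau(a)$ is affine in $a$. Hence $\zeta_\tau(a,b)$ has degree at most $2$ in $a$ and at most $2$ in $b$. The same holds for $4x(1-x)y(1-y)$, so each $k_{\sigma,\tau}$ has degree at most $2$ in each variable.

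\textbf{Step 2: the bound for $K_{\rho,\lambda}$.} Set $a=k_{\rho,\rho}(x,y)$ and $b=k_{\rho,\rho}(y,x)$. By Step 1, $a \leq \kappa(x,y)$ and $b \leq \kappa(y,x)$, so
\begin{align*}
    \kappa_*(x,y) \geq \min\{a,b\} = \frac{a+b}{2}-\frac{|a-b|}{2}.
\end{align*}
By definition of $\widehat{k}_\rho$, we have $|a-b|/2 = |x-y|\,|\widehat{k}_\rho(x,y)|$. The AM--GM inequality gives
\begin{align*}
    |x-y|\,|\widehat{k}_\rho| \leq \frac{\widehat{k}_\rho^2}{4\lambda}+\lambda(x-y)^2,
\end{align*}
and substituting this proves $K_{\rho,\lambda} \leq \kappa_*$. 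For the degree claim, $(a+b)/2$ and $\lambda(x-y)^2$ clearly have degree at most $2$ in each variable. It remains to check that $\widehat{k}_\rho$ has degree at most $1$ in each variable, so that $\widehat{k}_\rho^2$ has degree at most $2$. I would do this by expanding the antisymmetric part of $k_{\rho,\rho}$ and factoring out $x-y$. Because the parameters are tied as $\sigma=\tau=\rho$, I expect the degree-$2$ pieces in each variable to cancel under this factoring, but it has to be verified explicitly.

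\textbf{Step 3: $K_{\textup{A}}$, the main obstacle.} Step 1 immediately gives $K_{\textup{A}}(x,y)=k_{1/3,1/2}(x,y) \leq \kappa(x,y)$. What remains is $K_{\textup{A}}(x,y) \leq \kappa(y,x)$ on $[0,1/2]^2$. By Lemma~\ref{lem:kappa_properties}, $\kappa(y,x)=\kappa(1-x,y)$. Step 1 with any parameters $(\sigma',\tau')$ then gives $\kappa(y,x) \geq k_{\sigma',\tau'}(1-x,y)$. I would first try to pick $(\sigma',\tau')$ so that this polynomial coincides with $k_{1/3,1/2}(x,y)$, by relabeling which $\zeta$ term plays which role. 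If no exact match exists, I would instead prove the polynomial inequality
\begin{align*}
    k_{\sigma',\tau'}(1-x,y)-k_{1/3,1/2}(x,y) \geq 0 \quad \text{on } [0,1/2]^2
\end{align*}
directly, for instance via a small sum-of-squares or interval certificate. This matching, rather than the routine Cauchy--Schwarz and AM--GM estimates, is the step I expect to need genuine computation.
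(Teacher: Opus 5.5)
Your Steps 1 and 2 are correct and follow the paper. The Cauchy--Schwarz split is the same, giving $k_{\sigma,\tau}\le\kappa$. Your $\min\{a,b\}=\tfrac{a+b}{2}-\tfrac{|a-b|}{2}$ plus AM--GM is equivalent to the paper's completed square $k_{\rho,\rho}-K_{\rho,\lambda}=\big(2\lambda(x-y)+\widehat{k}_\rho\big)^2/(4\lambda)$. The degree bound on $\widehat{k}_\rho$ needs no cancellation and has nothing to do with $\sigma=\tau$. Multiplying by $x-y$ raises the $x$-degree by exactly one, so $2(x-y)\widehat{k}_\rho$ having degree at most $2$ in $x$ forces $\widehat{k}_\rho$ to have degree at most $1$ in $x$, and likewise in $y$.

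The gap is Step~3. It is the only place the hypothesis $x,y\le 1/2$ enters, and you leave it as a plan without a proof.

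Your first option cannot succeed. Steps 1--2 hold on all of $[0,1]^2$, so an exact identity $K_{\textup{A}}(x,y)\equiv k_{\sigma',\tau'}(1-x,y)$ would give $K_{\textup{A}}\le\kappa_*$ on all of $[0,1]^2$. That fails for $x=1/3$ and $y$ slightly above $1/2$. At $(1/3,1/2)$ all four Cauchy--Schwarz steps are tight, so $K_{\textup{A}}(1/3,\cdot)$ is tangent to $\kappa(1/3,\cdot)$ at $y=1/2$, while $\partial_y\kappa(1/3,1/2)>0$. Hence $K_{\textup{A}}(1/3,y)>\kappa(1/3,1-y)=\kappa(y,1/3)$ there.

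What is missing is a concrete parameter choice plus a checkable nonnegativity certificate. The paper's choice is your $(\sigma',\tau')=(2/3,1/2)$. Since $k_{\sigma,\tau}(1-x,1-y)=k_{1-\sigma,1-\tau}(x,y)$, we get $k_{2/3,1/2}(1-x,y)=K_{\textup{A}}(x,1-y)$. Equivalently, write $\kappa(y,x)=\kappa(x,1-y)$ and apply Step~1 at $(x,1-y)$. One then computes
\[
K_{\textup{A}}(x,1-y)-K_{\textup{A}}(x,y)=\frac{160(1-2y)}{90307009}\big(2033982x^2-1614470x+325761\big).
\]
This is nonnegative on $[0,1/2]^2$, because $1-2y\ge 0$ and the quadratic in $x$ has negative discriminant. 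It gives $K_{\textup{A}}(x,y)\le K_{\textup{A}}(x,1-y)\le\kappa(x,1-y)=\kappa(y,x)$. Without this or an equivalent explicit verification, the $K_{\textup{A}}$ half of the lemma is unproved.
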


\begin{proof}
    We first obtain an upper bound for each of the four rational terms that are subtracted in the definition of $\kappa$ (from Definition~\ref{defn:kappa_func}). For $a,b \in \R$ and $\tau \in [0,1]$, we will apply the Cauchy--Schwarz inequality to the vectors
    \begin{align}\label{eq:quadratic_kappa_two_vecs}
        \left(a-b\eta_\tau(a), \frac{(1-2a)b + \eta_\tau(a)}{\sqrt{30}}\right) \qquad \text{and} \qquad \big(1,\sqrt{30}b\big).
    \end{align}
    Their inner product is $a+(1-2a)b^2$, so applying Cauchy--Schwarz and then dividing both sides by $1+30b^2$ gives
    \begin{align}\label{eq:quadratic_kappa_bound}
        \frac{\big(a+(1-2a)b^2\big)^2}{1+30b^2} \leq \big(a-b\eta_\tau(a)\big)^2 + \frac{\big((1-2a)b+\eta_\tau(a)\big)^2}{30} = \zeta_\tau(a,b).
    \end{align}
    Furthermore, if $\tau = b$ then the two vectors in~\eqref{eq:quadratic_kappa_two_vecs} are scalar multiples of each other, so equality holds in the Cauchy--Schwarz inequality and thus in Inequality~\eqref{eq:quadratic_kappa_bound}.

    Applying Inequality~\eqref{eq:quadratic_kappa_bound} to the four rational terms in the definition of $\kappa(x,y)$ gives
    \begin{align}\label{ineq:k_sigma_kappa}
        k_{\sigma,\tau}(x,y) \leq \kappa(x,y)
    \end{align}
    for all $x,y \in [0,1]$ and $\sigma,\tau \in [0,1]$. Since $\eta_\tau(a)$ has degree at most $1$ in $a$, the polynomial $\zeta_\tau(a,b)$ has degree at most $2$ in each of $a$ and $b$. Consequently, $k_{\sigma,\tau}$ and $K_{\textup{A}}$ also have degree at most $2$ in each variable, $\widehat{k}_{\rho}$ has degree at most $1$ in each variable, and so $K_{\rho,\lambda}$ has degree at most $2$ in each variable.

    To prove that $K_{\rho,\lambda}(x,y) \leq \kappa_{*}(x,y)$, it suffices (because $K_{\rho,\lambda}(x,y)$ is symmetric with respect to $x$ and $y$ and because of Inequality~\eqref{ineq:k_sigma_kappa}) to show that $K_{\rho,\lambda}(x,y) \leq k_{\rho,\rho}(x,y)$. Indeed,
    \begin{align*}
        k_{\rho,\rho}(x,y) - K_{\rho,\lambda}(x,y) & = (x-y)\widehat{k}_{\rho}(x,y)+\lambda(x-y)^2+\frac{\widehat{k}_{\rho}(x,y)^2}{4\lambda} \\
        & = \frac{\big(2\lambda(x-y) + \widehat{k}_{\rho}(x,y)\big)^2}{4\lambda} \geq 0.
    \end{align*}
    
    To prove that $K_{\textup{A}}(x,y) \leq \kappa_{*}(x,y)$, first notice that $K_{\textup{A}}(x,y) = k_{1/3,1/2}(x,y) \leq \kappa(x,y)$ follows immediately from Inequality~\eqref{ineq:k_sigma_kappa}. It thus suffices to show that $K_{\textup{A}}(x,y) \leq \kappa(y,x)$. To this end, we compute
    \begin{align*}
        K_{\textup{A}}(x,1-y) - K_{\textup{A}}(x,y) = \frac{160(1-2y)}{90307009}\big(2033982x^2 - 1614470x + 325761\big),
    \end{align*}
    which is nonnegative for all $x,y \in [0,1/2]$. In particular, this means that $K_{\textup{A}}(x,y) \leq K_{\textup{A}}(x,1-y)$ holds for $x,y \in [0,1/2]$. By then using Inequality~\eqref{ineq:k_sigma_kappa} and Lemma~\ref{lem:kappa_properties}, we see that
    \begin{align*}
        K_{\textup{A}}(x,y) & \leq K_{\textup{A}}(x,1-y) = k_{1/3,1/2}(x,1-y) \leq \kappa(x,1-y) = \kappa(y,x),
    \end{align*}
    which completes the proof.
\end{proof}

\subsection{Matrix representations of the biquadratic polynomials}\label{sec:matrix_rep}

We now use the Gram matrix method for sums of squares of polynomials (see, for example, \cite[Theorem~1 and Example~1]{PW98} and \cite[Section~3.2]{Par03}) to show that the biquadratic lower bounds arising from Lemma~\ref{lem:kappa_polynomial_bounds} on the functions $F_{\textup{A}}$, $F_{\textup{B}}$, and $F_{i,j}$ are all strictly positive on their domains, so $F_{\textup{A}}$, $F_{\textup{B}}$, and $F_{i,j}$ are all strictly positive on their domains as well, thus proving Lemma~\ref{lem:eleven_inequalities}.

Explicitly, we will plug the bounds of Lemma~\ref{lem:kappa_polynomial_bounds} into the functions from Definition~\ref{defn:exclusion_functions}. For $F_{\textup{B}}$ and each $F_{i,j}$ we have a bit of freedom in how we do this, since we can choose $\rho \in [0,1]$ and $\lambda \in (0,\infty)$ as we see fit. Making the choices specified in Table~\ref{tab:gram_parameters} gives us the inequalities
\begin{align*}
    F_{\textup{A}}(x,y) & \geq R\left(\frac{7}{43},x\right) R\left(\frac{4}{13},y\right) - R\left(\frac{7}{43},x\right) - R\left(\frac{4}{13},y\right) + K_{\textup{A}}(x,y), \\
    F_{\textup{B}}(x,y) & \geq \frac{25}{13}R\left(\frac{7}{43},x\right)R\left(\frac{5}{13},y\right) - 2R\left(\frac{5}{13},y\right) - \big(1+T_{*}(y)\big)R\left(\frac{7}{43},x\right) + K_{\tfrac{1}{3},\tfrac{1}{4}}(x,y), \\
    F_{3,3}(x,y) & \geq \frac{377}{196}R\left(\frac{5}{13},x\right)R\left(\frac{5}{13},y\right) - \big(1+T_3(x)\big)R\left(\frac{5}{13},y\right) \\
    & \qquad \qquad \qquad \qquad \qquad \qquad \ \, - \big(1+T_3(y)\big)R\left(\frac{5}{13},x\right) + K_{\tfrac{1}{3},\tfrac{3}{2}}(x,y),
\end{align*}
for example (and a similar lower bound on each of $F_{6,3}(x,y)$ and $F_{6,6}(x,y)$, which we omit for brevity). In each case, denote the biquadratic polynomial that serves as the lower bound by $\widetilde{F}$ with the same subscripts (e.g., $F_{\textup{A}}(x,y) \geq \widetilde{F}_{\textup{A}}(x,y)$ and $F_{6,3}(x,y) \geq \widetilde{F}_{6,3}(x,y)$). We will just need these five inequalities; the other six will follow via symmetry and an interpolation argument.

To prove that each $\widetilde{F}$ is positive, write
\begin{align*}
    \widetilde{F}(x,y)=\sum_{i,j=0}^2 c_{ij}x^iy^j.
\end{align*}
We express this polynomial as a quadratic form in the nonzero vector $(1,x,y,xy)$. For any $\theta \in \R$, the matrix
\begin{align}\label{eq:gram-matrix}
    M(\theta,\widetilde{F}) := \begin{bmatrix}
        c_{00} & c_{10}/2 & c_{01}/2 & \theta \\
        c_{10}/2 & c_{20} & c_{11}/2-\theta & c_{21}/2 \\
        c_{01}/2 & c_{11}/2-\theta & c_{02} & c_{12}/2 \\
        \theta & c_{21}/2 & c_{12}/2 & c_{22}
    \end{bmatrix}
\end{align}
satisfies
\begin{align*}
    \widetilde{F}(x,y) = [1,x,y,xy] M(\theta,\widetilde{F}) [1,x,y,xy]^T.
\end{align*}
(We note that $\theta$ can be chosen freely, and different choices lead to different matrices $M(\theta,\widetilde{F})$ that all correspond to the same biquadratic function $\widetilde{F}$.) If $M(\theta,\widetilde{F})$ is positive definite, then $\widetilde{F}(x,y) > 0$ for all real $x,y$, since $(1,x,y,xy)$ is never zero.

\begin{table}[htbp]
    \centering
    \begin{tabular}{@{}cccc@{}}\toprule
        Function & $\rho$ & $\lambda$ & $\theta$ \\\midrule
        $F_{\textup{A}}$ & -- & -- & $2$ \\
        $F_{\textup{B}}$ & $1/3$ & $1/4$ & $15/4$ \\
        $F_{3,3}$ & $1/3$ & $3/2$ & $8/3$ \\
        $F_{6,3}$ & $2/7$ & $1/2$ & $1$ \\
        $F_{6,6}$ & $2/9$ & $3$ & $3$ \\\bottomrule
    \end{tabular}
    \caption{The parameters used for the five polynomial lower bounds and their matrix representations. The function $F_{\textup{A}}$ uses $K_{\textup{A}}$, while the other functions use $K_{\rho,\lambda}$.}\label{tab:gram_parameters}
\end{table}

For each choice of subscript $\textup{s} \in \{\textup{A}, \textup{B}, (3,3), (6,3), (6,6)\}$ (i.e., each row of Table~\ref{tab:gram_parameters}), let $M_{\textup{s}}$ denote the matrix obtained from $\widetilde{F}_{\textup{s}}$ using the listed value of $\theta$. Then
\begin{align}\label{eq:five_explicit_identities}
    \widetilde{F}_{\textup{s}}(x,y) = [1,x,y,xy]M_{\textup{s}}[1,x,y,xy]^T.
\end{align}
These five matrices are displayed in Section~\ref{app:five-matrices}. They are all positive definite, which can be verified via exact integer arithmetic, e.g., via Sylvester's criterion.

\subsection{Proof of Lemma~\ref{lem:eleven_inequalities}}\label{sec:proof_of_lem_five}

\begin{proof}
    For each of the five matrices $M_{\textup{s}}$ from Section~\ref{sec:matrix_rep}, positive definiteness and Equation~\eqref{eq:five_explicit_identities} give $\widetilde{F}_{\textup{s}}(x,y) > 0$ for all real $x$ and $y$. As a result,
    \begin{align*}
        F_{\textup{s}}(x,y) \geq \widetilde{F}_{\textup{s}}(x,y) > 0,
    \end{align*}
    for all $x,y \in [0,1/2]$. This proves the desired inequalities for $F_{\textup{A}}$, $F_{\textup{B}}$, $F_{3,3}$, $F_{6,3}$, and $F_{6,6}$. Symmetry of $\kappa_{*}$ gives $F_{i,j}(x,y) = F_{j,i}(y,x)$, so $F_{3,6}(x,y) = F_{6,3}(y,x) > 0$ for $x,y \in [0,1/2]$ as well.
    
    All that's left is to prove the lemma in the case where at least one of $i$ or $j$ equals $4$. For $i \in \{3,4,6\}$, define
    \begin{align*}
        B_i(t) := 1+T_{i}(t)+\frac{10i}{(31-i)^2}R(\delta_i,t).
    \end{align*}
    Equation~\eqref{eq:Fij} then becomes
    \begin{align}\label{eq:Fi4_alt}
        F_{i,j}(x,y) & = 2R(\delta_i,x)R(\delta_j,y)-B_i(x)R(\delta_j,y)  -B_j(y)R(\delta_i,x)+\kappa_*(x,y).
    \end{align}
    Direct substitution then gives
    \begin{align}\label{eq:B_func_1}
        R(\delta_4,t) & = \frac{19}{32}R(\delta_3,t)+\frac{13}{32}R(\delta_6,t) \quad \text{and} \\\label{eq:B_func_2}
        \frac{19}{32}B_{3}(t) + \frac{13}{32}B_{6}(t) - B_{4}(t) & = \frac{5342775832t^2 + 2671847872t + 1204426337}{177176160000} > 0
    \end{align}
    for all $t \in [0,1/2]$.
    
    To see how this applies to $F_{4,j}$, substitute Equation~\eqref{eq:B_func_1} into Equation~\eqref{eq:Fi4_alt} (with $i = 4$) and subtract $\frac{19}{32}F_{3,j}(x,y) + \frac{13}{32}F_{6,j}(x,y)$. All terms except those involving $B_i(x)$ cancel, giving
    \begin{align*}
        F_{4,j}(x,y) - \frac{19}{32}F_{3,j}(x,y) - \frac{13}{32}F_{6,j}(x,y) = \left(\frac{19}{32}B_{3}(x) + \frac{13}{32}B_{6}(x)-B_{4}(x)\right)R(\delta_j,y) > 0,
    \end{align*}
    where the final inequality comes from Inequality~\eqref{eq:B_func_2} and Lemma~\ref{lem:R_lower_bound}. Taking $j=3$ and $j=6$ proves positivity of $F_{4,3}$ and $F_{4,6}$, respectively. Symmetry then gives positivity of $F_{3,4}$ and $F_{6,4}$. Finally, taking $j=4$ gives
    \begin{align*}
        F_{4,4}(x,y) & > \frac{19}{32}F_{3,4}(x,y) + \frac{13}{32}F_{6,4}(x,y) > 0,
    \end{align*}
    which completes the proof.
\end{proof}

\subsection{The five matrices}\label{app:five-matrices}

The five matrices used above are obtained from Equation~\eqref{eq:gram-matrix} and Table~\ref{tab:gram_parameters}. We display a positive integer multiple of each matrix so that all displayed entries are integers. The subscripts agree with those of the corresponding functions $F$.\\

\noindent $189644718900 M_{\textup{A}}$ equals
    \begingroup\small\setlength{\arraycolsep}{4pt}\renewcommand{\arraystretch}{1.4}
    \begin{align*}
        \begin{bmatrix}
            120815619554 & -177288644624 & -274818584614 & 379289437800 \\
            -177288644624 & 1201633392464 & 194685364328 & -2082765580384 \\
            -274818584614 & 194685364328 & 856323372884 & -1165705337069 \\
            379289437800 & -2082765580384 & -1165705337069 & 5926672958174
        \end{bmatrix},
    \end{align*}
    \endgroup
    
\noindent $82021275303240 M_{\textup{B}}$ equals
    \begingroup\small\setlength{\arraycolsep}{4pt}\renewcommand{\arraystretch}{1.4}
    \begin{align*}
        \begin{bmatrix}
            108543337024871 & -315934327543247 & -129335926228376 & 307579782387150 \\
            -315934327543247 & 1386900622345400 & 213749953654692 & -1598885428915904 \\
            -129335926228376 & 213749953654692 & 369160809063494 & -627429049555112 \\
            307579782387150 & -1598885428915904 & -627429049555112 & 3769871327008304
        \end{bmatrix},
    \end{align*}
    \endgroup
    
\noindent $88993083704015400 M_{3,3}$ equals
    \begingroup\small\setlength{\arraycolsep}{4pt}\renewcommand{\arraystretch}{1.4}
    \begin{align*}
        \begin{bmatrix}
            81875338267090038 & -162097320390263983 & -162097320390263983 & 237314889877374400 \\
            -162097320390263983 & 378530708880066346 & 314236460972467436 & -601124656192963096 \\
            -162097320390263983 & 314236460972467436 & 378530708880066346 & -601124656192963096 \\
            237314889877374400 & -601124656192963096 & -601124656192963096 & 1573493036620054912
        \end{bmatrix},
    \end{align*}
    \endgroup

\noindent $418402579567658571290625 M_{6,6}$ equals
    \begingroup\fontsize{7}{9}\selectfont\setlength{\arraycolsep}{4pt}\renewcommand{\arraystretch}{1.4}
    \begin{align*}
        \begin{bmatrix}
            288449407601816109352061 & -764974209472936160026239 & -764974209472936160026239 & 1255207738702975713871875 \\
            -764974209472936160026239 & 2381978052577480416584247 & 2109696371747539491103334 & -5019592216704879857120126 \\
            -764974209472936160026239 & 2109696371747539491103334 & 2381978052577480416584247 & -5019592216704879857120126 \\
            1255207738702975713871875 & -5019592216704879857120126 & -5019592216704879857120126 & 19146429681821423664848428
        \end{bmatrix},
    \end{align*}
    \endgroup
    
\noindent and $37875028906318831386629925000 M_{6,3}$ equals
    \begingroup\fontsize{6}{8}\selectfont\setlength{\arraycolsep}{1.5pt}\renewcommand{\arraystretch}{1.4}
    \begin{align*}
        \resizebox{\textwidth}{!}{$\begin{bmatrix}
            13413866039000635305591963011 & -38923171546473333832912962067 & -24702968754802720638497826136 & 37875028906318831386629925000 \\
            -38923171546473333832912962067 & 261308903953344935639820834746 & 9372174748871669370292787884 & -295517487976494247254438648496 \\
            -24702968754802720638497826136 & 9372174748871669370292787884 & 91706870319091148437468234772 & -86799175713086464743131515384 \\
            37875028906318831386629925000 & -295517487976494247254438648496 & -86799175713086464743131515384 & 794404124317032351775237026992
        \end{bmatrix}.$}
    \end{align*}
    \endgroup
\end{document}